\def\thickhline{%
  \noalign{\ifnum0=`}\fi\hrule \@height \thickarrayrulewidth \futurelet
   \reserved@a\@xthickhline}
\def\@xthickhline{\ifx\reserved@a\thickhline
               \vskip\doublerulesep
               \vskip-\thickarrayrulewidth
             \fi
      \ifnum0=`{\fi}}
\newlength{\thickarrayrulewidth}
\newcommand\Tstrut{\rule{0pt}{2.6ex}}         
\newcommand\Bstrut{\rule[-0.9ex]{0pt}{0pt}}   
\DeclareMathOperator{\Comm}{Comm}
\DeclareMathOperator{\Isom}{Isom}
\newcommand{\defn}[1]{\textit{#1}}
\newcommand{\QGr}{\mathbf{QGr}}
\newcommand{\Cl}{\mathrm{Cl}}
\newcommand{\gen}{\mathrm{gen}}
\newcommand{\bone}{\mathbf{1}}
\renewcommand{\Vec}{\mathrm{Vec}}
\newcommand{\FI}{\mathbf{FI}}
\newcommand{\SVec}{\mathrm{SVec}}
\newcommand{\prop}[1]{{\rm (#1)}}
\newcommand{\propn}[2]{{\rm (#1\textsubscript{\rm #2})}}
\newcommand{\osp}{\mathfrak{osp}}
\newcommand{\pe}{\mathfrak{pe}}
\newcommand{\alg}{\mathrm{alg}}
\newcommand{\npol}{\mathrm{npol}}
\newcommand{\nalg}{\mathrm{nalg}}
\newcommand{\gl}{\mathfrak{gl}}
\begin{document}

\title[Equivariant primes ideals for infinite dimensional supergroups]{Equivariant primes ideals for \\ infinite dimensional supergroups}
\date{\today}

\author{Robert P. Laudone}
\address{Department of Mathematics, University of Michigan, Ann Arbor, MI}
\email{\href{mailto:laudone@umich.edu}{laudone@umich.edu}}
\urladdr{\url{https://robertplaudone.github.io/}}

\author{Andrew Snowden}
\address{Department of Mathematics, University of Michigan, Ann Arbor, MI}
\email{\href{mailto:asnowden@umich.edu}{asnowden@umich.edu}}
\urladdr{\url{http://www-personal.umich.edu/~asnowden/}}

\thanks{RL was supported by NSF grant DMS-2001992. AS was supported by NSF grant DMS-1453893.}

\maketitle

\begin{abstract}
Let $A$ be a commutative algebra equipped with an action of a group $G$. The so-called $G$-primes of $A$ are the equivariant analogs of prime ideals, and of central importance in equivariant commutative algebra. When $G$ is an infinite dimensional group, these ideals can be very subtle: for instance, distinct $G$-primes can have the same radical. In previous work, the second author showed that if $G=\GL_{\infty}$ and $A$ is a polynomial representation, then these pathologies disappear when $G$ is replaced with the supergroup $\GL_{\infty|\infty}$ and $A$ with a corresponding algebra; this leads to a geometric description of $G$-primes of $A$. In the present paper, we construct an abstract framework around this result, and apply the framework to prove analogous results for other (super)groups. We give some applications to the isomeric determinantal ideals (commonly known as ``queer determinantal ideals").
\end{abstract}

\tableofcontents

\section{Introduction}

\subsection{Background}

A \defn{$\GL$-algebra} is a commutative algebra equipped with an action of the infinite general linear group $\GL$ under which it forms a polynomial representation. Over the last decade, mathematicians have realized that these algebras are well-behaved and widely applicable: for example, modules over the simplest $\GL$-algebra $\Sym(\bQ^{\infty})$ are equivalent (via Schur--Weyl duality) to the $\FI$-modules of Church, Ellenberg, and Farb \cite{fimodule}; Sam and the second author \cite{infrank} used $\GL$-algebras to study the stable representation theory of classical groups; and Draisma \cite{draisma} proved a topological noetherianity result for these algebras which has been applied \cite{draisma-lason-leykin, stillman} to give new proofs of Stillman's conjecture.

A \defn{$\GL$-prime} of a $\GL$-algebra $A$ is a $\GL$-stable ideal $\fp$ of $A$ that such that $\fa \fb \subset \fp$ implies $\fa \subset \fp$ or $\fb \subset \fp$ for $\GL$-stable ideals $\fa$ and $\fb$. These ideals take the place of ordinary prime ideals in the equivariant theory, and are therefore of central importance. Any ordinary prime ideal that is $\GL$-stable is $\GL$-prime, but the converse is not true: for example, if $A$ is the even subalgebra of the exterior algebra on the standard representation then the zero ideal is $\GL$-prime. This example shows $\Spec(A)$ cannot ``see'' the $\GL$-primes of $A$. The second author solved this problem (in characteristic~0) in \cite{tcaprimes}, the antecedent of the present paper: if one regards $A$ as a polynomial functor then one can evaluate $A$ on the super vector space $\bQ^{\infty|\infty}$, and the spectrum of the resulting ring is rich enough to detect all $\GL$-primes. This provides a geometric basis for studying these ideals.

The purpose of this paper is twofold. First, we abstract and partially axiomatize the results from \cite{tcaprimes}. And second, we apply this framework to study equivariant primes for other infinite dimensional Lie (super)algebras.

\subsection{Abstract results}

The results of \cite{tcaprimes} compare the equivariant commutative algebra of a $\GL$-algebra $A$ to the ordinary commutative algebra of $A(\bQ^{\infty|\infty})$. In other words, if $\omega \colon \Rep^{\pol}(\GL) \to \SVec$ denotes the functor $\omega(M)=M(\bQ^{\infty|\infty})$, then these results compare the commutative algebra of $A$ and $\omega(A)$, regarded as algebra objects in the respective categories. This suggests the following general problem:

\begin{problem}
Let $\omega \colon \cC \to \cD$ be a tensor functor and let $A$ be a commutative algebra object of $\cC$. How do commutative algebraic properties of $A$ and $\omega(A)$ compare?
\end{problem}

For example, one may ask more specifically: how do the prime ideals of $A$ and $\omega(A)$ compare? (We note that ``prime ideal'' makes sense for commutative algebras in any tensor category. The notion of $\GL$-prime discussed above is the categorical notion of prime for an algebra in $\Rep^{\pol}(\GL)$.) In \S \ref{ss:mainprops}, we introduce two conditions \prop{A} and \prop{B} on $\omega$ that allow for comparison of certain commutative algebraic properties. In this language, the main results of \cite{tcaprimes} simply state that the functor $\Rep^{\pol}(\GL) \to \SVec$ satisfies \prop{A} and \prop{B}. We prove a number of abstract results about these properties, and give various criteria for establishing them. This streamlines the task of establishing these properties in concrete situations.

\subsection{Lie superalgebras}

We consider four infinite dimensional Lie superalgebras:
\begin{equation} \label{eq:lie}
\fgl_{\infty|\infty}, \quad \osp_{\infty|\infty}, \quad \pe_{\infty}, \quad \fq_{\infty}
\end{equation}
(general linear, orthosymplectic, periplectic, and isomeric\footnote{Commonly known as the ``queer superalgebra"}; see \S \ref{s:lie} for definitions). If $\fg$ is any one of these algebras, then there is a tensor category $\Rep^{\alg}(\fg)$ comprising the algebraic representations of $\fg$ (over a fixed field of characteristic~0). Our main theorem is the following:

\begin{theorem} \label{mainthm}
Let $\fg$ be one of the four Lie superalgebras \eqref{eq:lie}. Then the forgetful functor
\begin{displaymath}
\Rep^{\alg}(\fg) \to \SVec
\end{displaymath}
satisfies properties \prop{A} and \prop{B}.
\end{theorem}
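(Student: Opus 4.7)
The plan is to unify all four cases under the abstract criteria from \S\ref{ss:mainprops}, taking the $\fgl_{\infty|\infty}$ case (which should essentially be a reformulation of \cite{tcaprimes}) as the foundational input and then transferring \prop{A} and \prop{B} to the remaining three superalgebras via natural functors between representation categories.

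First I would handle $\fg = \fgl_{\infty|\infty}$. The category $\Rep^{\alg}(\fgl_{\infty|\infty})$ is built from the standard representation $\bQ^{\infty|\infty}$ and its dual, and its polynomial part can be identified with the category of polynomial functors evaluated on $\bQ^{\infty|\infty}$; under this identification, the forgetful functor to $\SVec$ corresponds to the evaluation functor studied in \cite{tcaprimes}. The main results of that paper state precisely that this evaluation functor satisfies \prop{A} and \prop{B}. Extending from the polynomial subcategory to the full algebraic category should be routine, given the abstract machinery of \S\ref{ss:mainprops}, since the two sides are related by a tensor equivalence (swapping a factor with its dual) that is compatible with the forgetful functor.

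For $\osp_{\infty|\infty}$, $\pe_\infty$, and $\fq_\infty$, the standard super vector space of each superalgebra sits naturally inside $\bQ^{\infty|\infty}$, and the superalgebra itself arises as a subalgebra of $\fgl_{\infty|\infty}$ preserving extra structure (an orthosymplectic form, a periplectic pairing, or an odd involution). This gives a restriction tensor functor $\Rep^{\alg}(\fgl_{\infty|\infty}) \to \Rep^{\alg}(\fg)$, and the forgetful functors to $\SVec$ fit into a commuting triangle. I would then invoke a transfer result from \S\ref{ss:mainprops}: roughly, if one side of a suitable factorization satisfies \prop{A} and \prop{B}, and the intermediate functor has appropriate exactness and essential-image properties, then so does the composite. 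This reduces each case mechanically to the already-established $\fgl$ case.

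The main obstacle I expect is the isomeric case $\fg = \fq_\infty$. Because irreducible $\fq$-representations split into type $M$ and type $Q$, reflecting an odd endomorphism on certain simples, the category $\Rep^{\alg}(\fq_\infty)$ is not equivalent to the category of modules over the image of any naive subalgebra inclusion into $\fgl_{\infty|\infty}$. The comparison with $\fgl$ must therefore incorporate a Clifford-type twist, and the transfer argument will need the abstract framework to be robust enough to handle such a twisted forgetful functor. The periplectic case $\pe_\infty$ is similarly delicate, since $\pe_\infty$ lacks an invariant bilinear form of classical type, so its representation category does not decompose as cleanly into self-dual pieces. Getting these two cases to fit cleanly into the axiomatic setup of \S\ref{ss:mainprops} is where I expect the real work of the proof to lie.
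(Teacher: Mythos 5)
Your overall strategy---reduce to the $\fgl$ case and transfer via tensor functors---is in the right spirit, but two of its central steps are wrong, and the place you expect difficulty is not where it actually is. First, the passage from $\Rep^{\pol}(\fgl)$ to $\Rep^{\alg}(\fgl)$ is \emph{not} routine and is not effected by any ``tensor equivalence swapping a factor with its dual'': these two categories are genuinely different (the algebraic category is strictly larger, contains $\bV_*$, and is not semisimple), and the paper's entire apparatus of \prop{Gen}, \prop{Stab}, and \prop{Rad} exists precisely to make this step. Concretely, one works with the twisted diagonal embedding $i\colon \fgl \to \fgl\times\fgl$, $i(X)=(X,-X^t)$, so that polynomial representations of $\fgl\times\fgl$ restrict to algebraic representations of $\fgl$; then one verifies \prop{Stab} via the standard Borel, and verifies \prop{Gen} by taking $R=\Sym(\bV_1\otimes\bV_2)$ with a specific maximal ideal $\fm$, citing \cite{sym2noeth} for the equivalence $\Mod_R^{\gen}\simeq\Rep^{\alg}(\fgl)$. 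Only then does Proposition~\ref{prop:Agen} (via Corollary~\ref{cor:StabA}) deliver \prop{A}, and separately one must establish \prop{UF} for $\Rep^{\alg}(\fgl)$ to obtain \prop{B} via Corollary~\ref{cor:UFB}. Your proposal contains no mechanism at all for \prop{B}: the abstract path to \prop{B} runs through the derivation identity of Proposition~\ref{prop:Lie-zerodiv} and property \prop{UF}, neither of which you invoke.

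Second, your embedding direction for $\osp$, $\pe$, and $\fq$ is backwards relative to the paper's main argument. You propose restricting from a large $\fgl(\bW)$ \emph{down} to $\fg$; but the transfer result the paper uses (Proposition~\ref{prop:good-sub}) requires the already-good category to be on the \emph{small} side of a Lie subalgebra inclusion. The paper instead embeds $\fgl=\fgl(\bV)$ \emph{into} each of $\fq$, $\osp$, $\pe$ (acting on $\bW=\bV\otimes\bC^{1|1}$, $\bV\oplus\bV_*$, or $\bV\oplus\bV_*[1]$ respectively), so the restriction functor goes $\Rep^{\alg}(\fg)\to\Rep^{\alg}(\fgl)$, and the only thing to check is \prop{Fin}, which is immediate from the fact that $\bW$ restricts to a finite-length $\fgl$-module. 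In this setup the type-M/type-Q subtleties you anticipate for $\fq$ simply never arise, and $\pe$ is no harder than $\osp$; the real content of the theorem is all in the $\Rep^{\alg}(\fgl)$ case (and in \prop{UF}), not in the last step. Your proposed direction does correspond to an alternative noted at the end of \S\ref{s:super}, but that route requires establishing \prop{Gen} separately for each of the three cases (with inputs from \cite{sym2noeth}, \cite{periplectic}, \cite{isomeric}), which is considerably more work than you estimate and is not ``mechanical.''
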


We note that this theorem is new even for $\fgl$: the above theorem treats \emph{algebraic} representations of $\fgl$, while \cite{tcaprimes} applies only to the smaller category of \emph{polynomial} representations. (The above theorem applies equally well to the category of polynomial representations of $\fq_{\infty}$.) The most important consequences of the theorem are spelled out below:

\begin{corollary}
Let $\fg$ be as above and let $A$ be a commutative algebra in $\Rep^{\alg}(\fg)$.
\begin{enumerate}
\item Let $\fa$ and $\fb$ be $\fg$-ideals. Then $\rad_{\fg}(\fa) \subset \rad_{\fg}(\fb)$ if and only if $\rad(\fa) \subset \rad(\fb)$.
\item Let $\fa$ be a $\fg$-ideal. Then $\rad_{\fg}(\fa)$ is $\fg$-prime if and only if $\rad(\fa)$ is prime.
\item The construction $\fp \mapsto \rad(\fp)$ defines a bijection between minimal $\fg$-primes of $A$ and minimal primes of $A$.
\end{enumerate}
Now suppose that $A$ is generated over a noetherian coefficient ring by a finite length $\fg$-subrepresentation. Then we also have:
\begin{enumerate}[resume]
\item The $\fg$-spectrum $\Spec_{\fg}(A)$ of $A$ is a noetherian topological space.
\end{enumerate}
\end{corollary}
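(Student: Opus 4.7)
The plan is to derive (a)--(d) as formal consequences of the two properties \prop{A} and \prop{B} supplied by Theorem~\ref{mainthm}, using the abstract machinery developed in \S\ref{ss:mainprops}. Concretely, my working hypothesis is that \prop{A} and \prop{B} together guarantee that the forgetful functor $\omega \colon \Rep^{\alg}(\fg) \to \SVec$ induces an inclusion-preserving and radical-preserving correspondence between $\fg$-stable ideals of $A$ and their images in $\omega(A)$: in particular $\omega(\rad_{\fg}(\fa)) = \rad(\omega(\fa))$, and $\omega$ reflects inclusions of $\fg$-stable ideals. Each part of the corollary should then be a one-line specialization of a general statement about such $\omega$.

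For (a), the equivalence of inclusions is immediate from the claimed formula $\omega(\rad_{\fg}(\fa)) = \rad(\omega(\fa))$ combined with reflectivity: applying $\omega$ turns $\rad_{\fg}(\fa) \subset \rad_{\fg}(\fb)$ into $\rad(\fa) \subset \rad(\fb)$, and faithfulness on $\fg$-stable ideals gives the converse. For (b), I would argue that a radical $\fg$-ideal $\fr$ is $\fg$-prime if and only if $\omega(\fr)$ is a prime ideal in $\omega(A)$: one direction follows because $\omega$ is a tensor functor and so sends ideal products to ideal products, while the other direction uses reflectivity of non-containments. Part (a) then identifies $\omega(\rad_{\fg}(\fa))$ with $\rad(\fa)$ and yields the stated biconditional.

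Part (c) combines (a) and (b). The map $\fp \mapsto \rad(\fp)$ is well-defined because $\fg$-primes are radical, and is injective by (a) (two $\fg$-primes with equal ordinary radical have equal $\fg$-radical, hence are equal). For surjectivity onto minimal primes, given a minimal prime $\fq$ of $A$, I would produce a $\fg$-prime by taking $\rad_{\fg}$ of the largest $\fg$-stable ideal of $A$ contained in $\fq$; part (b) identifies this as $\fg$-prime, and minimality on both sides is preserved by (a).

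For (d), the finite generation hypothesis makes $A$ itself a noetherian commutative ring in the ordinary sense, so $\Spec(A)$ is a noetherian topological space. The map of (c) should extend, by tracking non-minimal primes as well, to a continuous inclusion-preserving map from $\Spec_{\fg}(A)$ into (a quotient of) $\Spec(A)$, from which noetherianity of $\Spec_{\fg}(A)$ descends. The main obstacle I foresee is in (d): one must ensure the full $\fg$-spectrum, not merely the minimal layer, embeds nicely enough that a descending chain of $\fg$-closed subsets produces a descending chain of ordinary closed subsets; this likely requires a spectrum-level comparison lemma packaged from \prop{A} and \prop{B} in \S\ref{ss:mainprops}, together with the fact that the finitely many irreducible components of $\Spec(A)$ provide the ambient noetherian frame.
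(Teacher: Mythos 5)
Your plan to treat (a)--(d) as formal consequences of \prop{A} and \prop{B} is the right instinct, but the specific hypotheses you introduce are false, and part (d) rests on a misunderstanding of the finiteness hypothesis.

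The formula $\omega(\rad_{\fg}(\fa)) = \rad(\omega(\fa))$, which you posit as the engine of parts (a) and (b), does \emph{not} hold, even when \prop{A} holds. The paper only proves the containment $(\rad_{\fg}\fa)^{\omega} \subset \rad(\fa^{\omega})$ (Proposition~\ref{prop:fiber-rad}), and the inclusion is generally strict: the isomeric algebra $A=\Sym(U)$ is $\fg$-integral (Remark~\ref{rmk:Aintegral}), so $\rad_{\fg}(0)=0$, while $A$ contains odd variables squaring to zero, so $\rad(0^{\omega})\ne 0$. The reason the bijection in \S\ref{ss:mainprops} is given by $\fp\mapsto\rad(\fp^{\omega})$ rather than $\fp\mapsto\fp^{\omega}$ is precisely that these differ. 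The good news is that you don't need the false formula at all: once you unwind the definitions, property \prop{A} \emph{is} statement (a), and property \prop{B} \emph{is} statement (b), verbatim, since $\rad_{\cC}=\rad_{\fg}$ and $\rad_{\cD}=\rad$ for $\omega\colon\Rep^{\alg}(\fg)\to\SVec$. Your sketch of (c) then essentially reproduces the argument in the Proposition of \S\ref{ss:mainprops}, with $\lfloor\fq\rfloor$ playing the role of your ``largest $\fg$-stable ideal contained in $\fq$.''

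Part (d) has a more serious error. You assert that ``the finite generation hypothesis makes $A$ itself a noetherian commutative ring in the ordinary sense.'' This is false: a finite length object of $\Rep^{\alg}(\fg)$ (such as the standard representation $\bV$) is infinite dimensional as a vector space, so $A$ is typically a polynomial ring in infinitely many variables over the coefficient ring and is certainly not noetherian, and $\Spec(A)$ is not a noetherian topological space. There is no ambient noetherian frame at the level of ordinary commutative algebra to descend from. The paper's Proposition~\ref{prop:spec-noeth} (and its analogues for $\fq$, $\osp$, $\pe$) instead pass to $B=A/\rad(A)$, observe that $B$ carries an equivariant structure under the diagonal $\fgl_{\infty}$, and invoke the \emph{equivariant topological noetherianity} theorem of Eggermont--Snowden \cite{ES} (a variant of Draisma's theorem) to get the ascending chain condition on radical equivariant ideals of $B$; property \prop{A} then lifts this ACC back to $\fg$-radical ideals of $A$. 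Without an input of Draisma type, your approach to (d) cannot get off the ground.
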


Here $\rad_{\fg}(\fa)$ is the sum of all $\fg$-stable ideals $\fc$ such that $\fc^n \subset \fa$ for some $n$; see Definition~\ref{def:rad}. Also, $\Spec_{\fg}(A)$ is the set of all $\fg$-primes of $A$, equipped with the Zariski topology; see Definition~\ref{def:spec}.

In fact, it is possible to prove a more general version of Theorem~\ref{mainthm} with our methods. Let $\fg$ and $\fh$ be finite products of the algebras in \eqref{eq:lie} and let $\fh \to \fg$ be a homomorphism built out of the various standard homomorphisms between these algebras. Then the restriction functor $\Rep^{\alg}(\fg) \to \Rep^{\alg}(\fh)$ satisfies \prop{A} and \prop{B}.

The above results imply similar results for (non-super) Lie algebras. For example, consider the infinite orthogonal Lie algebra $\fo_{\infty}$, and let $\Rep^{\alg}(\fo_{\infty})$ be its category of algebraic representations (on ordinary vector spaces). The forgetful functor $\Rep^{\alg}(\fo_{\infty}) \to \Vec$ does \emph{not} satisfy \prop{A}; this can be seen using the aforementioned exterior algebra example. However, the category $\Rep^{\alg}(\fo_{\infty})$ is (essentially) equivalent to $\Rep^{\alg}(\osp_{\infty|\infty})$. Via this equivalence we obtain a functor $\Rep^{\alg}(\fo_{\infty}) \to \SVec$, and it follows from Theorem~\ref{mainthm} that it satisfies \prop{A} and \prop{B}. Thus the equivariant primes in an $\fo_{\infty}$-algebra $A$ can be understood geometrically after replacing $A$ with the corresponding superalgebra in $\Rep^{\alg}(\osp_{\infty|\infty})$.

\subsection{An application}

Let $V$ and $W$ be infinite dimensional isomeric vector spaces, let $U$ be their half tensor product, and let $A=\Sym(U)$ (see \S \ref{s:isomeric} for definitions). We regard $A$ as an algebra object in the category of polynomial representations of $\fq(V) \times \fq(W)$. The algebra $A$ was studied in \cite{isomeric}, where its ideal lattice was determined and a noetherian result established. We apply our theory to determine the equivariant spectrum of this algebra. We show that the isomeric analog of determinantal ideals are $(\fq(V) \times \fq(W))$-prime, and account for all such prime ideals.

\subsection{Outline}

In \S \ref{s:tencat}, we introduce elementary concepts of commutative algebra in tensor categories, and in \S \ref{s:fiber} we discuss the basic ways in which these concepts interact with tensor functors. In \S \ref{s:axioms}, we formulate the properties \prop{A} and \prop{B} and prove various abstract results about them. In \S \ref{s:lie}, we study commutative algebras equipped with a Lie algebra action in general tensor categories, and give criteria for \prop{A} and \prop{B}. In \S \ref{s:super}, we apply the abstract results to prove our main results on Lie superalgebras. Finally, in \S \ref{s:isomeric}, we carry out our application to the isomeric algebra $A$.

\subsection{Index of Terms}

The following table lists the most important properties defined in the body of the article:
{\small
\begin{table}[htb]
\centering
\begin{tabular}{| lc | lc | lc |}
\hline
 Property & Section Number &Property & Section Number &Property &Section Number \Tstrut \Bstrut \\
\hline
\prop{PI} &\ref{PI} &\prop{B} &\ref{B} &\prop{Gen} &\ref{Gen} \Tstrut \Bstrut \\
\hline
\prop{For} &\ref{For} &\prop{Fin} &\ref{Fin} &\prop{Stab} &\ref{prop: Stab} \Tstrut \Bstrut\\
\hline
\prop{A} &\ref{A} &\prop{Rad} &\ref{prop: Rad} &\prop{UF} &\ref{UF}  \Tstrut \Bstrut \\
\hline
\end{tabular}
\label{table:PropIndex}
\end{table}
}

\section{Commutative algebra in tensor categories} \label{s:tencat}

\subsection{Basic definitions}

In this section, we discuss a few aspects of commutative algebras in tensor categories. We begin by clarifying our notion of tensor category:

\begin{definition} \label{def:tensor-cat}
A \defn{tensor category} is a symmetric monoidal category $(\cC, \otimes)$ such that $\cC$ is a Grothendieck abelian category and $\otimes$ is cocontinuous in each variable. (Thus $\otimes$ is right exact and commutes with all direct sums in each variable.)
\end{definition}

Recall that an object $M$ of an abelian category is \defn{finitely generated} if the following condition holds: given a family $\{N_i\}_{i \in I}$ of subobjects of $M$ such that $M=\sum_{i \in I} N_i$, there exists a finite subset $J$ of $I$ such that $M=\sum_{i \in J} N_i$. This definition coincides with the usual notion of finite generation in most cases, e.g., if $\cC$ is the category of modules over a ring. A general tensor category may not have enough finitely generated objects, and the tensor product may not interact nicely with finite generation. We therefore introduce the following refined notion:

\begin{definition}
A tensor category $\cC$ is \defn{admissible} if it satisfies the following:
\begin{itemize}
\item The unit object $\bone$ is finitely generated.
\item Every object of $\cC$ is the sum of its finitely generated subobjects.
\item The tensor product of two finitely generated objects is finitely generated. \qedhere
\end{itemize}
\end{definition}

Fix an admissible tensor category $\cC$. For an object $M$ of $\cC$, we let $[M]$ or $[M]_{\cC}$ denote the set of all subobjects of $M$. (Note: this is a set since Grothendieck abelian categories are well-powered.) We let $[M]^{\rf}$ or $[M]^{\rf}_{\cC}$ denote the set of all finitely generated subobjects of $M$. We use $[M]$ or $[M]^{\rf}$ as a replacement for the set of elements of $M$. Note that if $K$ and $N$ are subobjects of $M$ then $K \subset N$ holds if and only if $X \in [K]^{\rf}$ implies $X \in [N]^{\rf}$ (admissibility is crucial here since we are taking $X$ finitely generated).

We let $\Comm(\cC)$ be the category of commutative (and associative and unital) algebras in $\cC$. For $A \in \Comm(\cC)$, we let $\Mod_A$ be the category of $A$-modules in $\cC$. An \defn{ideal} of $A$ is an $A$-submodule of $A$. Let $M$ be an $A$-module, let $X \in [A]$, and let $Y \in [M]$. We define $XY \in [M]$ to be the image of the map
\begin{displaymath}
X \otimes Y \to A \otimes M \to M,
\end{displaymath}
where the first map is the tensor product of the inclusions $X \to A$ and $Y \to M$, and the second map is the given map for $M$.

\begin{proposition} \label{prop:multiplication}
Let $A \in \Comm(\cC)$ and let $M$ be an $A$-module.
\begin{enumerate}
\item Let $\{X_i\}_{i \in I}$ be elements of $[A]$ and let $\{Y_j\}_{j \in J}$ be elements of $[M]$. Then we have $(\sum_{i \in I} X_i)(\sum_{j \in J} Y_j)=\sum_{i \in I, j \in J} X_i Y_j$.
\item Let $X,Y \in [A]$ and $Z \in [M]$. Then $XY=YX$ and $(XY)Z=X(YZ)$.
\item If $X \in [A]^{\rf}$ and $Y \in [M]^{\rf}$ then $XY \in [M]^{\rf}$.
\item Suppose $X \subset X'$ belong to $[A]$ and $Y \subset Y'$ belong to $[M]$. Then $XY \subset X'Y'$.
\item Let $N$ be a $\cC$-subobject of $M$. Then $N$ is an $A$-submodule if and only if $AN \subset N$.
\item $M$ is finitely generated as an $A$-module if and only if there exists $X \in [M]^{\rf}$ such that $M=AX$.
\item Let $\bone$ be the unit object of $\cC$, let $\bone_A$ be the image of the natural map $\bone \to A$, and let $Y \in [M]$. Then $\bone_A \cdot Y=Y$.
\end{enumerate}
\end{proposition}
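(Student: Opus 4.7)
My plan is to verify the seven parts by chasing images in the Grothendieck abelian category $\cC$, relying on three ingredients: cocontinuity of $\otimes$ in each variable, the admissibility hypothesis on $\cC$, and the AB5 axiom. The proof should be ordered so that (7) is established before (6), since the proof of (6) secretly uses the identity $AN = N$ for an $A$-submodule $N$, which in turn follows by combining (4), (5), and (7).

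Parts (1)--(5) and (7) reduce to short diagram chases. For (1), I would write $\sum_i X_i$ as the image of $\bigoplus_i X_i \to A$ and $\sum_j Y_j$ as the image of $\bigoplus_j Y_j \to M$; by cocontinuity of $\otimes$, the image of $(\sum_i X_i) \otimes (\sum_j Y_j) \to M$ agrees with the image of $\bigoplus_{i,j} X_i \otimes Y_j \to M$, which is $\sum_{i,j} X_i Y_j$. Commutativity and associativity in (2) come from the commutativity of $A$ together with the symmetry of $\otimes$, and from the associativity of the $A$-action; (3) is immediate, since $X \otimes Y$ is finitely generated by admissibility and $XY$ is a quotient of it; (4) is naturality of images; (5) amounts to noting that $AN$ is by definition the image of the restricted action $A \otimes N \to M$, and $N$ is a submodule precisely when this image lies in $N$. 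For (7), the unit axiom for $M$ states that $\bone \otimes Y \to A \otimes Y \to M$ equals the unit constraint $\bone \otimes Y \cong Y$ followed by inclusion, so its image is $Y$; since this composition factors through $\bone_A \otimes Y$ via the surjection $\bone \twoheadrightarrow \bone_A$ (using right exactness of $\otimes$), the image of $\bone_A \otimes Y \to M$ is also $Y$.

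The substantive step is (6). For the forward direction, I would assume $M = AX$ with $X \in [M]^{\rf}$ and let $M = \sum_i N_i$ be a sum of $A$-submodules; AB5 gives $X = X \cap M = \sum_i (X \cap N_i)$, and finite generation of $X$ as a $\cC$-object extracts a finite subfamily with $X \subset N_{i_1} + \cdots + N_{i_k}$. Multiplying by $A$ and invoking $A N_{i_j} = N_{i_j}$ then yields $M = AX \subset N_{i_1} + \cdots + N_{i_k}$. For the converse, admissibility writes $M = \sum_{X' \in [M]^{\rf}} X'$, so (1) combined with $AM = M$ (from (7) with $Y = M$) gives $M = \sum_{X' \in [M]^{\rf}} AX'$; since each $AX'$ is an $A$-submodule, the $A$-module finite generation hypothesis extracts $X_1', \ldots, X_n'$ with $M = A(X_1' + \cdots + X_n')$, and the sum lies in $[M]^{\rf}$.

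The only real obstacle is the bookkeeping around two distinct notions of finite generation in part (6) — as a $\cC$-object versus as an $A$-module — and the need to interleave parts (5), (7), and (6) in the correct order so that the identity $AN = N$ is available when invoked. Beyond that, everything reduces to cocontinuity and routine image manipulation.
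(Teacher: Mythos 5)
The paper states the proof as ``We leave this to the reader,'' so there is no model argument to compare against; your job is just to supply the routine verification, and your sketch does this correctly. The key observations you make are all sound: (1) follows from cocontinuity of $\otimes$ (writing $\sum_i X_i$ as the image of $\bigoplus_i X_i \to A$, then using that $\otimes$ commutes with direct sums and is right exact); (3) uses the admissibility axiom that $X \otimes Y$ is finitely generated together with the fact that images of finitely generated objects are finitely generated in a Grothendieck category; (7) correctly exploits right exactness of $\otimes$ to replace $\bone_A \otimes Y$ by $\bone \otimes Y$ and then invokes the unit axiom; and for (6) you correctly use AB5 (available since $\cC$ is Grothendieck), applied to the filtered family of finite sums of the $N_i$'s, to descend from $\cC$-finite generation of $X$ to a finite subfamily, and in the converse direction you correctly note that $M = AM$ (via (5) and (7)) is needed to pass from $M = \sum_{X' \in [M]^{\rf}} X'$ to $M = \sum AX'$. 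Your observation that (7) should logically precede (6) in the write-up is a good one. The only cosmetic point is that in the forward direction of (6) you do not strictly need the equality $AN_{i_j} = N_{i_j}$; the containment $AN_{i_j} \subset N_{i_j}$ (which is just the definition of $A$-submodule, i.e.\ part (5)) already gives $M = AX \subset A(N_{i_1} + \cdots + N_{i_k}) \subset N_{i_1} + \cdots + N_{i_k}$. This doesn't affect correctness — it just means (7) is only essential for the converse implication.
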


\begin{proof}
We leave this to the reader.
%
\end{proof}

\begin{remark} \label{rmk:fgA}
Since $\cC$ is admissible, $\bone$ is finitely generated and so $\bone_A$ is also finitely generated. Since $A=A \cdot \bone_A$ by (b) and (g), we see from (f) that $A$ is finitely generated as an $A$-module. Without the admissibility condition, this need not be true!
\end{remark}

\subsection{Prime ideals}

Fix an admissible tensor category $\cC$ and $A \in \Comm(\cC)$.

\begin{definition}
Let $\fp$ be an ideal of $A$. We say that $\fp$ is \defn{prime} if $XY \subset \fp$ implies $X \subset \fp$ or $Y \subset \fp$ for all $X,Y \in [A]$. We say that $A$ is \defn{integral} or a \defn{domain} if the zero ideal is prime.
\end{definition}

\begin{proposition}
Let $\fp$ be an ideal of $A$. The following are equivalent:
\begin{enumerate}
\item $\fp$ is prime.
\item $XY \subset \fp$ implies $X \subset \fp$ or $Y \subset \fp$ for all $X,Y \in [A]^{\rf}$.
\item $\fa \fb \subset \fp$ implies $\fa \subset \fp$ or $\fb \subset \fp$ for all ideals $\fa$ and $\fb$ of $A$.
\item $\fa \fb \subset \fp$ implies $\fa \subset \fp$ or $\fb \subset \fp$ for all finitely generated ideals $\fa$ and $\fb$ of $A$.
\end{enumerate}
\end{proposition}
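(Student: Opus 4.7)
The plan is to establish the cycle $(a) \Rightarrow (c) \Rightarrow (d) \Rightarrow (b) \Rightarrow (a)$. Two of these are essentially formal: $(a) \Rightarrow (c)$ specializes the quantifier in $(a)$ from all $X, Y \in [A]$ to those subobjects that happen to be ideals, while $(c) \Rightarrow (d)$ further restricts to finitely generated ideals. The substantive work lies in the remaining two implications.

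For $(d) \Rightarrow (b)$, I would promote arbitrary finitely generated subobjects to the ideals they generate. Given $X, Y \in [A]^{\rf}$ with $XY \subset \fp$, set $\fa = AX$ and $\fb = AY$. These are finitely generated ideals by Proposition~\ref{prop:multiplication}(f), since $X \subset AX$ (applying part (g) with $\bone_A$). Commutativity and associativity from part (b), combined with the identity $A \cdot A = A$, give $\fa \fb = (AX)(AY) = A(XY)$, which lies in $\fp$ because $\fp$ is an ideal containing $XY$. Hypothesis $(d)$ then yields $\fa \subset \fp$ or $\fb \subset \fp$, and the inclusions $X \subset \fa$, $Y \subset \fb$ complete the argument.

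The implication $(b) \Rightarrow (a)$ is where admissibility enters. Suppose $X, Y \in [A]$ with $XY \subset \fp$ and $X \not\subset \fp$. The finite-generation criterion for containment (recalled in the excerpt just before Proposition~\ref{prop:multiplication}) furnishes some $X_0 \in [X]^{\rf}$ with $X_0 \not\subset \fp$. For every $Y_0 \in [Y]^{\rf}$, monotonicity (part (d) of the proposition) gives $X_0 Y_0 \subset XY \subset \fp$, whence hypothesis $(b)$ forces $Y_0 \subset \fp$; since $Y$ is the sum of its finitely generated subobjects by admissibility, we conclude $Y \subset \fp$. The only mildly delicate point in the whole argument is verifying $(AX)(AY) = A(XY)$, which relies on commutativity to shuffle copies of $A$ past $X$ and $Y$ together with $AA = A$; the remainder is bookkeeping, and there is no genuine obstacle.
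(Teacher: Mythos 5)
Your proof is correct and complete. The paper simply says ``We leave this to the reader'' for this proposition, so there is no authorial argument to compare against; your cycle $(a)\Rightarrow(c)\Rightarrow(d)\Rightarrow(b)\Rightarrow(a)$, with $(d)\Rightarrow(b)$ via $\fa=AX$, $\fb=AY$ and the identity $(AX)(AY)=A(XY)$ (using $AA=A$, which follows from $\bone_A\subset A$), and $(b)\Rightarrow(a)$ via admissibility to reduce to finitely generated subobjects, is exactly the routine verification the authors had in mind.
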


\begin{proof}
We leave this to the reader.
\end{proof}

We say that a prime of $A$ is \defn{minimal} if it does not strictly contain another prime. These always exist:

\begin{proposition}
Let $\fp$ be a prime ideal of $A$. Then there exists a minimal prime ideal $\fq$ of $A$ contained in $\fp$.
\end{proposition}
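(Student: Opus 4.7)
The plan is to apply Zorn's lemma to the poset $\mathcal{S}$ of prime ideals of $A$ contained in $\fp$, ordered by reverse inclusion; a maximal element in this ordering is precisely a minimal prime contained in $\fp$. Since $\fp \in \mathcal{S}$, the poset is nonempty, so the entire content lies in verifying that every chain in $\mathcal{S}$ admits an upper bound.

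Given a chain $\{\fp_i\}_{i \in I}$ in $\mathcal{S}$ (totally ordered by inclusion), I would take $\fq = \bigcap_{i \in I} \fp_i$, computed in the Grothendieck abelian category $\cC$, and argue that $\fq$ is prime. Being an intersection of ideals, $\fq$ is an ideal, and obviously $\fq \subset \fp_i$ for each $i$, so $\fq \subset \fp$. By characterization (b) of the previous proposition, to check primality it suffices to consider $X, Y \in [A]^{\rf}$ with $XY \subset \fq$ and show that $X \subset \fq$ or $Y \subset \fq$. Suppose for contradiction that both fail; since the intersection $\fq$ in a Grothendieck abelian category is characterized by the property that a subobject lies in $\fq$ iff it lies in every $\fp_i$, there exist indices $i_0, i_1 \in I$ with $X \not\subset \fp_{i_0}$ and $Y \not\subset \fp_{i_1}$. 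By totality, one of $\fp_{i_0}, \fp_{i_1}$ is contained in the other, say $\fp_{i_0} \subset \fp_{i_1}$. Then $XY \subset \fq \subset \fp_{i_0}$, and primality of $\fp_{i_0}$ together with $X \not\subset \fp_{i_0}$ forces $Y \subset \fp_{i_0} \subset \fp_{i_1}$, contradicting $Y \not\subset \fp_{i_1}$.

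The only step that needs any scrutiny is the passage from $X \not\subset \fq$ to the existence of some $i_0$ with $X \not\subset \fp_{i_0}$; but this is essentially the definition of the intersection as the limit of the diagram of inclusions in $\cC$ and does not rely on admissibility or on $X$ being finitely generated. Everything else is the direct categorical translation of the classical Zorn's lemma argument for minimal primes, with tensor-categorical inclusion of subobjects replacing elementwise containment. Applying Zorn then produces a maximal element of $\mathcal{S}$, which is the desired minimal prime contained in $\fp$.
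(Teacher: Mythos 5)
Your proof is correct and takes essentially the same route as the paper: both form the intersection of a chain of primes contained in $\fp$, verify primality of the intersection by a descending-chain argument using primality of the individual members, and conclude with Zorn's lemma. The only cosmetic differences are that you phrase the primality verification by contradiction (picking two witnesses $i_0, i_1$ and comparing them by totality) rather than directly, and you restrict to $X, Y \in [A]^{\rf}$ via the earlier proposition's characterization, whereas the paper works with arbitrary $X, Y \in [A]$ — neither choice affects the substance.
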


\begin{proof}
Let $S$ be the set of all prime ideals of $A$ contained in $\fp$. This set is non-empty since it contains $\fp$. Let $\{\fq_i\}_{i \in I}$ be a descending chain in $S$, and put $\fq=\bigcap_{i \in I} \fq_i$. We claim that $\fq$ is prime. Suppose $XY \subset \fq$ for $X,Y \in [A]$; we show that $X \subset \fq$ or $Y \subset \fq$. If $X \subset \fq$ we are done; suppose this is not the case. Then there is some $i \in I$ such that $X_i \not\subset \fq_i$, and so $X \not\subset \fq_j$ for all $j \ge i$. Since $XY \subset \fq_j$, we must have $Y \subset \fq_j$ for all $j \ge i$. Thus $Y \subset \fq$, as claimed. Zorn's lemma now shows that $S$ has a minimal element, which completes the proof.
\end{proof}

\subsection{Radicals}

Let $\cC$ and $A \in \Comm(\cC)$ be as above.

\begin{definition} \label{def:rad}
Let $\fa$ be an ideal of $A$. The \defn{radical} of $\fa$, denoted $\rad(\fa)$, is the sum of all $X \in [A]$ such that $X^n \subset \fa$ for some $n$. The \defn{(nil)radical} of $A$, denoted $\rad(A)$ or $\rad_{\cC}(A)$, is the radical of the zero ideal.
\end{definition}

If $X^n \subset \fa$ then $(AX)^n \subset \fa$ by Proposition~\ref{prop:multiplication}(b). Thus $\rad(\fa)=\sum AX$, where the sum is taken over those $X \in [A]$ with $X^n \subset \fa$ for some $n$. Thus $\rad(\fa)$ is a sum of ideals, and is therefore itself an ideal. If $X \in [\rad(\fa)]$ then we cannot conclude that $X^n \subset \fa$ for some $n$; for example, $\fa$ need not contain a power of $\rad(\fa)$. However, the problem disappears when $X$ is finitely generated:

\begin{proposition} \label{prop:fg-nilp}
Let $\fa$ be an ideal of $A$, and let $X \in [\rad(\fa)]^{\rf}$. Then $X^n \subset \fa$ for some $n$.
\end{proposition}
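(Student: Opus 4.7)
The plan is to reduce the infinite sum defining $\rad(\fa)$ to a finite one using that $X$ is finitely generated, and then apply a multinomial-style expansion combined with the pigeonhole principle.

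First, I recall from the discussion after Definition~\ref{def:rad} that $\rad(\fa) = \sum_{i \in I} Z_i$ where each $Z_i$ is an ideal of $A$ satisfying $Z_i^{n_i} \subset \fa$ for some integer $n_i \ge 1$. For any finite subset $J \subset I$, set $W_J = \sum_{i \in J} Z_i$. The family $\{W_J\}$ is directed under inclusion, and its join is $\rad(\fa)$. Intersecting with $X$, and using that filtered colimits are exact in the Grothendieck abelian category $\cC$, I get
\begin{displaymath}
X = X \cap \rad(\fa) = \sum_{J} (X \cap W_J),
\end{displaymath}
where $J$ ranges over finite subsets of $I$.

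Now I invoke the hypothesis that $X$ is finitely generated: some finite collection of the $X \cap W_J$ already sums to $X$, and by directedness a single $X \cap W_{J^*}$ equals $X$. Renaming, I may assume $X \subset Z_1 + \cdots + Z_k$ with $Z_i^{n_i} \subset \fa$.

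Next I set $n = n_1 + \cdots + n_k$ and show $(Z_1 + \cdots + Z_k)^n \subset \fa$, which will give $X^n \subset \fa$ by Proposition~\ref{prop:multiplication}(d). By part (a) of that proposition, iteratively applied, $(Z_1 + \cdots + Z_k)^n$ is the sum over all length-$n$ sequences $(i_1, \dots, i_n) \in \{1,\dots,k\}^n$ of the products $Z_{i_1} \cdots Z_{i_n}$. In any such sequence, by pigeonhole, some index $i$ must appear at least $n_i$ times (otherwise the total length is at most $\sum_i (n_i - 1) = n - k < n$). Using commutativity and associativity (Proposition~\ref{prop:multiplication}(b)), the corresponding product factors through $Z_i^{n_i} \subset \fa$, and since $\fa$ is an ideal the entire product lies in $\fa$. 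Summing, $(Z_1 + \cdots + Z_k)^n \subset \fa$.

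The only genuinely non-formal input is the reduction to a finite subsum, which is where admissibility and the Grothendieck axiom AB5 really get used; once $X$ is trapped inside a finite sum of the $Z_i$, the multinomial/pigeonhole argument is routine and relies only on the formal manipulations of Proposition~\ref{prop:multiplication}. I expect the directedness argument to be the step most easily glossed over, so in writing it out I would make explicit the appeal to exactness of filtered colimits.
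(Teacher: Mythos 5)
Your proof is correct and follows essentially the same route as the paper: reduce to a finite subsum using finite generation of $X$, then apply pigeonhole to a power of that finite sum. The paper glosses over the AB5/directedness step that you spell out, and uses the slightly cruder exponent $n = k\cdot\#\cV$ (with $k$ a common exponent for the finitely many summands) rather than your $n = n_1+\cdots+n_k$; both are fine.
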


\begin{proof}
Let $\cU$ be the set of all $Y \in [A]$ such that $Y^n \subset \fa$ for some $n$. Then $\rad(\fa)=\sum_{Y \in \cU} Y$ by definition. Suppose $X \in [\rad(\fa)]^{\rf}$. Then $X \subset \sum_{Y \in \cU} Y$. Since $X$ is finitely generated, there is a finite subset $\cV$ of $\cU$ such that $X \subset \sum_{Y \in \cV} Y$. Let $k$ be such that $Y^k \subset \fa$ for all $Y \in \cV$, and let $n=k \cdot \# \cV$. Then it follows from Proposition~\ref{prop:multiplication}(a,b) that $X^n \subset \fa$.
\end{proof}

The usual relationship between radical and prime ideals holds in full generality:

\begin{proposition}
Let $\fa$ be an ideal of $A$. Then $\rad(\fa)$ is the intersection of the prime ideals containing $\fa$.
\end{proposition}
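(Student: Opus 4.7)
The plan is to prove both inclusions separately. For $\rad(\fa) \subset \bigcap_{\fp} \fp$, where $\fp$ ranges over primes containing $\fa$: by admissibility, every subobject of $A$ is the sum of its finitely generated subobjects, so by the ``Note'' after the definition of $[M]^{\rf}$, it suffices to fix an arbitrary $X \in [\rad(\fa)]^{\rf}$ and an arbitrary prime $\fp \supset \fa$, and show $X \subset \fp$. Proposition~\ref{prop:fg-nilp} supplies $n \geq 1$ with $X^n \subset \fa \subset \fp$, and then an easy induction on $n$ using the factorization $X^n = X \cdot X^{n-1}$ together with the definition of prime gives $X \subset \fp$.

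For the reverse inclusion I would argue contrapositively: given $Y \in [A]^{\rf}$ with $Y \not\subset \rad(\fa)$---equivalently $Y^n \not\subset \fa$ for every $n \geq 1$---I will produce a prime $\fp \supset \fa$ with $Y \not\subset \fp$. Following the classical blueprint, let $S$ be the poset of ideals $\fb \supset \fa$ with $Y^n \not\subset \fb$ for every $n \geq 1$, ordered by inclusion. It contains $\fa$. To apply Zorn's lemma I need chains to have upper bounds: given a chain $\{\fb_i\}$ in $S$, I set $\fb = \sum_i \fb_i$ and claim $\fb \in S$. If instead $Y^n \subset \fb$ for some $n$, then since $\cC$ is Grothendieck abelian, filtered colimits commute with finite intersections, so $Y^n = Y^n \cap \fb = \sum_i (Y^n \cap \fb_i)$. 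Since $Y^n \in [A]^{\rf}$ by Proposition~\ref{prop:multiplication}(c), this sum collapses to a finite one, and chainedness places $Y^n$ inside a single $\fb_i$, contradicting $\fb_i \in S$. Zorn thus produces a maximal element $\fp$.

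The final task is to verify that $\fp$ is prime. Suppose $Z, W \in [A]$ with $ZW \subset \fp$ but $Z, W \not\subset \fp$. Then $\fp + AZ$ and $\fp + AW$ are ideals strictly containing $\fp$, so by maximality both leave $S$: there exist $n, m$ with $Y^n \subset \fp + AZ$ and $Y^m \subset \fp + AW$. Expanding via Proposition~\ref{prop:multiplication}(a,b),
\[
Y^{n+m} \subset (\fp + AZ)(\fp + AW) \subset \fp + A \cdot ZW \subset \fp,
\]
where the cross terms land in $\fp$ because $\fp$ absorbs multiplication by $A$ and contains $ZW$. This contradicts $\fp \in S$, so $\fp$ is prime; by construction $\fa \subset \fp$ and $Y \not\subset \fp$, as required.

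The main obstacle will be the upper-bound step in Zorn: without ``elements'' of $Y^n$ to chase, one must lean on both the finite generation of $Y^n$ (admissibility plus Proposition~\ref{prop:multiplication}(c)) and the Grothendieck hypothesis on $\cC$ to interchange the intersection with a directed sum. Once that is in hand, the rest is a clean transcription of the classical argument into the language of $[A]$ and $[A]^{\rf}$ from \S\ref{s:tencat}.
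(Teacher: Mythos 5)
Your proof is correct and follows essentially the same route as the paper: reduce the forward inclusion to finitely generated $X$ via Proposition~\ref{prop:fg-nilp}, then run the Zorn's lemma construction of a prime avoiding a non-nilpotent finitely generated $Y$ for the reverse inclusion. The paper first passes to $A/\fa$ to simplify notation and is a bit more terse in the chain-upper-bound step (it simply cites finite generation where you spell out the interchange of intersection with directed sums), but these are cosmetic differences; the structure and key ideas are the same.
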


\begin{proof}
Passing to $A/\fa$, it suffices to show that $\rad(A)=\fq$, where $\fq$ is the intersection of all prime ideals of $A$. Let $X \in [A]$ satisfy $X^n=0$. Then for any prime $\fp$, we have $X^n \subset \fp$ and so $X \subset \fp$. Since $\rad(\fa)$ is the sum of such $X$, it follows that $\rad(\fa) \subset \fp$. Hence $\rad(\fa) \subset \fq$.

Now let $X \in [A]^{\rf}$ satisfy $X^n \ne 0$ for all $n$. We construct a prime $\fp$ such that $X \not\subset \fp$. Let $S$ be the set of all ideals $\fa$ such that $X^n \not\subset \fa$ for all $n$. Then $S$ is non-empty since it contains the zero ideal. Let $\{\fa_i\}_{i\in I}$ be an ascending chain in $S$ and let $\fa=\sum_{i \in I} \fa_i$ be the sum. Then $\fa$ belongs to $S$: indeed, if $X^n \subset \fa$ then we would have $X^n \subset \fa_i$ for some $i$ since $X^n$ is finitely generated, which is not the case. By Zorn's lemma, $S$ has a maximal element $\fp$. We claim $\fp$ is prime. Suppose $\fa \fb \subset \fp$ for ideals $\fa$ and $\fb$, and suppose $\fa \not\subset \fp$ and $\fb \not\subset \fp$. Then $\fp+\fa$ and $\fp+\fb$ strictly contain $\fp$, and therefore do not belong to $S$. Thus $X^n \subset \fp+\fa$ and $X^m \subset \fp+\fb$ for some $n$ and $m$, and so $X^{n+m} \subset (\fp+\fa)(\fb+\fb) \subset \fp$, a contradiction. Thus $\fa \subset \fp$ or $\fb \subset \fp$, and so $\fp$ is prime.

The result now follows. Indeed, let $X \in [\fq]^{\rf}$. Since $X$ is contained in all primes, it follows from the previous paragraph that $X^n=0$ for some $n$. Thus $X \subset \rad(\fa)$. Since this holds for all $X \in [\fq]^{\rf}$, it follows that $\fq \subset \rad(\fa)$.
\end{proof}

\subsection{Spectrum}

Let $\cC$ and $A \in \Comm(\cC)$ be as above.

\begin{definition} \label{def:spec}
The \defn{spectrum} of $A$, denoted $\Spec(A)$, is the set of all prime ideals of $A$.
\end{definition}

For an ideal $\fa$ of $A$, we let $V(\fa) \subset \Spec(A)$ be the set of prime ideals containing $\fa$. These sets have the usual properties:

\begin{proposition}
We have the following:
\begin{enumerate}
\item $V(\sum_{i \in I} \fa_i)=\bigcap_{i \in I} V(\fa_i)$.
\item $V(\fa \fb)=V(\fa \cap \fb)=V(\fa) \cup V(\fb)$.
\item $V(\fa) \subset V(\fb)$ if and only if $\rad(\fb) \subset \rad(\fa)$.
\end{enumerate}
\end{proposition}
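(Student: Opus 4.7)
The plan is to prove each part directly from the definition $V(\fa) = \{\fp \in \Spec(A) : \fa \subset \fp\}$, invoking the previous proposition (which identifies $\rad(\fa)$ with the intersection of the primes containing $\fa$) only in part (c). Throughout, I will use the inclusion-reversing nature of $V$: if $\fa \subset \fb$ then $V(\fb) \subset V(\fa)$.

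For (a), the key observation is purely formal: a subobject $\fp$ of $A$ contains the sum $\sum_{i \in I} \fa_i$ if and only if it contains each summand $\fa_i$. No use of primality is needed. Consequently $\fp \in V(\sum_i \fa_i)$ iff $\fp \in V(\fa_i)$ for every $i$, which is exactly $\fp \in \bigcap_i V(\fa_i)$.

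For (b), I would first chain the inclusions $\fa \fb \subset \fa \cap \fb \subset \fa$ and $\fa \cap \fb \subset \fb$, using Proposition~\ref{prop:multiplication}(d) to see that $\fa \fb$ lands inside both $\fa$ and $\fb$ and hence in their intersection. Applying $V$ then gives the containment $V(\fa) \cup V(\fb) \subset V(\fa \cap \fb) \subset V(\fa \fb)$. For the reverse inclusion, suppose $\fp \in V(\fa \fb)$, so $\fa \fb \subset \fp$. The ideal-level form of primality (equivalence (c) of the preceding characterization of prime ideals) then gives $\fa \subset \fp$ or $\fb \subset \fp$, so $\fp \in V(\fa) \cup V(\fb)$. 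All three sets therefore coincide.

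For (c), I would invoke the identity $\rad(\fc) = \bigcap_{\fp \in V(\fc)} \fp$ from the previous proposition. If $\rad(\fb) \subset \rad(\fa)$ and $\fp \in V(\fa)$, then $\fp$ appears in the intersection defining $\rad(\fa)$, so $\fp \supset \rad(\fa) \supset \rad(\fb) \supset \fb$, whence $\fp \in V(\fb)$; this gives the ``if'' direction. Conversely, if $V(\fa) \subset V(\fb)$ then intersecting over the larger indexing set $V(\fb)$ yields a smaller intersection, so
\begin{displaymath}
\rad(\fb) = \bigcap_{\fp \in V(\fb)} \fp \subset \bigcap_{\fp \in V(\fa)} \fp = \rad(\fa).
\end{displaymath}

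I do not expect any genuine obstacle here: the three parts are formal consequences of the definitions and the preceding two propositions. The only point requiring slight care is that ``contains'' must be read as containment of subobjects of $A$, but the admissibility of $\cC$ and Proposition~\ref{prop:multiplication} ensure that sums, products, and intersections of ideals behave in this categorical sense exactly as in classical commutative algebra, so the usual Zariski-topology arguments transport without modification.
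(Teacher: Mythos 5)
The paper leaves this proposition to the reader, so there is no written proof to compare against; your argument is the expected one and is correct. The three parts follow formally from the definition of $V(\cdot)$, Proposition~\ref{prop:multiplication}(d) (to get $\fa\fb \subset \fa \cap \fb \subset \fa, \fb$), the ideal-theoretic characterization of prime ideals, and the identity $\rad(\fc) = \bigcap_{\fp \in V(\fc)} \fp$ from the preceding proposition, exactly as you use them.
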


\begin{proof}
We leave this to the reader.
\end{proof}

Thanks to the proposition, we can define a topology on $\Spec(A)$ by declaring a set to be closed if it is of the form $V(\fa)$ for some ideal $\fa$; we call this the \defn{Zariski topology}. One can show that $\Spec(A)$ is quasi-compact (this relies on the fact that $A$ is finitely generated as an $A$-module, see Remark~\ref{rmk:fgA}).

\subsection{Generic categories}

Let $\cC$ be an admissible tensor category and let $A \in \Comm(\cC)$ be a domain.

\begin{definition}
We say that an $A$-module $M$ is \defn{torsion} if for every $Y \in [M]^{\rf}$ there exists a non-zero $X \in [A]$ such that $XY=0$. We let $\Mod_A^{\tors}$ be the full subcategory of $\Mod_A$ spanned by torsion modules.
\end{definition}

Recall that a \defn{localizing subcategory} of a Grothendieck abelian category is a Serre subcategory closed under arbitrary direct sums.

\begin{proposition}
$\Mod_A^{\tors}$ is a localizing subcategory of $\Mod_A$.
\end{proposition}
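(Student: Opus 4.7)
The plan is to verify the four closure properties of a localizing subcategory: closure under subobjects, quotients, extensions, and arbitrary direct sums. Throughout, the key technical lemma I will rely on is that, in the Grothendieck abelian category $\cC$, a finitely generated subobject of a sum $\sum_{i \in I} N_i$ is contained in $\sum_{i \in J} N_i$ for some finite $J \subset I$; this follows because, for a directed family $\{N_i\}$ and a subobject $Y \subset \sum N_i$, one has $Y = \sum (Y \cap N_i)$ by exactness of filtered colimits, together with the definition of finite generation.

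For subobjects, the check is immediate: if $N \subset M$ and $Y \in [N]^{\rf}$, then $Y \in [M]^{\rf}$ as well, so the torsion hypothesis on $M$ supplies a non-zero $X \in [A]$ with $XY = 0$. For quotients $\pi \colon M \twoheadrightarrow N$, given $Y \in [N]^{\rf}$, I first write $M$ as the sum of its finitely generated subobjects (admissibility), hence $N = \sum_i \pi(M_i^{\rf})$, and then use the key lemma to find a finitely generated $Y' \in [M]^{\rf}$ with $\pi(Y') \supset Y$. Applying torsion of $M$ to $Y'$ gives a non-zero $X$ with $XY' = 0$, and since tensor is right exact, $XY \subset X\pi(Y') = \pi(XY') = 0$.

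For extensions $0 \to M' \to M \to M'' \to 0$ with $M', M''$ torsion, let $Y \in [M]^{\rf}$. The image $\pi(Y)$ in $M''$ is finitely generated, so there is a non-zero $X'' \in [A]$ with $X''\pi(Y) = 0$; this forces $X''Y \subset M'$. By Proposition~\ref{prop:multiplication}(c), $X''Y$ is finitely generated, so the torsion of $M'$ yields a non-zero $X' \in [A]$ with $X'(X''Y) = 0$. Setting $X = X'X''$, associativity gives $XY = 0$, and $X \neq 0$ because $A$ is a domain.

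For closure under direct sums $M = \bigoplus_{i \in I} M_i$ with each $M_i$ torsion, the key lemma shows that any $Y \in [M]^{\rf}$ lies in $\bigoplus_{i \in J} M_i$ for some finite $J \subset I$. Letting $Y_i$ denote the image of $Y$ under the projection $M \to M_i$ for $i \in J$, each $Y_i$ is a finitely generated subobject of $M_i$, so there exist non-zero $X_i \in [A]$ with $X_i Y_i = 0$. Setting $X = \prod_{i \in J} X_i$ (commutative product of finitely many non-zero ideals), the domain hypothesis applied inductively keeps $X$ non-zero, and Proposition~\ref{prop:multiplication}(a,b) gives $XY \subset X \cdot \sum_{i \in J} Y_i = \sum_{i \in J} X Y_i = 0$. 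The main technical step that ties everything together is the extraction of a finitely generated preimage/finite sub-sum from a finitely generated subobject, which is also the place where the admissibility hypothesis on $\cC$ is essential; the remaining algebraic manipulations are routine applications of Proposition~\ref{prop:multiplication}.
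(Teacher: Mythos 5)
Your proof is correct and follows essentially the same route as the paper: subobjects are trivial, quotients go via a finitely generated preimage, and extensions lift the annihilator from the quotient and then kill the resulting submodule of $M_1$, using the domain hypothesis to keep $X'X''$ non-zero. The only small variation is in the direct-sums step, where the paper first observes that torsion modules are closed under finite direct sums (as a consequence of the Serre-subcategory property just established) and then reduces to a finite sub-sum, whereas you instead annihilate each projection $Y_i$ separately and take the product $\prod_{i \in J} X_i$, again invoking integrality of $A$ to keep it non-zero; both are fine.
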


\begin{proof}
Suppose that $M$ is torsion. It is clear any submodule of $M$ is torsion. Let $f \colon M \to N$ be a surjection of $A$-modules and let $Y \in [N]^{\rf}$. Let $\{Z_i\}_{i \in I}$ be the finitely generated subobjects of $f^{-1}(N)$. Since $Y$ is finitely generated, some $Z_i$ surjects onto $Y$. Let $X \in [A]$ be non-zero such that $XZ_i=0$. Then $XY=0$ as well, and so $N$ is torsion.

Next, consider a short exact sequence
\begin{displaymath}
0 \to M_1 \to M_2 \to M_3 \to 0
\end{displaymath}
where $M_1$ and $M_3$ are torsion. Let $Y \in [M_2]^{\rf}$, and let $Y'$ be its image in $M_3$. Since $Y' \in [M_3]^{\rf}$, it follows that there exists $X \in [A]$ non-zero such that $XY'=0$; of course, we can assume that $X \in [A]^{\rf}$. Thus $XY \subset [M_1]^{\rf}$. Hence there exists $X' \in [A]$ non-zero such that $X'(XY)=0$. Thus $(XX')Y=0$ and $XX' \in [A]$ is non-zero since $A$ is a domain. This shows that $M_2$ is torsion.

We have thus shown that $\Mod_A^{\tors}$ is a Serre subcategory. In particular, it is closed under finite direct sums. Let $\{M_i\}_{i \in I}$ be an arbitrary family of torsion $A$-modules, let $M=\bigoplus_{i \in I} M_i$, and let $Y \in [M]^{\rf}$. Since $Y$ is finitely generated, we have $Y \subset \bigoplus_{i \in J} M_i$ for some finite subset $J$ of $I$. Since this finite direct sum is torsion, we have $XY=0$ for some $X \in [A]$ non-zero. Thus $M$ is torsion, which completes the proof.
\end{proof}

\begin{definition}
The \defn{generic category} of $A$, denoted $\Mod_A^{\gen}$, is the Serre quotient category $\Mod_A/\Mod_A^{\tors}$.
\end{definition}

Intuitively, $\Mod_A^{\gen}$ should be thought of as the module category of the fraction field of $A$; however, the ``fraction field'' of $A$ may not actually exist as an algebra object in $\cC$. It follows from the general theory of Serre quotients that $\Mod_A^{\gen}$ is a Grothendieck abelian category and that the quotient functor $\Mod_A \to \Mod_A^{\gen}$ is cocontinuous.

\section{Fiber functors} \label{s:fiber}

\subsection{Forgetful functors}

Let $\omega \colon \cC \to \cD$ be an additive functor of Grothendieck abelian categories. We consider the following conditions:
\begin{itemize}
\item[\prop{PI}\label{PI}] For every family $\{M_i\}_{i \in I}$ of objects of $\cC$, the natural map $\omega(\prod_{i \in I} M_i ) \to \prod_{i \in I} \omega(M_i)$ is injective.
\item[\prop{For}\label{For}] $\omega$ is exact, faithful, cocontinuous, and satisfies \prop{PI}.
\end{itemize}
A typical example of a functor satisfying \prop{For} is the forgetful functor from the category of graded vector spaces to the category of vector spaces; note that this functor does not commute with products, but does satisfy \prop{PI}.

Fix $\omega \colon \cC \to \cD$ satisfying \prop{For}. We then think of $\omega$ as a kind of forgetful functor, and often write $M^{\omega}$ in place of $\omega(M)$.

\begin{proposition} \label{prop:omega-zero}
Let $M$ be an object of $\cC$ such that $M^{\omega}=0$. Then $M=0$.
\end{proposition}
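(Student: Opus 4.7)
The plan is to reduce this to a standard fact about faithful additive functors: namely, that faithfulness forces the reflection of zero objects. The stronger hypotheses in \prop{For} (exactness, cocontinuity, \prop{PI}) are not needed here; we only use that $\omega$ is a faithful additive functor between additive categories.

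The argument proceeds in a single step. Consider the two endomorphisms $\mathrm{id}_M, 0 \colon M \to M$. Applying $\omega$, we obtain
\begin{displaymath}
\omega(\mathrm{id}_M) = \mathrm{id}_{M^\omega}, \qquad \omega(0_{M,M}) = 0_{M^\omega, M^\omega},
\end{displaymath}
where the first equality holds because $\omega$ is a functor and the second because $\omega$ is additive. Since $M^\omega = 0$, we have $\mathrm{id}_{M^\omega} = 0_{M^\omega, M^\omega}$, and hence $\omega(\mathrm{id}_M) = \omega(0_{M,M})$. By faithfulness of $\omega$, we conclude $\mathrm{id}_M = 0_{M,M}$, which forces $M = 0$.

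There is no real obstacle here; the only thing to be careful about is invoking additivity to identify $\omega(0_{M,M})$ with the zero morphism in $\cD$, which is immediate from the standard characterization of additive functors. All other hypotheses packaged into \prop{For} (exactness, cocontinuity, and \prop{PI}) are irrelevant for this particular proposition, though they will be essential in subsequent applications of $\omega$.
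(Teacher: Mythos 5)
Your proof is correct and follows exactly the same route as the paper: identify $\omega(\id_M)$ with the zero endomorphism of $M^\omega = 0$, then invoke faithfulness to conclude $\id_M = 0$, hence $M = 0$. The only difference is that you spell out the additivity step explicitly, which the paper leaves implicit.
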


\begin{proof}
We have $\id_M^{\omega}=0$, and so $\id_M=0$ since $\omega$ is faithful. Thus $M=0$.
\end{proof}

\begin{proposition}
Let $f \colon M \to N$ be a morphism in $\cC$. Then $f$ is injective (resp.\ surjective) if and only if $f^{\omega}$ is injective (resp.\ surjective).
\end{proposition}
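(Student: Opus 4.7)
The plan is to handle both the injective and surjective statements by the same pattern: translate each property into the vanishing of a kernel or cokernel, and then invoke exactness of $\omega$ together with Proposition~\ref{prop:omega-zero}.

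For the forward direction, I would simply use that $\omega$ is exact. If $f$ is injective, then $0 \to M \to N$ is exact, so applying $\omega$ yields an exact sequence $0 \to M^{\omega} \to N^{\omega}$, i.e.\ $f^{\omega}$ is injective. Likewise, if $f$ is surjective then $M \to N \to 0$ is exact, and exactness of $\omega$ gives surjectivity of $f^{\omega}$.

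For the converse, let $K = \ker(f)$ and $C = \coker(f)$. Since $\omega$ is exact, $K^{\omega} = \ker(f^{\omega})$ and $C^{\omega} = \coker(f^{\omega})$. If $f^{\omega}$ is injective, then $K^{\omega} = 0$, so Proposition~\ref{prop:omega-zero} gives $K = 0$, i.e.\ $f$ is injective. Analogously, if $f^{\omega}$ is surjective, then $C^{\omega} = 0$, so $C = 0$ and $f$ is surjective.

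There is no real obstacle here: property \prop{PI} plays no role (it will be used elsewhere), and the argument is just the standard interplay between exactness and faithfulness — the ``faithful exact implies reflects exactness'' principle — repackaged through the preceding proposition. The only thing to be careful about is that the kernel and cokernel exist in $\cC$ (they do, since $\cC$ is Grothendieck abelian) and that $\omega$ preserves them (it does, by exactness and cocontinuity).
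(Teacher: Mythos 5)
Your argument is correct and matches the paper's proof essentially verbatim: exactness of $\omega$ gives the forward implications, and the converse reduces to $\ker(f)^\omega=\ker(f^\omega)=0$ (resp.\ $\coker(f)^\omega=0$) together with Proposition~\ref{prop:omega-zero}. Your remark that \prop{PI} is not needed here is also accurate.
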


\begin{proof}
If $f$ is injective or surjective, so is $f^{\omega}$ since $\omega$ is exact. Now suppose that $f^{\omega}$ is injective. Then $\ker(f)^{\omega}=\ker(f^{\omega})=0$, and so $\ker(f)=0$ (Proposition~\ref{prop:omega-zero}), and so $f$ is injective. The proof for surjectivity is similar.
\end{proof}

\begin{proposition} \label{prop:for-contain}
Let $M$ be an object of $\cC$ and let $X$ and $Y$ be subobjects of $M$.
\begin{enumerate}
\item $X^{\omega}$ is a subobject of $M^{\omega}$.
\item $X \subset Y$ if and only if $X^{\omega} \subset Y^{\omega}$.
\item $X=Y$ if and only if $X^{\omega}=Y^{\omega}$.
\end{enumerate}
\end{proposition}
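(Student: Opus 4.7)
My plan is to deduce the three parts in order, leveraging the preceding proposition (which says $\omega$ both preserves and reflects injections and surjections) together with the exactness and faithfulness assumptions built into \prop{For}. The key principle throughout is that $\omega$ reflects injectivity, so that being a subobject is both a property preserved and detected by $\omega$.

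For part (a), I would simply apply the preceding proposition to the inclusion $X \hookrightarrow M$: this map is injective, so its image under $\omega$ is also injective, and thus $X^{\omega}$ is naturally a subobject of $M^{\omega}$.

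For part (b), the forward direction is immediate functoriality: if $X \subset Y$, factor the inclusion $X \hookrightarrow M$ through $Y$ and apply $\omega$ to obtain a corresponding factorization $X^{\omega} \hookrightarrow Y^{\omega} \hookrightarrow M^{\omega}$ (using part (a) to ensure these are genuine subobject inclusions). The reverse direction is the only step with any content. Assume $X^{\omega} \subset Y^{\omega}$ inside $M^{\omega}$, and consider the composite $\pi \colon X \hookrightarrow M \twoheadrightarrow M/Y$. The claim $X \subset Y$ is equivalent to $\pi = 0$. Applying $\omega$ and using exactness, $\pi^{\omega}$ is identified with the composite $X^{\omega} \hookrightarrow M^{\omega} \twoheadrightarrow M^{\omega}/Y^{\omega}$, which vanishes by the hypothesis. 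Faithfulness of $\omega$ then forces $\pi = 0$, as required.

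Part (c) is a formal consequence of (b) applied to both inclusions: $X = Y$ iff $X \subset Y$ and $Y \subset X$, which translates under (b) to $X^{\omega} \subset Y^{\omega}$ and $Y^{\omega} \subset X^{\omega}$, i.e., $X^{\omega} = Y^{\omega}$. I do not anticipate a real obstacle; the only subtle point is the reverse direction of (b), where one must be careful to combine exactness (to identify $\omega$ of the quotient with the quotient of $\omega$'s) with faithfulness (to descend the vanishing of a morphism from $\cD$ back to $\cC$).
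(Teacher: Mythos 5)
Your proof is correct and essentially matches the paper's approach; the only cosmetic difference is in the reverse direction of (b), where the paper sets $Z = X + Y$, uses $Z^{\omega} = X^{\omega} + Y^{\omega} = Y^{\omega}$, and concludes $(Z/Y)^{\omega} = 0$ hence $Z = Y$ via Proposition~\ref{prop:omega-zero}, whereas you instead show the composite $X \hookrightarrow M \twoheadrightarrow M/Y$ has vanishing image under $\omega$ and invoke faithfulness directly. Both arguments rest on exactness plus faithfulness and are entirely equivalent in content.
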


\begin{proof}
(a) follows from the exactness of $\omega$. If $X \subset Y$ then clearly $X^{\omega} \subset Y^{\omega}$. Conversely, suppose that $X^{\omega} \subset Y^{\omega}$. Let $Z=X+Y$. Then $Z^{\omega}=X^{\omega}+Y^{\omega}=Y^{\omega}$. We thus see that $(Z/Y)^{\omega}=0$, and so $Y=Z$ (Proposition~\ref{prop:omega-zero}), and so $X \subset Y$. This proves (b), and (c) follows from (b).
\end{proof}

\begin{proposition} \label{prop:sum-int}
Let $M$ be an object of $\cC$ and let $\{N_i\}_{i \in I}$ be a family of subobjects. Then
\begin{displaymath}
\big( \sum_{i \in I} N_i \big)^{\omega} = \sum_{i \in I} N_i^{\omega}, \qquad
\big( \bigcap_{i \in I} N_i \big)^{\omega} = \bigcap_{i \in I} N_i^{\omega}.
\end{displaymath}
\end{proposition}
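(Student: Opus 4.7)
The plan is to handle the sum and the intersection separately, since each uses a different property from \prop{For}.

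For the sum equality, the plan is to combine cocontinuity and exactness of $\omega$. By definition, $\sum_{i \in I} N_i$ is the image of the canonical map $\bigoplus_{i \in I} N_i \to M$. Since $\omega$ is cocontinuous it sends this direct sum to $\bigoplus_{i \in I} N_i^{\omega}$, and since $\omega$ is exact it preserves images. Thus $\big(\sum_i N_i\big)^{\omega}$ is the image of the induced map $\bigoplus_{i \in I} N_i^{\omega} \to M^{\omega}$, which is by definition $\sum_{i \in I} N_i^{\omega}$. This step is essentially formal.

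For the intersection, the key is \prop{PI}. First I would realize $\bigcap_{i \in I} N_i$ as the kernel of the canonical map $f \colon M \to \prod_{i \in I} M/N_i$, which exists because $\cC$ is Grothendieck abelian. By exactness,
\[
\big(\bigcap_{i \in I} N_i\big)^{\omega} \;=\; \ker\bigl(\omega(f) \colon M^{\omega} \to \omega(\textstyle\prod_{i} M/N_i)\bigr).
\]
Next I would compose $\omega(f)$ with the natural map $\iota \colon \omega(\prod_i M/N_i) \to \prod_i \omega(M/N_i)$, which is injective by \prop{PI}. Injectivity of $\iota$ gives $\ker(\omega(f)) = \ker(\iota \circ \omega(f))$, and I would then argue that $\iota \circ \omega(f)$ coincides with the canonical map $M^{\omega} \to \prod_i M^{\omega}/N_i^{\omega}$ induced by the quotients, whose kernel is $\bigcap_i N_i^{\omega}$.

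The main obstacle is the identification of $\iota \circ \omega(f)$ with the canonical map built from the quotients $M^{\omega} \to M^{\omega}/N_i^{\omega}$. I expect this to reduce to a short diagram chase using the naturality of $\iota$: composing $\iota$ with each projection $\prod_j \omega(M/N_j) \to \omega(M/N_j)$ must recover $\omega$ applied to the corresponding projection $\prod_j M/N_j \to M/N_j$, and then the $i$-th component of $\iota \circ \omega(f)$ is identified with the quotient map $M^{\omega} \to (M/N_i)^{\omega} = M^{\omega}/N_i^{\omega}$ (the last equality by exactness). Once this is checked, the universal property of the product finishes the argument.
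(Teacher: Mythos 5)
Your proposal is correct and follows essentially the same approach as the paper: realize the sum as the image of $\bigoplus_i N_i \to M$ and use cocontinuity and exactness, then realize the intersection as the kernel of $M \to \prod_i M/N_i$, apply exactness, and postcompose with the \prop{PI}-injective comparison map to conclude. Your additional detail on identifying $\iota \circ \omega(f)$ with the canonical map via naturality is a correct elaboration of what the paper leaves implicit.
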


\begin{proof}
Let $S=\sum_{I \in I} N_i$. Then $S$ is the image of the map $\bigoplus_{i \in I} N_i \to M$. We thus see that $S^{\omega}$ is the image of the map $\bigoplus_{i \in I} N_i^{\omega} \to M^{\omega}$, which is just $\sum_{i \in I} N_i^{\omega}$. Note that here we used the fact that $\omega$ commutes with direct sums.

Now let $P=\bigcap_{i \in I} N_i$. Then $P$ is the kernel of the map $f \colon M \to \prod_{i \in I} M/N_i$. We thus see that $P^{\omega}=\ker(f^{\omega})$. Consider the maps
\begin{displaymath}
\xymatrix@C=3em{
M^{\omega} \ar[r]^-{f^{\omega}} & (\prod_{i \in I} M/N_i)^{\omega} \ar[r]^-g & \prod_{i \in I} (M/N_i)^{\omega} }
\end{displaymath}
where $g$ is the natural map. By \prop{PI}, $g$ is injective, and so we have
\begin{displaymath}
P^{\omega}=\ker(f^{\omega})=\ker(g \circ f^{\omega}) = \bigcap_{i \in I} N_i^{\omega}.
\end{displaymath}
This completes the proof.
\end{proof}

\begin{proposition} \label{prop:min-max}
Let $M$ be an object of $\cC$ and let $N$ be a subobject of $M^{\omega}$.
\begin{enumerate}
\item There exists a unique maximal subobject $X$ of $M$ such that $X^{\omega} \subset N$.
\item There exists a unique minimal subobject $Y$ of $M$ such that $N \subset Y^{\omega}$.
\end{enumerate}
\end{proposition}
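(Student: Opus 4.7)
The plan is to deduce both statements directly from Proposition~\ref{prop:sum-int}, which tells us that $\omega$ commutes with arbitrary sums and intersections of subobjects. Uniqueness in each part will follow from Proposition~\ref{prop:for-contain}(c): a subobject of $M$ is determined by its image under $\omega$.

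For part (a), I would let $\cS$ be the family of all subobjects $X_i$ of $M$ such that $X_i^{\omega} \subset N$; this family is nonempty since it contains the zero subobject. Set $X = \sum_{i} X_i$. By Proposition~\ref{prop:sum-int},
\begin{displaymath}
X^{\omega} = \sum_i X_i^{\omega} \subset N,
\end{displaymath}
so $X \in \cS$, and by construction $X$ contains every element of $\cS$. Any other subobject with these two properties has the same image under $\omega$ as $X$, and hence equals $X$ by Proposition~\ref{prop:for-contain}(c).

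For part (b), I would proceed dually. Let $\cT$ be the family of all subobjects $Y_j$ of $M$ such that $N \subset Y_j^{\omega}$; this is nonempty since it contains $M$ itself. Set $Y = \bigcap_j Y_j$. Proposition~\ref{prop:sum-int} gives
\begin{displaymath}
Y^{\omega} = \bigcap_j Y_j^{\omega} \supset N,
\end{displaymath}
so $Y \in \cT$, and by construction $Y$ is contained in every element of $\cT$. Uniqueness again follows from Proposition~\ref{prop:for-contain}(c).

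There is no real obstacle here: the entire content of the proposition is packaged in Proposition~\ref{prop:sum-int}, and the only nontrivial ingredient (\prop{PI}, used to show $\omega$ commutes with intersections) has already been handled there. The one subtle point worth flagging in the write-up is that $\cT$ is nonempty, which is what guarantees that the intersection defining $Y$ is taken over a nonempty collection and therefore lies inside $M$.
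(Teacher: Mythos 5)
Your proof is correct and takes essentially the same approach as the paper: both form the sum (resp.\ intersection) of all qualifying subobjects and use Proposition~\ref{prop:sum-int} to verify that it still qualifies, with uniqueness then being immediate. The extra invocation of Proposition~\ref{prop:for-contain}(c) for uniqueness is harmless but unnecessary, since by construction $X$ contains (resp.\ $Y$ is contained in) every competitor.
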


\begin{proof}
(a) Let $\cU$ be the set of all subobjects $T$ of $M$ such that $T^{\omega} \subset N$, and let $X=\sum_{T \in \cU} T$. By Proposition~\ref{prop:sum-int}, we have $X^{\omega}=\sum_{T \in \cU} T^{\omega} \subset N$, and so $X \in \cU$. Clearly, $X$ is the unique maximal member of $\cU$.

(b) Let $\cV$ be the set of all subobjects $T$ of $M$ such that $N \subset T^{\omega}$, and let $Y=\bigcap_{T \in \cU} T$. By Proposition~\ref{prop:sum-int}, we have $Y^{\omega}=\bigcap_{T \in \cV} T^{\omega} \supset N$, and so $Y \in \cV$. Clearly, $Y$ is the unique minimal member of $\cV$.
\end{proof}

\begin{definition}
Let $M$ be an object of $\cC$ and let $N$ be a subobject of $M^{\omega}$.
\begin{enumerate}
\item We write $\lfloor N \rfloor_{\cC}$, or simply $\lfloor N \rfloor$, for the maximal object in Proposition~\ref{prop:min-max}(a).
\item We write $\lceil N \rceil_{\cC}$, or simply $\lceil N \rceil$, for the minimal object in Proposition~\ref{prop:min-max}(b). \qedhere
\end{enumerate}
\end{definition}

\begin{example}
Let $G$ be a group, let $k$ be a field, let $\cC=\Rep_k(G)$, let $\cD=\Vec_k$, and let $\omega \colon \cC \to \cD$ be the forgetful functor. Let $M \in \cC$ and let $N$ be a subobject of $M^{\omega}$; thus $M$ is a representation of $G$, and $N$ is a vector subspace of $M$. In this case, $\lfloor N \rfloor = \bigcap_{g \in G} gN$ is the maximal subrepresentation contained in $N$, and $\lceil N \rceil = \sum_{g \in G} gN$ is the subrepresentation generated by $N$.
\end{example}

\begin{proposition} \label{prop:floor-int}
Let $M$ be an object of $\cC$ and let $\{N_i\}_{i \in I}$ be a family of subobjects of $M^{\omega}$. Then
\begin{displaymath}
\big\lfloor \bigcap_{i \in I} N_i \big\rfloor = \bigcap_{i \in I} \lfloor N_i \rfloor, \qquad
\big\lceil \sum_{i \in I} N_i \big\rceil = \sum_{i \in I} \lceil N_i \rceil.
\end{displaymath}
\end{proposition}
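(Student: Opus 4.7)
The plan is to prove each of the two equalities by two inclusions, leaning entirely on the defining universal properties of $\lfloor - \rfloor$ and $\lceil - \rceil$ from Proposition~\ref{prop:min-max} together with the fact that $\omega$ preserves both sums and intersections of subobjects (Proposition~\ref{prop:sum-int}). No additional input is needed; the two equalities are formally dual.

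For the first equality, I would set $X = \lfloor \bigcap_{i \in I} N_i \rfloor$ and observe that $X^{\omega} \subset \bigcap_{i} N_i \subset N_j$ for each $j$, so by maximality of $\lfloor N_j \rfloor$ we get $X \subset \lfloor N_j \rfloor$, giving the inclusion $X \subset \bigcap_{i} \lfloor N_i \rfloor$. Conversely, setting $Y = \bigcap_{i} \lfloor N_i \rfloor$, Proposition~\ref{prop:sum-int} yields
\[
Y^{\omega} = \bigcap_{i \in I} \lfloor N_i \rfloor^{\omega} \subset \bigcap_{i \in I} N_i,
\]
and then maximality of $\lfloor \bigcap_i N_i \rfloor$ gives $Y \subset \lfloor \bigcap_i N_i \rfloor$.

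For the second equality, I would dualize the above. Setting $Z = \sum_{i} \lceil N_i \rceil$, Proposition~\ref{prop:sum-int} gives $Z^{\omega} = \sum_{i} \lceil N_i \rceil^{\omega} \supset \sum_{i} N_i$, so the minimality of $\lceil \sum_i N_i \rceil$ forces $\lceil \sum_i N_i \rceil \subset Z$. Conversely, setting $W = \lceil \sum_{i} N_i \rceil$, we have $N_j \subset \sum_{i} N_i \subset W^{\omega}$ for each $j$, so minimality of $\lceil N_j \rceil$ gives $\lceil N_j \rceil \subset W$, whence $\sum_{j} \lceil N_j \rceil \subset W$.

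There is no genuine obstacle here; the only substantive ingredient is Proposition~\ref{prop:sum-int}, which in turn rests on cocontinuity of $\omega$ (for sums) and property \prop{PI} (for intersections). Once these are in hand, the proof is a direct unpacking of the universal properties defining $\lfloor - \rfloor$ and $\lceil - \rceil$.
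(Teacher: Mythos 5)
Your proof is correct and follows the same approach as the paper: both sides of each equality are compared using Proposition~\ref{prop:sum-int} together with the maximality/minimality characterizations of $\lfloor - \rfloor$ and $\lceil - \rceil$. The only difference is that you write out the argument for sums explicitly, whereas the paper dismisses it as ``similar.''
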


\begin{proof}
Let $X=\lfloor \bigcap_{i \in I} N_i \rfloor$ and $Y=\bigcap_{i \in I} \lfloor N_i \rfloor$. We have $X^{\omega} \subset \bigcap_{i \in I} N_i$, and so $X \subset N_i$ for each $i$. Thus, by definition, we have $X \subset \lfloor N_i \rfloor$ for each $i$, and so $X \subset Y$. By Proposition~\ref{prop:sum-int}, we have $Y^{\omega}=\bigcap_{i \in I} \lfloor N_i \rfloor^{\omega} \subset \bigcap_{i \in I} N_i$. Thus, by definition, we have $Y \subset X$. The proof for sums is similar.
\end{proof}

\begin{proposition} \label{prop:ceil-fg}
Let $M$ be an object of $\cC$, and let $N$ be a finitely generated subobject of $M^{\omega}$. Then $\lceil N \rceil$ is finitely generated.
\end{proposition}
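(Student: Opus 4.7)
The plan is to exhibit $\lceil N \rceil$ itself as a single finitely generated subobject supplied by the admissibility of $\cC$. First I would use admissibility to write $\lceil N \rceil = \sum_{\alpha} Z_\alpha$, where $\{Z_\alpha\}$ ranges over the finitely generated subobjects of $\lceil N \rceil$. A sum of finitely many finitely generated subobjects is again finitely generated (it is a quotient of their direct sum, and both direct sums and quotients preserve finite generation directly from the categorical definition), so I may arrange this family to be directed under inclusion.

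Next, applying Proposition~\ref{prop:sum-int} yields $\lceil N \rceil^{\omega} = \sum_{\alpha} Z_\alpha^{\omega}$, and $N \subset \lceil N \rceil^{\omega}$ gives $N = N \cap \sum_\alpha Z_\alpha^{\omega}$. Because $\cD$ is Grothendieck abelian (hence satisfies AB5) and the family $\{Z_\alpha^{\omega}\}$ is directed, this intersection distributes over the sum, giving $N = \sum_{\alpha}(N \cap Z_\alpha^{\omega})$. Finite generation of $N$ in $\cD$ then provides a finite subfamily whose sum is already $N$, and directedness collapses that subfamily to a single index $\alpha_0$ with $N \subset Z_{\alpha_0}^{\omega}$.

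By the minimality in the definition of $\lceil N \rceil$, the inclusion $N \subset Z_{\alpha_0}^{\omega}$ forces $\lceil N \rceil \subset Z_{\alpha_0}$. Since $Z_{\alpha_0} \subset \lceil N \rceil$ by construction, $\lceil N \rceil = Z_{\alpha_0}$, which is finitely generated.

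The main obstacle I anticipate is the directedness step: the key point is that the abstract notion of finite generation used here is closed under finite sums and quotients, which legitimizes passing to a directed system, and that the directed sum then commutes with intersection by $N$ via AB5 in $\cD$. Once those categorical points are in hand, Proposition~\ref{prop:sum-int} does all the work of translating between $\cC$ and $\cD$, and minimality of $\lceil N \rceil$ closes the argument.
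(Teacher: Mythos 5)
Your proof is correct and follows essentially the same route as the paper's: both hinge on Proposition~\ref{prop:sum-int} to pass a decomposition of $\lceil N \rceil$ through $\omega$, use finite generation of $N$ (via an AB5/directedness argument in $\cD$) to trap $N$ inside a single finitely generated piece, and then invoke the minimality defining $\lceil N \rceil$. The only cosmetic difference is that you fix the particular directed family of all finitely generated subobjects of $\lceil N \rceil$ (via admissibility of $\cC$) and collapse it to one index, whereas the paper starts from an arbitrary family $\lceil N \rceil = \sum_{i \in I} K_i$ and verifies the definition of finite generation directly; both variants are valid and rely on the same key ingredients.
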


\begin{proof}
Suppose that $\lceil N \rceil=\sum_{i \in I} K_i$ for subobjects $K_i$ of $M$. Appealing to Proposition~\ref{prop:sum-int}, we find $N \subset \lceil N \rceil^{\omega} = \sum_{i \in I} K_i^{\omega}$. Since $N$ is finitely generated, it follows that there is a finite subset $J$ of $I$ such that $N \subset \sum_{i \in J} K_i^{\omega} = (\sum_{i \in J} K_i)^{\omega}$. Thus, by definition, we have $\lceil N \rceil \subset \sum_{i \in J} K_i$, and so $\lceil N \rceil$ is finitely generated.
\end{proof}

\subsection{Fiber functors}

In the theory of Tannakian categories, a fiber functor is a symmetric monoidal  functor to vector spaces that is exact and faithful. We will use the term in a slightly different sense:

\begin{definition}
Let $\cC$ and $\cD$ be tensor categories. A \defn{fiber functor} $\omega \colon \cC \to \cD$ is a symmetric monoidal functor that satisfies \prop{For}.
\end{definition}

Fix a fiber functor $\omega \colon \cC \to \cD$ and $A \in \Comm(\cC)$.

\begin{proposition}
Let $M$ be an $A$-module, and let $X \subset A$ and $Y \subset M$ be subobjects. Then $(XY)^{\omega}=X^{\omega} Y^{\omega}$.
\end{proposition}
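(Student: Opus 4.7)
The plan is to unfold both sides directly from the definition of the product of subobjects and then transport everything across the natural isomorphism coming from monoidality of $\omega$. Recall that $XY$ is, by definition, the image of the composite
\begin{displaymath}
f \colon X \otimes Y \longrightarrow A \otimes M \longrightarrow M,
\end{displaymath}
where the first arrow is the tensor product of the two inclusions and the second is the $A$-action on $M$. Similarly, $X^{\omega} Y^{\omega}$ is the image of the analogous map
\begin{displaymath}
g \colon X^{\omega} \otimes Y^{\omega} \longrightarrow A^{\omega} \otimes M^{\omega} \longrightarrow M^{\omega},
\end{displaymath}
formed using the $A^{\omega}$-module structure on $M^{\omega}$. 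Here $A^{\omega}$ is a commutative algebra in $\cD$ and $M^{\omega}$ an $A^{\omega}$-module because $\omega$ is symmetric monoidal.

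First I would apply $\omega$ to the definition of $XY$. Since $\omega$ is exact (by \prop{For}) it preserves images, so $(XY)^{\omega}$ is the image of $f^{\omega} \colon (X \otimes Y)^{\omega} \to M^{\omega}$. The monoidal structure on $\omega$ provides a natural isomorphism $\mu_{X,Y} \colon (X \otimes Y)^{\omega} \xrightarrow{\sim} X^{\omega} \otimes Y^{\omega}$, and precomposition with an isomorphism does not change the image. Thus $(XY)^{\omega}$ is equally the image of $f^{\omega} \circ \mu_{X,Y}^{-1} \colon X^{\omega} \otimes Y^{\omega} \to M^{\omega}$.

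The heart of the argument, and essentially the only content, is then to check that the latter map coincides with $g$. This is a diagram chase that uses naturality of $\mu$ on the inclusions $X \hookrightarrow A$ and $Y \hookrightarrow M$ (so that $\mu$ intertwines $(X \otimes Y)^{\omega} \to (A \otimes M)^{\omega}$ with $X^{\omega} \otimes Y^{\omega} \to A^{\omega} \otimes M^{\omega}$), together with the very definition of the $A^{\omega}$-module structure on $M^{\omega}$: the action map $A^{\omega} \otimes M^{\omega} \to M^{\omega}$ is, by construction, obtained by applying $\omega$ to $A \otimes M \to M$ and composing with $\mu_{A,M}^{-1}$. Once these two compatibilities are assembled, the two triangles coincide and we conclude $(XY)^{\omega} = \operatorname{im}(g) = X^{\omega} Y^{\omega}$.

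I do not expect a real obstacle here; the only thing requiring attention is the bookkeeping of the monoidal coherence isomorphism $\mu$, to make sure that the structural maps defining $X^{\omega} Y^{\omega}$ inside $M^{\omega}$ really are the $\omega$-images of those defining $XY$ inside $M$. Everything else is formal from exactness, faithfulness, and monoidality of $\omega$.
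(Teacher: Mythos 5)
Your proposal is correct and follows exactly the same route as the paper's proof: apply exactness of $\omega$ to preserve the image defining $XY$, use the monoidal isomorphism $(X\otimes Y)^{\omega}\cong X^{\omega}\otimes Y^{\omega}$, and identify the resulting map with the one defining $X^{\omega}Y^{\omega}$. You simply spell out the naturality/coherence bookkeeping that the paper leaves implicit.
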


\begin{proof}
By definition, $XY$ is the image of the map $X \otimes Y \to M$. We thus see that $(XY)^{\omega}$ is the image of the map $X^{\omega} \otimes Y^{\omega}=(X \otimes Y)^{\omega} \to M^{\omega}$, which is $X^{\omega} Y^{\omega}$.
\end{proof}

\begin{proposition}
Suppose $\fp$ is an ideal of $A$ such that $\fp^{\omega}$ is a prime ideal of $A^{\omega}$. Then $\fp$ is a prime ideal of $A$.
\end{proposition}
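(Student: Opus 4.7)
The plan is to reduce the primality of $\fp$ to the primality of $\fp^{\omega}$ by pulling the condition $XY \subset \fp$ back and forth through $\omega$. Concretely, I take arbitrary subobjects $X, Y \in [A]$ with $XY \subset \fp$, and I want to conclude that $X \subset \fp$ or $Y \subset \fp$.

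First, I apply $\omega$. By the preceding proposition we have $(XY)^{\omega} = X^{\omega} Y^{\omega}$, and by Proposition~\ref{prop:for-contain}(b) the inclusion $XY \subset \fp$ translates to $X^{\omega} Y^{\omega} \subset \fp^{\omega}$ inside $A^{\omega}$. Since $X^{\omega}$ and $Y^{\omega}$ are subobjects of $A^{\omega}$ (again by Proposition~\ref{prop:for-contain}(a)), and $\fp^{\omega}$ is assumed to be a prime ideal of $A^{\omega}$, the defining property of primeness in $\cD$ gives $X^{\omega} \subset \fp^{\omega}$ or $Y^{\omega} \subset \fp^{\omega}$.

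Finally I push the conclusion back through $\omega$: applying Proposition~\ref{prop:for-contain}(b) once more, $X^{\omega} \subset \fp^{\omega}$ implies $X \subset \fp$, and similarly for $Y$. This yields the required dichotomy, completing the proof.

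There is essentially no obstacle here: the argument is a direct two-step reflection of the prime condition along $\omega$, with the only content being that $\omega$ preserves the product operation on subobjects (which is built into the definition of a fiber functor via its being symmetric monoidal) and that $\omega$ reflects inclusions (which follows from \prop{For} via Proposition~\ref{prop:for-contain}). The only mild subtlety worth flagging is that we need $\fp^{\omega}$ to be an actual ideal of $A^{\omega}$ for the hypothesis to even make sense; this is automatic since $\omega$ is exact and monoidal, so the $A$-module structure on $\fp$ pushes forward to an $A^{\omega}$-module structure on $\fp^{\omega}$.
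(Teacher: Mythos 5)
Your proof is correct and follows essentially the same route as the paper's: push $XY \subset \fp$ through $\omega$ via $(XY)^{\omega}=X^{\omega}Y^{\omega}$, invoke primality of $\fp^{\omega}$, then reflect the conclusion back using Proposition~\ref{prop:for-contain}.
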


\begin{proof}
Let $X$ and $Y$ be subobjects of $A$ such that $XY \subset \fp$. Then $X^{\omega} Y^{\omega} \subset \fp^{\omega}$. Since $\fp^{\omega}$ is prime, it follows that $X^{\omega} \subset \fp^{\omega}$ or $Y^{\omega} \subset \fp^{\omega}$. By Proposition~\ref{prop:for-contain}, we find $X \subset \fp$ or $Y \subset \fp$, and so $\fp$ is prime.
\end{proof}

\begin{proposition} \label{prop:fiber-rad}
Let $\fa$ be an ideal of $A$. Then $(\rad{\fa})^{\omega} \subset \rad(\fa^{\omega})$.
\end{proposition}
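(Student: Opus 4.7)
The strategy is immediate from the definition of the radical together with the two facts already established about $\omega$, namely that it commutes with arbitrary sums of subobjects (Proposition~\ref{prop:sum-int}) and with the multiplication operation on subobjects (the proposition immediately preceding the statement).

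First, I would write $\rad(\fa) = \sum_{X \in \cU} X$, where $\cU$ is the collection of all $X \in [A]$ such that $X^n \subset \fa$ for some $n = n(X)$. Applying Proposition~\ref{prop:sum-int} to the subobjects $\{X\}_{X \in \cU}$ of $A$, I get
\begin{displaymath}
(\rad \fa)^{\omega} = \sum_{X \in \cU} X^{\omega}.
\end{displaymath}
It therefore suffices to show that $X^{\omega} \subset \rad(\fa^{\omega})$ for each individual $X \in \cU$.

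Fix such an $X$, with $X^n \subset \fa$. The key step is an inductive application of the identity $(YZ)^{\omega} = Y^{\omega} Z^{\omega}$ from the previous proposition: it yields $(X^n)^{\omega} = (X^{\omega})^n$ as subobjects of $A^{\omega}$. Combined with the monotonicity statement in Proposition~\ref{prop:for-contain}(b), the containment $X^n \subset \fa$ forces $(X^{\omega})^n \subset \fa^{\omega}$. By the very definition of $\rad(\fa^{\omega})$ applied inside the tensor category $\cD$, this means $X^{\omega} \subset \rad(\fa^{\omega})$.

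Summing over $X \in \cU$ and using the display above then gives $(\rad\fa)^{\omega} \subset \rad(\fa^{\omega})$, as desired. There is essentially no obstacle here; the proof is a direct unwinding of the definitions, and the only point requiring attention is that the equality $(X^n)^{\omega} = (X^{\omega})^n$ genuinely needs the monoidal (not merely additive) structure of $\omega$, which is why the statement is placed in the fiber-functor section rather than the forgetful-functor section. The reverse inclusion $\rad(\fa^{\omega}) \subset (\rad\fa)^{\omega}$ is, as one would expect, a much deeper matter and presumably belongs to the later axiomatic properties \prop{A} and \prop{B}.
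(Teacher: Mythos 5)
Your proof is correct and follows essentially the same strategy as the paper's: decompose $\rad(\fa)$ as a sum of subobjects each of which has a power inside $\fa$, pass the sum through $\omega$ via Proposition~\ref{prop:sum-int}, and use monoidality of $\omega$ on each summand. The only (slight) difference is that you index the sum directly over the family $\cU$ appearing in the definition of the radical, whereas the paper instead writes $\rad(\fa)$ as a sum of finitely generated subobjects (via admissibility) and then invokes Proposition~\ref{prop:fg-nilp} to see that each is nilpotent modulo $\fa$; your choice is marginally more economical since it sidesteps \ref{prop:fg-nilp} altogether.
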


\begin{proof}
Write $\rad{\fa}=\sum_{i \in I} X_i$ where each $X_i$ is finitely generated. Then $(\rad{\fa})^{\omega}=\sum_{i \in I} X_i^{\omega}$ by Proposition~\ref{prop:sum-int}. It thus suffices to show that $X_i^{\omega} \subset \rad(\fa^{\omega})$ for each $i$. Thus fix $i \in I$. Since $X_i$ is finitely generated and contained in $\rad(\fa)$, we have $X_i^n \subset \fa$ for some $n$. We thus see that $(X_i^{\omega})^n=(X_i^n)^{\omega} \subset \fa^{\omega}$, and so $X_i^{\omega} \subset \rad(\fa^{\omega})$. This completes the proof.
\end{proof}

\begin{proposition}
Let $M$ be an $A$-module, and let $N$ be an $A^{\omega}$-submodule of $M^{\omega}$. Then $\lfloor N \rfloor$ is an $A$-submodule of $M$.
\end{proposition}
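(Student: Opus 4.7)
The plan is to use the maximality property defining $\lfloor N \rfloor$, combined with the fact that $\omega$ is monoidal and therefore commutes with the multiplication operation on subobjects. By Proposition~\ref{prop:multiplication}(e), a subobject $K$ of $M$ is an $A$-submodule if and only if $AK \subset K$. So the task reduces to showing $A \cdot \lfloor N \rfloor \subset \lfloor N \rfloor$.

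First, I would compute $(A \cdot \lfloor N \rfloor)^{\omega}$ using the previous proposition that $(XY)^{\omega} = X^{\omega} Y^{\omega}$ for $X \in [A]$, $Y \in [M]$. This gives $(A \cdot \lfloor N \rfloor)^{\omega} = A^{\omega} \cdot \lfloor N \rfloor^{\omega}$. Since $\lfloor N \rfloor^{\omega} \subset N$ by the very definition of $\lfloor N \rfloor$ (Proposition~\ref{prop:min-max}(a)), and since $N$ is an $A^{\omega}$-submodule of $M^{\omega}$, we get $A^{\omega} \cdot \lfloor N \rfloor^{\omega} \subset A^{\omega} \cdot N \subset N$ using Proposition~\ref{prop:multiplication}(d) and the submodule property of $N$.

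Thus $A \cdot \lfloor N \rfloor$ is a subobject of $M$ whose image under $\omega$ lies in $N$. By the maximality in the definition of $\lfloor N \rfloor$, we conclude $A \cdot \lfloor N \rfloor \subset \lfloor N \rfloor$, which by Proposition~\ref{prop:multiplication}(e) shows $\lfloor N \rfloor$ is an $A$-submodule of $M$.

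There is no real obstacle here; the argument is a one-line application of monoidality of $\omega$ together with the universal property of $\lfloor N \rfloor$. The only subtlety to flag is that the previous proposition $(XY)^{\omega}=X^{\omega}Y^{\omega}$ is stated for subobjects of $A$ and $M$ respectively, which is exactly the setting we need.
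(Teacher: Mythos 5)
Your proof is correct and matches the paper's argument exactly: both compute $(A\lfloor N\rfloor)^\omega = A^\omega\lfloor N\rfloor^\omega \subset A^\omega N \subset N$ using monoidality of $\omega$ and the submodule property of $N$, then invoke the maximality of $\lfloor N\rfloor$ to conclude $A\lfloor N\rfloor \subset \lfloor N\rfloor$. You have simply spelled out a couple of the intermediate justifications more explicitly.
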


\begin{proof}
We have
\begin{displaymath}
(A \lfloor N \rfloor)^{\omega}=A^{\omega} (\lfloor N \rfloor)^{\omega} \subset A^{\omega} N \subset N,
\end{displaymath}
and so $A \lfloor N \rfloor \subset \lfloor N \rfloor$, i.e., $\lfloor N \rfloor$ is an $A$-submodule of $M$.
\end{proof}

In particular, we see that if $\fa$ is an ideal of $A^{\omega}$ then $\lfloor \fa \rfloor$ is an ideal of $A$. This construction should be thought of as a kind of contraction. The following provides evidence for this:

\begin{proposition} \label{prop:contract-prime}
Let $\fq$ be a prime ideal of $A^{\omega}$. Then $\lfloor \fq \rfloor$ is a prime ideal of $A$.
\end{proposition}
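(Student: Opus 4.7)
The strategy is to pull back the primality condition from $A^{\omega}$ through $\omega$ using the definition of $\lfloor \cdot \rfloor$. We already know from the preceding proposition that $\lfloor \fq \rfloor$ is an ideal of $A$, so the task reduces to verifying the prime condition itself.

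First I would take $X, Y \in [A]$ with $XY \subset \lfloor \fq \rfloor$ and apply $\omega$. Using the identity $(XY)^{\omega} = X^{\omega} Y^{\omega}$ (the proposition just above on $\omega$ and products of subobjects) together with the defining inclusion $\lfloor \fq \rfloor^{\omega} \subset \fq$, I obtain
\begin{displaymath}
X^{\omega} Y^{\omega} = (XY)^{\omega} \subset \lfloor \fq \rfloor^{\omega} \subset \fq.
\end{displaymath}
Since $\fq$ is a prime ideal of $A^{\omega}$, primality (applied to the subobjects $X^{\omega}, Y^{\omega}$ of $A^{\omega}$) yields $X^{\omega} \subset \fq$ or $Y^{\omega} \subset \fq$.

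Without loss of generality suppose $X^{\omega} \subset \fq$. Then $X$ is a subobject of $A$ whose $\omega$-image lies in $\fq$, so by the maximality property characterizing $\lfloor \fq \rfloor$ (Proposition~\ref{prop:min-max}(a) and the definition of $\lfloor \cdot \rfloor$), we get $X \subset \lfloor \fq \rfloor$. This establishes the prime condition, completing the argument.

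There is no real obstacle: every ingredient is either a formal property of $\omega$ already recorded (exactness and faithfulness via Proposition~\ref{prop:for-contain}, multiplicativity on subobjects) or the universal property defining $\lfloor \cdot \rfloor$. The only thing to keep in mind is that one should apply primality of $\fq$ to $X^{\omega}$ and $Y^{\omega}$ as arbitrary subobjects of $A^{\omega}$, which is legitimate since the definition of prime in $\Comm(\cD)$ permits testing against all subobjects.
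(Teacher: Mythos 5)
Your proof is correct and follows essentially the same argument as the paper: apply $\omega$, use $(XY)^{\omega}=X^{\omega}Y^{\omega}$ together with $\lfloor\fq\rfloor^{\omega}\subset\fq$, invoke primality of $\fq$, and conclude via the defining maximality of $\lfloor\cdot\rfloor$. The only difference is that yours spells out the appeal to Proposition~\ref{prop:min-max}(a) a bit more explicitly.
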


\begin{proof}
Let $X$ and $Y$ be subobjects of $A$ such that $XY \subset \lfloor \fq \rfloor$. Thus $X^{\omega} Y^{\omega}=(XY)^{\omega} \subset \fq$. Since $\fq$ is prime, we see that $X^{\omega} \subset \fq$ or $Y^{\omega} \subset \fq$. Thus $X \subset \lfloor \fq \rfloor$ or $Y \subset \lfloor \fq \rfloor$, and so $\lfloor \fq \rfloor$ is prime.
\end{proof}

We thus have a function $\Spec(A^{\omega}) \to \Spec(A)$ given by $\fq \mapsto \lfloor \fq \rfloor$. One easily sees that it is continuous.

\section{Abstract comparison results} \label{s:axioms}

\subsection{The main properties} \label{ss:mainprops}

Let $\omega \colon \cC \to \cD$ be a fiber functor of admissible tensor categories. We consider the following conditions:
\begin{itemize}
\item[\prop{A}\label{A}] Let $A \in \Comm(\cC)$ and let $\fa$ and $\fb$ be ideals of $A$. Then $\rad(\fa) \subset \rad(\fb)$ if and only if $\rad(\fa^{\omega}) \subset \rad(\fb^{\omega})$.
\item[\prop{B}\label{B}] Let $A \in \Comm(\cC)$ and let $\fa$ be an ideal of $A$. Then $\rad(\fa)$ is prime if and only if $\rad(\fa^{\omega})$ is prime.
\end{itemize}
These are properties that are both useful and reasonable to expect in many situations. We observe that these properties imply that $\omega$ behaves well with respect to minimal primes (improving on \cite[Theorem~C]{tcaprimes}):

\begin{proposition}
Suppose $\omega$ satisfies \prop{A} and \prop{B}, and let $A \in \Comm(\cC)$. Then we have mutually inverse bijections
\begin{displaymath}
\xymatrix{
\{ \text{minimal primes of $A$} \} \ar@<2pt>[r]^{\Phi} &
\{ \text{minimal primes of $A^{\omega}$} \} \ar@<2pt>[l]^{\Psi} }
\end{displaymath}
given by $\Phi(\fp)=\rad(\fp^{\omega})$ and $\Psi(\fq)=\lfloor \fq \rfloor$.
\end{proposition}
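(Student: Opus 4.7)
The plan is to verify four assertions: that $\Phi(\fp)$ is a minimal prime whenever $\fp$ is, that symmetrically $\Psi(\fq)$ is a minimal prime whenever $\fq$ is, and that $\Phi \circ \Psi$ and $\Psi \circ \Phi$ are the identity. The common engine for all four verifications is the interplay between \prop{A}, \prop{B}, and the contraction $\lfloor \cdot \rfloor$: property \prop{B} transports primeness of radicals across $\omega$, property \prop{A} transports inclusions of radicals across $\omega$, and Proposition~\ref{prop:contract-prime} transports primes from $A^\omega$ to $A$.

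For well-definedness of $\Phi$, I start with a minimal prime $\fp$ of $A$; since $\fp = \rad(\fp)$, property \prop{B} makes $\rad(\fp^\omega)$ prime. For minimality, I take any prime $\fq'$ of $A^\omega$ with $\fq' \subset \rad(\fp^\omega)$, contract to get the prime $\lfloor \fq' \rfloor$ of $A$, and observe that $\rad(\lfloor \fq' \rfloor^\omega) \subset \rad(\fq') = \fq' \subset \rad(\fp^\omega)$. Property \prop{A} then yields $\lfloor \fq' \rfloor \subset \fp$, which is an equality by minimality of $\fp$, so $\rad(\fp^\omega) = \rad(\lfloor \fq' \rfloor^\omega) \subset \fq'$ and hence $\fq' = \rad(\fp^\omega)$. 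Well-definedness of $\Psi$ is the mirror image: given a minimal prime $\fq$ of $A^\omega$, Proposition~\ref{prop:contract-prime} gives that $\lfloor \fq \rfloor$ is prime; if $\fp' \subset \lfloor \fq \rfloor$ is prime in $A$, then \prop{B} makes $\rad((\fp')^\omega)$ a prime contained in $\rad(\lfloor \fq \rfloor^\omega) \subset \fq$, which must equal $\fq$ by minimality, and \prop{A} then forces $\fp' = \lfloor \fq \rfloor$.

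The inverse identities drop out of these computations. Applying the second paragraph's analysis with $\fp' = \lfloor \fq \rfloor$ yields $\rad(\lfloor \fq \rfloor^\omega) = \fq$, which is $\Phi \circ \Psi = \id$. For $\Psi \circ \Phi = \id$, the inclusion $\fp \subset \lfloor \rad(\fp^\omega) \rfloor$ is immediate from $\fp^\omega \subset \rad(\fp^\omega)$ and the definition of $\lfloor \cdot \rfloor$, while the reverse inclusion comes from applying \prop{A} to $\rad(\lfloor \rad(\fp^\omega) \rfloor^\omega) \subset \rad(\fp^\omega)$, which yields $\lfloor \rad(\fp^\omega) \rfloor \subset \rad(\fp) = \fp$. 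I do not foresee a substantive obstacle; the only care needed is to apply \prop{A} to ideals whose radicals are already under control and to keep careful track of which category each ideal lives in.
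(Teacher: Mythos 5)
Your proof is correct and rests on the same toolkit as the paper's argument: \prop{B} to move primeness of radicals across $\omega$, \prop{A} to move inclusions of radicals, and Proposition~\ref{prop:contract-prime} to move primes backward via $\lfloor \cdot \rfloor$. The only real difference is organizational: you verify directly that $\Phi(\fp)$ is minimal (by taking any prime $\fq' \subset \rad(\fp^\omega)$, contracting, and forcing equality), whereas the paper obtains minimality of $\Phi(\fp)$ indirectly, by first proving that $\Psi$ lands in minimal primes and that $\Phi \circ \Psi = \id$, then leveraging the resulting surjectivity of $\Phi$ on minimal primes. Your version is slightly more symmetric in how it treats the two directions, while the paper's is a bit more economical in deriving several conclusions from a single paragraph; both arguments are valid and of comparable length.
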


\begin{proof}
For any ideal $\fa$ of $A$, let $\Phi(\fa)=\rad(\fa^{\omega})$, and for any ideal $\fb$ of $A^{\omega}$ let $\Psi(\fb)=\lfloor \fb \rfloor$. Then $\Phi$ takes primes to primes by \prop{B}, and $\Psi$ takes primes to primes by Proposition~\ref{prop:contract-prime}.

Let $\fq$ be a minimal prime of $A^{\omega}$. Let $\fp=\Psi(\fq)$, and let $\fp_0 \subset \fp$ be a minimal prime. Since $\fp^{\omega} \subset \fq$, we have $\rad(\fp^{\omega}) \subset \fq$. Thus $\Phi(\fp_0) \subset \Phi(\fp) \subset \fq$, and so all three coincide by the minimality of $\fq$. By \prop{A}, we see that $\rad(\fp_0)=\rad(\fp)$, and so $\fp=\fp_0$, i.e., $\fp$ is a minimal prime. Thus $\Psi$ maps minimal primes to minimal primes, and $\Phi \circ \Psi$ is the identity on minimal primes. In particular, $\Phi$ is surjective on minimal primes.

Now let $\fp$ be a minimal prime of $A$. Let $\fq \subset \Phi(\fp)$ be a minimal prime. By the previous paragraph, we have $\fq=\Phi(\fp')$ for some minimal prime $\fp'$ of $A$. The containment $\Phi(\fp') \subset \Phi(\fp)$ implies $\fp' \subset \fp$, by \prop{A}, and so $\fp=\fp'$ by the minimality of $\fp$. Thus $\Phi(\fp)$ is a minimal prime. Since $\Psi$ maps minimal primes to minimal primes, we see that $\Psi(\Phi(\fp))$ is a minimal prime of $A$. Since $\fp^{\omega} \subset \rad(\fp^{\omega})=\Phi(\fp)$, we have $\fp \subset \Psi(\Phi(\fp))$. By minimality of $\Psi(\Phi(\fp))$, we must have equality. Thus $\Psi \circ \Phi$ is the identity on minimal primes, which completes the proof.
\end{proof}

\subsection{Generalities on property (A)}

Given $A \in \Comm(\cC)$, we say that $X \in [A]$ is \defn{locally nilpotent} if $X \subset \rad(A)$. Consider the following property on $\omega \colon \cC \to \cD$:
\begin{itemize}
\item[\propn{A}{1}] Let $A \in \Comm(\cC)$ and let $X \in [A]$. Then $X$ is locally nilpotent if and only if $X^{\omega}$ is locally nilpotent.
\item[\propn{A}{2}] For $A \in \Comm(\cC)$ we have $\lfloor \rad(A^{\omega}) \rfloor = \rad(A)$.
\end{itemize}

\begin{proposition}
We have $\prop{A} \iff \propn{A}{1} \iff \propn{A}{2}$.
\end{proposition}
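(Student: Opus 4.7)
The plan is to prove the two equivalences $\propn{A}{1} \iff \propn{A}{2}$ and $\prop{A} \iff \propn{A}{1}$ separately. The first is essentially formal, using only the definition of $\lfloor \cdot \rfloor$ together with Proposition~\ref{prop:fiber-rad}. The second requires passing to a quotient algebra and is where the real work lies.

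For $\propn{A}{1} \iff \propn{A}{2}$, I would first observe that Proposition~\ref{prop:fiber-rad} gives $\rad(A)^\omega \subset \rad(A^\omega)$, and hence $\rad(A) \subset \lfloor \rad(A^\omega) \rfloor$ by definition of $\lfloor \cdot \rfloor$. Assuming \propn{A}{1} and applying it to $X := \lfloor \rad(A^\omega) \rfloor$, whose image $X^\omega$ is tautologically locally nilpotent, yields the reverse containment and hence \propn{A}{2}. Conversely, assuming \propn{A}{2}: if $X^\omega \subset \rad(A^\omega)$ then $X \subset \lfloor \rad(A^\omega) \rfloor = \rad(A)$, while the other direction of \propn{A}{1} is again Proposition~\ref{prop:fiber-rad}.

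For $\prop{A} \Rightarrow \propn{A}{1}$ I would specialize \prop{A} to $\fa = AX$ and $\fb = 0$. Since $\rad(A)$ is an ideal containing $X$ iff it contains $AX$ iff it contains $\rad(AX)$, and an identical chain holds for $X^\omega$ in $A^\omega$ after noting $(AX)^\omega = A^\omega X^\omega$, \prop{A} delivers at once that $X$ is locally nilpotent iff $X^\omega$ is.

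The substantive direction is $\propn{A}{1} \Rightarrow \prop{A}$, which I would handle by passing to the quotient $R = A/\fb$. Since $\omega$ is exact and monoidal, $R^\omega = A^\omega/\fb^\omega$. Applying \propn{A}{1} to $R$ with the subobject $(\rad(\fa) + \fb)/\fb$ unwinds to
\[
\rad(\fa) \subset \rad(\fb) \iff \rad(\fa)^\omega \subset \rad(\fb^\omega).
\]
It then remains to bridge the right-hand side to $\rad(\fa^\omega) \subset \rad(\fb^\omega)$. One direction is immediate from Proposition~\ref{prop:fiber-rad}; for the other, the key observation is that $\rad(\fa)^\omega \subset \rad(\fb^\omega)$ forces $\fa^\omega \subset \rad(\fb^\omega)$, whereupon taking radicals and using idempotence of $\rad$ gives $\rad(\fa^\omega) \subset \rad(\fb^\omega)$. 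This last bridge is the only delicate point: $\rad(\fa^\omega)$ is generally strictly larger than $\rad(\fa)^\omega$, so one must exploit that $\rad(\fb^\omega)$ is already a radical ideal to close the gap.
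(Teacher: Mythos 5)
Your proof is correct and takes essentially the same route as the paper's: both establish $\prop{A} \Leftrightarrow \propn{A}{1}$ by specializing to a principal ideal and by passing to the quotient $A/\fb$, and both reduce $\propn{A}{1} \Leftrightarrow \propn{A}{2}$ to the definition of $\lfloor \cdot \rfloor$ together with Proposition~\ref{prop:fiber-rad}. The only cosmetic difference is that you push $\rad(\fa)$ (rather than $\fa$ itself) through the quotient and then bridge $\rad(\fa)^\omega$ versus $\rad(\fa^\omega)$ at the end using idempotence of $\rad$, whereas the paper passes $\fa$ through and upgrades to radicals afterward; both handle the same small gap correctly.
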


\begin{proof}
Fix $A \in \Comm(\cC)$ throughout the proof.

Assume \prop{A}. Let $X \in [A]$. Applying \prop{A} with $\fa$ being the ideal generated by $X$ and $\fb=(0)$, we see that $X$ is locally nilpotent if and only if $X^{\omega}$ is. Thus \propn{A}{1} holds.

Now suppose that \propn{A}{1} holds. Then an ideal $\fa$ is locally nilpotent if and only if $\fa^{\omega}$ is; thus $\fa \subset \rad(A)$ if and only if $\fa^{\omega} \subset \rad(A^{\omega})$. Now let $\fa$ and $\fb$ be two ideals of $A$. Let $\ol{\fa}$ be the image of $\fa$ in $\ol{A}=A/\fb$. Then $\ol{\fa} \subset \rad(\ol{A})$ if and only if $\ol{\fa}^{\omega} \subset \rad(\ol{A}{}^{\omega})$, and so $\fa+\fb \subset \rad(\fb)$ if and only if $\fa^{\omega}+\fb^{\omega} \subset \rad(\fb^{\omega})$. It follows that $\fa \subset \rad(\fb)$ if and only if $\fa^{\omega} \subset \rad(\fb^{\omega})$, and so $\rad(\fa) \subset \rad(\fb)$ if and only if $\rad(\fa^{\omega}) \subset \rad(\fb^{\omega})$. Thus \prop{A} holds.

Finally, suppose $X \in [A]$. Then $X$ is locally nilpotent if and only if $X \subset \rad(A)$, and $X^{\omega}$ is locally nilpotent if and only if $X^{\omega} \subset \rad(A^{\omega})$, which in turn is equivalent to $X \subset \lfloor \rad(A^{\omega}) \rfloor$. We thus see that \propn{A}{1} is equivalent to \propn{A}{2}.
\end{proof}

Consider the following property:
\begin{itemize}
\item[\prop{Fin}\label{Fin}] $\omega$ carries finitely generated objects to finitely generated objects.
\end{itemize}

\begin{proposition} \label{prop:FinA}
We have $\prop{Fin} \implies \prop{A}$.
\end{proposition}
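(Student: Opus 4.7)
The plan is to pass through the characterization $\prop{A} \iff \propn{A}{1}$ just established, so it suffices to check that for any $A \in \Comm(\cC)$ and $X \in [A]$, $X$ is locally nilpotent if and only if $X^{\omega}$ is. The forward direction does not require $\prop{Fin}$: if $X \subset \rad(A)$, then by Proposition~\ref{prop:fiber-rad} we have $X^{\omega} \subset (\rad A)^{\omega} \subset \rad(A^{\omega})$.

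For the reverse direction, assume $X^{\omega} \subset \rad(A^{\omega})$. By admissibility, $X = \sum_{Y \in [X]^{\rf}} Y$, and since $\rad(A)$ is an ideal, it is enough to show $Y \subset \rad(A)$ for each finitely generated $Y \in [X]^{\rf}$. Fix such a $Y$. By $\prop{Fin}$, $Y^{\omega}$ is a finitely generated subobject of $A^{\omega}$, and it lies in $\rad(A^{\omega})$ since $Y^{\omega} \subset X^{\omega}$. Applying Proposition~\ref{prop:fg-nilp} inside $A^{\omega}$ yields some $n$ with $(Y^{\omega})^n = 0$. Because $\omega$ is symmetric monoidal, $(Y^n)^{\omega} = (Y^{\omega})^n = 0$, so Proposition~\ref{prop:omega-zero} forces $Y^n = 0$ in $A$. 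Hence $Y \subset \rad(A)$, and summing over $Y \in [X]^{\rf}$ gives $X \subset \rad(A)$, proving $\propn{A}{1}$.

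There is no serious obstacle: the proof is just a short chaining of results already in hand. The key conceptual point is that the hypothesis $\prop{Fin}$ is needed precisely to convert ``$Y^{\omega}$ lies in the radical'' into ``$Y^{\omega}$ is actually nilpotent'' via Proposition~\ref{prop:fg-nilp}; without $\prop{Fin}$ one cannot guarantee that $Y^{\omega}$ remains finitely generated, and an arbitrary subobject of the radical need not satisfy any uniform nilpotence bound. Once nilpotence of $Y^{\omega}$ is secured, faithfulness of $\omega$ (Proposition~\ref{prop:omega-zero}) transports the vanishing back to $\cC$, and admissibility handles the reduction from arbitrary $X$ to finitely generated $Y$.
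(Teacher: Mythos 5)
Your proof is correct and takes essentially the same route as the paper: both pass through the characterization $\prop{A} \iff \propn{A}{1}$, reduce to finitely generated $Y \in [X]^{\rf}$, use \prop{Fin} together with Proposition~\ref{prop:fg-nilp} to get $(Y^{\omega})^n = 0$, and then use faithfulness of $\omega$ to deduce $Y^n = 0$. You merely spell out more explicitly the easy forward direction and the step $(Y^n)^{\omega} = (Y^{\omega})^n$ that the paper leaves implicit.
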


\begin{proof}
Let $A \in \Comm(\cC)$ and let $X \in [A]$ be a subobject with $X^{\omega}$ locally nilpotent. Write $X=\sum_{i \in I} X_i$ where each $X_i$ is finitely generated. Then $X_i^{\omega}$ is finitely generated, by \prop{Fin}, and locally nilpotent, and thus nilpotent (Proposition~\ref{prop:fg-nilp}). Hence $X_i$ is nilpotent too, and so $X$ is locally nilpotent.
\end{proof}

\begin{proposition} \label{prop:compA}
Suppose $\omega \colon \cC \to \cD$ and $\eta \colon \cD \to \cE$ are fiber functors satisfying \prop{A}. Then $\eta \circ \omega$ also satisfies \prop{A}.
\end{proposition}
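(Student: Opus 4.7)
The plan is to chain the two instances of property \prop{A} together. First, I would check that $\eta \circ \omega$ is itself a fiber functor in the sense of Definition~\ref{def:tensor-cat}-compatible; this is essentially routine. Symmetric monoidality of the composition is automatic, and exactness, faithfulness, and cocontinuity pass through compositions. The only condition that needs a brief argument is \prop{PI}: given a family $\{M_i\}_{i \in I}$ in $\cC$, the map $\omega(\prod_i M_i) \to \prod_i \omega(M_i)$ is injective by \prop{PI} for $\omega$, so applying the exact functor $\eta$ gives an injection $\eta\omega(\prod_i M_i) \to \eta(\prod_i \omega(M_i))$; composing with the injection $\eta(\prod_i \omega(M_i)) \to \prod_i \eta\omega(M_i)$ from \prop{PI} for $\eta$ produces the required injection.

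For property \prop{A} itself, fix $A \in \Comm(\cC)$ and ideals $\fa, \fb$ of $A$. The goal is to show
\begin{displaymath}
\rad(\fa) \subset \rad(\fb) \iff \rad(\fa^{\eta\omega}) \subset \rad(\fb^{\eta\omega}).
\end{displaymath}
I would bridge the two sides via $A^{\omega} \in \Comm(\cD)$. Note that $\fa^{\omega}$ and $\fb^{\omega}$ are ideals of $A^{\omega}$: they are subobjects by Proposition~\ref{prop:for-contain}(a), and $A^{\omega} \fa^{\omega} = (A\fa)^{\omega} \subset \fa^{\omega}$ shows they are submodules. Applying \prop{A} for $\omega$ to the pair $(\fa, \fb)$ in $A$ yields
\begin{displaymath}
\rad(\fa) \subset \rad(\fb) \iff \rad(\fa^{\omega}) \subset \rad(\fb^{\omega}),
\end{displaymath}
and applying \prop{A} for $\eta$ to the pair $(\fa^{\omega}, \fb^{\omega})$ in the algebra $A^{\omega}$ of $\cD$ yields
\begin{displaymath}
\rad(\fa^{\omega}) \subset \rad(\fb^{\omega}) \iff \rad((\fa^{\omega})^{\eta}) \subset \rad((\fb^{\omega})^{\eta}).
\end{displaymath}
Since $(\fa^{\omega})^{\eta} = \fa^{\eta\omega}$ and likewise for $\fb$, chaining the two equivalences gives the desired conclusion.

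There is no real obstacle here; the only subtlety is to remember that $\fa^{\omega}$ needs to be interpreted as an ideal of $A^{\omega}$ before \prop{A} for $\eta$ can be invoked, which is handled by the observation above using that $\omega$ is symmetric monoidal. If one prefers, the same argument can be phrased via \propn{A}{1}: local nilpotence of $X \in [A]$ is equivalent to local nilpotence of $X^{\omega}$, which is equivalent to local nilpotence of $(X^{\omega})^{\eta} = X^{\eta\omega}$, giving \propn{A}{1} for the composition directly.
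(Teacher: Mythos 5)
Your proof is correct and follows essentially the same route as the paper; the paper invokes the equivalent reformulation \propn{A}{1} (local nilpotence of $X$ passes through $\omega$, then through $\eta$), which you also observe as an alternative phrasing at the end, while your main argument chains \prop{A} directly at the level of ideal radicals. The extra care you take in verifying that $\eta \circ \omega$ is itself a fiber functor (in particular that \prop{PI} composes) and that $\fa^{\omega}$, $\fb^{\omega}$ are genuinely ideals of $A^{\omega}$ is a detail the paper leaves implicit but is worth spelling out.
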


\begin{proof}
Let $A \in \Comm(\cC)$ and let $X \in [A]$. Then $X$ is locally nilpotent if and only if $X^{\omega}$ is locally nilpotent, since $\omega$ satisfies \propn{A}{1}. Similarly, $X^{\omega}$ is locally nilpotent if and only if $(X^{\omega})^{\eta}$ is locally nilpotent, since $\eta$ satisfies \propn{A}{1}. We thus see that $X$ is locally nilpotent if and only if $(X^{\omega})^{\eta}$ is, and so $\eta \circ \omega$ satisfies \propn{A}{1}, and thus \prop{A}.
\end{proof}

\subsection{Property (A) for generic categories}

Consider the following property on $\omega \colon \cC \to \cD$:
\begin{itemize}
\item[\prop{Gen}\label{Gen}] There exists an integral algebra $R \in \Comm(\cC)$ and an ideal $\fm \subset R^{\omega}$ such that
\begin{enumerate}[label=(\roman*)]
\item the natural map $\bone_{\cD} \to R^{\omega}/\fm$ is an isomorphism
\item the functor $\Mod_R \to \cD$ given by $M \mapsto M^{\omega}/\fm M^{\omega}$ is exact and kills $\Mod_R^{\tors}$
\item the induced functor $\Mod_R^{\gen} \to \cD$ is an equivalence
\end{enumerate}
\end{itemize}
Suppose $\eta \colon \cD \to \cE$ is a second fiber functor. We consider the following property:
\begin{itemize}
\item[\prop{Rad}\label{prop: Rad}] Let $A \in \Comm(\cC)$, let $\fb$ be an ideal of $A^{\omega}$, and let $\fa=\lfloor \fb \rfloor_{\cC}$. Then we have $\lfloor \rad(\fb^{\eta}) \rfloor_{\cC}=\lfloor \rad(\fa^{\omega \eta}) \rfloor_{\cC}$.
\end{itemize}

\begin{example}
Suppose $H \subset G$ are groups, $\omega \colon \Rep(G) \to \Rep(H)$ is the restriction functor, and $\eta \colon \Rep(H) \to \Vec$ is the forgetful functor. Then \prop{Rad} says the following: if $A$ is an algebra on which $G$ acts and $\fb$ is an $H$-stable ideal then $\bigcap_{g \in G} g \rad(\fb) = \rad(\bigcap_{g \in G} g \fb)$, where here $\rad$ means the ordinary radical.
\end{example}

\begin{proposition} \label{prop:Agen}
Suppose \prop{Rad} holds, $\omega$ satisfies \prop{Gen}, and $\eta \circ \omega$ satisfies \prop{A}. Then $\eta$ satisfies \prop{A}.
\end{proposition}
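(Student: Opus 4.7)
The plan is to verify \propn{A}{1} for $\eta$: given $B \in \Comm(\cD)$ and $Y \in [B]$ with $Y^{\eta}$ locally nilpotent, show $Y \subset \rad(B)$. The reverse implication is immediate from Proposition~\ref{prop:fiber-rad}, and replacing $Y$ by the ideal it generates lets me assume $Y$ is an ideal.

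The main device is to pull $(B,Y)$ back through the equivalence supplied by \prop{Gen}. Since $\omega$ is symmetric monoidal and $\Mod_R^{\tors}$ is a tensor ideal, the equivalence $\Mod_R^{\gen} \xrightarrow{\sim} \cD$ is symmetric monoidal, so it induces equivalences on commutative algebras and on ideals. I would therefore pick an $R$-algebra $A \in \Comm(\Mod_R) \subset \Comm(\cC)$ and an $R$-ideal $\fa \subset A$ lifting $B$ and $Y$; concretely $A^{\omega}/\fm A^{\omega} \cong B$ and $\fa^{\omega}/\fm \fa^{\omega} \cong Y$, the latter via exactness of the equivalence (which implies $\fa^{\omega} \cap \fm A^{\omega} = \fm \fa^{\omega}$). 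The hypothesis $Y^{\eta} \subset \rad(B^{\eta})$ then translates to $\rad(\fa^{\omega\eta}) \subset \rad((\fm A^{\omega})^{\eta})$ inside $A^{\omega\eta}$, by the standard radical-preimage computation for the quotient $B^{\eta} = A^{\omega\eta}/(\fm A^{\omega})^{\eta}$.

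Next I apply $\lfloor \cdot \rfloor_{\cC}$ and use \prop{Rad} on the ideal $\fm A^{\omega}$ of $A^{\omega}$ to obtain $\lfloor \rad((\fm A^{\omega})^{\eta}) \rfloor_{\cC} = \lfloor \rad(\fn^{\omega\eta}) \rfloor_{\cC}$, where $\fn = \lfloor \fm A^{\omega} \rfloor_{\cC}$; the analogous statement on the left is automatic since $\lfloor \fa^{\omega} \rfloor_{\cC} = \fa$. Property \propn{A}{2} for $\eta \circ \omega$, applied to the $\cC$-algebras $A/\fa$ and $A/\fn$ and pulled back to $A$, identifies $\lfloor \rad(\fa^{\omega\eta}) \rfloor_{\cC}$ and $\lfloor \rad(\fn^{\omega\eta}) \rfloor_{\cC}$ with $\rad(\fa)$ and $\rad(\fn)$ respectively. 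So we arrive at the $\cC$-level inclusion $\rad(\fa) \subset \rad(\fn)$.

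To finish, for any $X \in [Y]^{\rf}$ I would use that $q \colon \Mod_R \to \Mod_R^{\gen}$ and the equivalence both preserve finite generation to obtain a finitely generated $R$-submodule $\tilde X \subset \fa$ lifting $X$, and then write $\tilde X = RX_0$ with $X_0 \in [A]^{\rf}_{\cC}$. The identification $R^{\omega}/\fm = \bone_{\cD}$ forces the images of $X_0$ and $\tilde X$ in $B$ to coincide, so the image of $X_0$ in $B$ is $X$. Since $X_0 \subset \fa \subset \rad(\fn)$ and $X_0$ is finitely generated in $\cC$, Proposition~\ref{prop:fg-nilp} gives $X_0^k \subset \fn$ for some $k$, whence $(X_0^{\omega})^k \subset \fn^{\omega} \subset \fm A^{\omega}$ and $X^k = 0$ in $B$. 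Thus $Y \subset \rad(B)$. The main obstacle I anticipate is the lifting step: checking that the equivalence of \prop{Gen} is sufficiently compatible with algebra structures, ideal lattices, and finite generation to realize $(B,Y)$ as $(A^{\omega}/\fm A^{\omega}, \fa^{\omega}/\fm\fa^{\omega})$ and to produce $\cC$-finitely generated lifts inside $\fa$ of finitely generated subobjects of $Y$; once that framework is in place, the remainder is a formal chase with \prop{Rad} and \prop{A} for $\eta \circ \omega$.
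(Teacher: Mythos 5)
Your proof is correct and follows the same broad strategy as the paper (lift to $\cC$, apply \prop{Rad} and then \prop{A} for $\eta \circ \omega$), but the final phase differs in a way worth noting. The paper makes one additional choice: it takes the lift $A$ to be $R$-\emph{torsionfree}. This lets it dispatch the ideal $\fn = \lfloor \fm A^\omega \rfloor_{\cC}$ entirely: since $\fn^\omega \subset \fm A^\omega$, the map $\fn^\omega/\fm\fn^\omega \to A^\omega/\fm A^\omega$ vanishes, so by \prop{Gen}(iii) the inclusion $\fn \to A$ is zero in $\Mod_R^{\gen}$, and torsion-freeness then forces $\fn = 0$. With $\fn = 0$ the conclusion $Y \subset \rad(\fn) = \rad(A)$ is immediate and the passage back to $B$ is simply ``a quotient of a locally nilpotent object is locally nilpotent.'' Your version keeps $\fn$ potentially nonzero and closes the loop by producing, for each $X \in [Y]^{\rf}_{\cD}$, a finitely generated $\cC$-lift $X_0 \subset \fa$, extracting $X_0^k \subset \fn$ from Proposition~\ref{prop:fg-nilp}, and using $\fn^\omega \subset \fm A^\omega$ to kill $X^k$ in $B$. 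Both routes are valid, but the paper's torsionfree trick eliminates the extra ideal-chasing and the finitely-generated-lifting step at the end, which is exactly the ``main obstacle'' you flagged. If you incorporate the torsionfree choice when constructing $A$, you can drop the last paragraph of your argument entirely; the only lifting you then need is the one already unavoidable in both proofs, namely lifting $Y$ to an $R$-submodule of $A$ (and not even to an ideal).
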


\begin{proof}
Let $B \in \Comm(\cD)$ and let $X \in [B]$ be a subobject with $X^{\eta}$ locally nilpotent; thus $X \subset \lfloor \rad(B^{\eta}) \rfloor$. Choose a commutative $R$-algebra $A$ in $\cC$ such that $B\cong A^{\omega}/\fm A^{\omega}$ and $A$ is $R$-torsionfree. (One can construct $A$ by lifting a presentation for $B$ and killing torsion.) Let $Y$ be an $R$-submodule of $A$ such that $Y^{\omega}/\fm Y^{\omega}=X$. Let $\fb=\fm A^{\omega}$ and let $\fa=\lfloor \fb \rfloor_{\cC}$. Then $Y^{\omega \eta} \subset \rad(\fb^{\eta})$, and so $Y \subset \lfloor \rad(\fb^{\eta}) \rfloor_{\cC}$. Thus by \prop{Rad}, we have $Y \subset \lfloor \rad(\fa^{\omega \eta}) \rfloor_{\cC}$. Since $\eta \circ \omega$ satisfies \prop{A}, this means $Y \subset \rad(\fa)$. We claim that $\fa=0$. Indeed, we have $\fa^{\omega} \subset \fm A^{\omega}$, and so the map $\fa^{\omega}/\fm \fa^{\omega} \to A/\fm A^{\omega}$ is the zero map. By \prop{Gen}, this means that the map $\fa \to A$ is zero in $\Mod_R^{\gen}$. Since $A$ is $R$-torsionfree, it follows that $\fa=0$, as claimed. We thus see that $Y \subset \rad(A)$, and so $X \subset \rad(B)$, as required.
\end{proof}

\subsection{Generalities on property (B)}

Consider the following property:
\begin{itemize}
\item[(B${}_1$)] Let $A \in \Comm(\cC)$ and let $X,Y \in [A^{\omega}]^{\rf}$ satisfy $XY=0$. Then $\lceil X^n \rceil \lceil Y \rceil=0$ for some $n \ge 1$.
\end{itemize}

\begin{proposition}
Let $A \in \Comm(\cC)$ and let $\fa$ be an ideal of $A$.
\begin{enumerate}
\item Suppose $\omega$ satisfies \prop{A} and $\rad(\fa^{\omega})$ is prime. Then $\rad(\fa)$ is prime.
\item Suppose $\omega$ satisfies \propn{B}{1} and $\rad(\fa)$ is prime. Then $\rad(\fa^{\omega})$ is prime.
\end{enumerate}
\end{proposition}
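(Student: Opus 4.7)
The two parts are largely independent, so I would prove them separately. Both ultimately reduce to standard manipulations with radicals once the correct reformulation is in hand.

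For part (a), the plan is: take $X,Y \in [A]$ with $XY \subset \rad(\fa)$, apply $\omega$ together with Proposition~\ref{prop:fiber-rad} to obtain $X^\omega Y^\omega = (XY)^\omega \subset \rad(\fa^\omega)$, and use the primality of $\rad(\fa^\omega)$ to get (say) $X^\omega \subset \rad(\fa^\omega)$. Then setting $\fc = AX$, we have $\fc^\omega = A^\omega X^\omega \subset \rad(\fa^\omega)$, so $\rad(\fc^\omega) \subset \rad(\fa^\omega)$; property \prop{A} transfers this directly to $\rad(\fc) \subset \rad(\fa)$, which gives $X \subset \fc \subset \rad(\fa)$ as required.

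For part (b), the extra subtlety is that \propn{B}{1} requires the hypothesis $XY=0$, not just $XY$ nilpotent. My plan is to take $X,Y \in [A^\omega]^{\rf}$ with $XY \subset \rad(\fa^\omega)$ and reduce to an honest equation by passing to $\bar A = A/\fa$: in $\bar A^\omega$ we have $\bar X \bar Y \subset \rad(\bar A^\omega)$, and since $\bar X \bar Y$ is finitely generated (Proposition~\ref{prop:multiplication}(c)), Proposition~\ref{prop:fg-nilp} gives $(\bar X \bar Y)^n = \bar X^n \bar Y^n = 0$ for some $n$. Now \propn{B}{1} applied to $\bar X^n$ and $\bar Y^n$ produces some $k \geq 1$ with $U \cdot V = 0$ in $\bar A$, where $U := \lceil \bar X^{nk} \rceil_\cC$ and $V := \lceil \bar Y^n \rceil_\cC$; both are finitely generated by Proposition~\ref{prop:ceil-fg}.

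It remains to exploit the hypothesis on the base side. Since $\rad(\fa)$ is prime in $A$, its image $\rad(\bar A) = \rad(\fa)/\fa$ is prime in $\bar A$, so the relation $UV = 0 \subset \rad(\bar A)$ forces (WLOG) $U \subset \rad(\bar A)$. Finite generation of $U$ plus Proposition~\ref{prop:fg-nilp} then yields $U^m = 0$ in $\bar A$ for some $m$, hence $(U^\omega)^m = 0$ in $\bar A^\omega$, and the containment $\bar X^{nk} \subset U^\omega$ gives $\bar X^{nkm} = (\bar X^{nk})^m \subset (U^\omega)^m = 0$, i.e.\ $X^{nkm} \subset \fa^\omega$ and thus $X \subset \rad(\fa^\omega)$. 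The main obstacle is precisely the reduction from ``nilpotent product'' to ``zero product'' that lets \propn{B}{1} be invoked, and then extracting a nilpotence bound from the finitely generated subobject $U$; beyond that, it is bookkeeping with the $\lceil \cdot \rceil$ construction.
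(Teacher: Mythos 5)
Your proof is correct and follows essentially the same strategy as the paper. Part (a) differs only cosmetically: the paper first passes to $A/\fa$ and then uses \prop{A} in its equivalent form \propn{A}{1} (local nilpotence is detected by $\omega$), whereas you stay in $A$ and invoke \prop{A} as stated by introducing the ideal $\fc=AX$; both are sound. Part (b) matches the paper's argument step by step — pass to $\bar A = A/\fa$, use finite generation and Proposition~\ref{prop:fg-nilp} to upgrade ``$\bar X\bar Y$ locally nilpotent'' to ``$\bar X^n\bar Y^n=0$,'' invoke \propn{B}{1}, exploit primality of $\rad(\bar A)$ together with Proposition~\ref{prop:ceil-fg}, and pull back a nilpotence bound through $\lceil\cdot\rceil^{\omega}$.
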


\begin{proof}
Passing to $A/\fa$, it suffices to treat the case where $\fa=0$.

(a) Let $XY \subset \rad(A)$ with $X$ and $Y$ subobjects of $A$. Then $X^{\omega} Y^{\omega} \subset (\rad{A})^{\omega} \subset \rad(A^{\omega})$, where the second containment comes from Proposition~\ref{prop:fiber-rad}. Since $\rad(A^{\omega})$ is prime, we have $X^{\omega} \subset \rad(A^{\omega})$ or $Y^{\omega} \subset \rad(A^{\omega})$. Suppose the former holds. Then $X^{\omega}$ is locally nilpotent, and so $X$ is locally nilpotent by \prop{A}. Thus $X \subset \rad(A)$, and so $\rad(A)$ is prime.

(b) Let $XY \subset \rad(A^{\omega})$ with $X, Y \in [A^{\omega}]^{\rf}$. Then $X^kY^k=0$ for some $k \ge 1$. By \propn{B}{1}, we find $\lceil X^{nk} \rceil \lceil Y^k \rceil=0$ for some $n \ge 1$. Since $\rad(A)$ is prime, we see that $\lceil X^{nk} \rceil$ or $\lceil Y^k \rceil$ is nilpotent (note that both are fintiely generated by Proposition~\ref{prop:ceil-fg}). Thus either $X$ or $Y$ is nilpotent, and so $X \subset \rad(A^{\omega})$ or $Y \subset \rad(A^{\omega})$. Thus $\rad(A^{\omega})$ is prime.
\end{proof}

\begin{corollary}
We have $\prop{A} \land \propn{B}{1} \implies \prop{B}$.
\end{corollary}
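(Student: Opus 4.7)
The plan is essentially bookkeeping: the corollary is an immediate combination of the two halves of the preceding proposition. Property \prop{B} is a biconditional statement, and the preceding proposition proves each direction under one of the two hypotheses.

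More precisely, I would argue as follows. Fix $A \in \Comm(\cC)$ and an ideal $\fa$ of $A$; I need to show $\rad(\fa)$ is prime if and only if $\rad(\fa^{\omega})$ is prime. For the ``if'' direction, assume $\rad(\fa^{\omega})$ is prime. Since $\omega$ satisfies \prop{A} by hypothesis, part (a) of the preceding proposition applies and yields that $\rad(\fa)$ is prime. For the ``only if'' direction, assume $\rad(\fa)$ is prime. Since $\omega$ satisfies \propn{B}{1} by hypothesis, part (b) of the preceding proposition applies and yields that $\rad(\fa^{\omega})$ is prime. Combining both directions gives \prop{B}.

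There is no real obstacle here — the content sits entirely in the preceding proposition, whose two parts were designed precisely so that they combine to give this corollary. The only remark worth making is that the two halves are genuinely asymmetric in their hypotheses: the implication from $\rad(\fa^{\omega})$ prime to $\rad(\fa)$ prime uses \prop{A} to promote local nilpotence downstairs to local nilpotence upstairs, while the reverse implication uses \propn{B}{1} to convert the relation $XY=0$ in $A^{\omega}$ (arising from $XY \subset \rad(A^{\omega})$ after passing to some power) into a relation $\lceil X^n \rceil \lceil Y \rceil = 0$ between objects in $\cC$-span, so that primality of $\rad(A)$ can be applied. Both ingredients are needed, and neither alone suffices.
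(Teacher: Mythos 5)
Your proposal is correct and is exactly the argument the paper intends: the corollary is a direct combination of parts (a) and (b) of the preceding proposition, one part for each direction of the biconditional in \prop{B}. Nothing further is needed.
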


\section{Abstract results for Lie algebras} \label{s:lie}

\subsection{Representations of Lie algebras in tensor categories}

Let $\cV$ be an admissible tensor category and let $\fg$ be a Lie algebra in $\cV$. We can then consider the category $\Rep(\fg)$ of representations of $\fg$ in $\cV$. This admits a natural tensor product, and is a tensor category in the sense of Definition~\ref{def:tensor-cat}. (It is not necessarily admissible, see Example~\ref{ex:inadmissible}.) We are typically interested in subcategories of $\Rep(\fg)$. Since we want to work with admissible categories, we introduce the following notion:

\begin{definition} \label{def:admissible}
An \defn{admissible subcategory} of $\Rep(\fg)$ is a full subcategory $\cC$ satisfying the following conditions:
\begin{enumerate}
\item If $M \in \cC$ then any subquotient of $M$ in $\Rep(\fg)$ belongs to $\cC$.
\item $\cC$ is closed under arbitrary direct sums.
\item $\cC$ is closed under tensor products.
\item If $M$ and $N$ are finitely generated $\fg$-modules that belong to $\cC$ then $M \otimes N$ is finitely generated as a $\fg$-module. \qedhere
\end{enumerate}
\end{definition}

\begin{proposition}
Let $\cC$ be an admissible subcategory of $\Rep(\fg)$. Then $\cC$ (with the induced tensor product) is an admissible tensor category. Furthermore, an object of $\cC$ is finitely generated if and only if it finitely generated as a $\fg$-module.
\end{proposition}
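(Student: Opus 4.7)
The plan is to verify the three admissibility axioms together with the Grothendieck abelian structure, and then to derive the characterization of finite generation. The key observation that drives the entire argument is that since $\cC$ is closed under subquotients in $\Rep(\fg)$, the set $[M]_\cC$ of $\cC$-subobjects of any $M \in \cC$ coincides with the set of $\fg$-submodules of $M$. Once this identification is in hand, all remaining properties transfer from $\Rep(\fg)$ and $\cV$.

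First I would check that $\cC$ is a Grothendieck abelian category: closure under subquotients forces kernels and cokernels of $\cC$-morphisms to remain in $\cC$, making $\cC$ an abelian subcategory of $\Rep(\fg)$; closure under arbitrary direct sums supplies coproducts; filtered colimits are exact because they can be built from direct sums and cokernels (both preserved) and are exact in the ambient $\Rep(\fg)$. The induced symmetric monoidal structure makes $\cC$ a tensor category: condition~(iii) gives closure under~$\otimes$, the unit $\bone$ of $\Rep(\fg)$ (the unit of $\cV$ with trivial $\fg$-action) lies in $\cC$, and cocontinuity of $\otimes$ in each variable is inherited from $\Rep(\fg)$ since the relevant colimits already lie in $\cC$.

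Next I would establish the equivalence ``FG in $\cC$ $\iff$ FG as $\fg$-module.'' Using the identification $[M]_\cC = \{\fg\text{-submodules of } M\}$, a family $\{N_i\}$ of $\cC$-subobjects summing to $M$ is the same thing as a family of $\fg$-submodules summing to $M$, and the existence of a finite subsum equalling $M$ is the same condition in both settings. With this, the three admissibility axioms follow in turn. For the unit: $\bone$ is FG in $\cV$, and its $\cV$-subobjects coincide with its $\fg$-submodules because the action is trivial, so $\bone$ is FG as a $\fg$-module and hence FG in $\cC$. For the sum decomposition: any $M \in \cC$ equals $\sum X$ over FG $\cV$-subobjects $X \subset M$ by admissibility of $\cV$; the $\fg$-submodule $\lceil X \rceil_{\Rep(\fg)}$ generated by each such $X$ is FG as a $\fg$-module and lies in $\cC$ (being a subobject of $M$), so $M = \sum_X \lceil X \rceil$ exhibits $M$ as a sum of FG $\cC$-subobjects. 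The closure of FG objects under $\otimes$ is exactly condition~(iv) of Definition~\ref{def:admissible}.

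The main obstacle is purely organizational: one must carefully distinguish three notions of subobject (in $\cV$, in $\Rep(\fg)$, in $\cC$) and the three corresponding notions of finite generation, and verify that under the running hypotheses they collapse in the expected way. Once the identification $[M]_\cC = \{\fg\text{-submodules of }M\}$ is in place, nothing deeper is required and the rest is bookkeeping.
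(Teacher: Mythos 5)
Your proposal follows the paper's strategy for the admissibility axioms and the finite-generation characterization, and is in places a bit more explicit: you spell out why the unit is finitely generated (trivial action forces $\cV$-subobjects to coincide with $\fg$-submodules) and you construct the sum decomposition $M=\sum_X \lceil X\rceil$ directly, where the paper simply cites the corresponding fact for $\Rep(\fg)$. The key identification of $\cC$-subobjects with $\fg$-submodules is exactly the paper's driving observation.

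There is, however, a genuine gap in your verification that $\cC$ is a Grothendieck abelian category: you check abelian, cocomplete, and AB5, but you never establish that $\cC$ \emph{has a generator}. This is not a formality and cannot be inherited for free, since a generator for $\cV$ or for $\Rep(\fg)$ need not lie in $\cC$. The paper spends the bulk of its proof precisely on this point: take a generator $E$ of $\cV$, form $\cU(\fg)\otimes E$, and show that the set of $\fg$-module quotients of $\cU(\fg)\otimes E$ lying in $\cC$ is a generating family --- the point being that any map $E\to M$ in $\cV$ with image not contained in a proper subobject $K$ induces a $\fg$-map $\cU(\fg)\otimes E\to M$ with the same property, and the image of this $\fg$-map belongs to $\cC$ by closure under subquotients. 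Without this (or some substitute), the claim that $\cC$ is a tensor category in the sense of Definition~\ref{def:tensor-cat} is unproven.
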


\begin{proof}
By~(a) and (b), $\cC$ is an abelian subcategory of $\Rep(\fg)$, and is cocomplete; furthermore, colimits in $\cC$ can be computed in $\Rep(\fg)$, and so filtered colimits in $\cC$ are exact. Let $E$ be a generator for $\cC$, and let $\{F_i\}_{i \in I}$ be the set of $\fg$-module quotients of $\cU(\fg) \otimes E$ that belong to $\cC$. We claim that $\{F_i\}$ is a generating set for $\cC$. Let $M$ be an object of $\cC$ and let $K$ be a proper subobject. We can then find a map $E \to M$ in $\cC$ with image not contained in $K$. We thus get a map $\cU(\fg) \otimes E \to M$ of $\fg$-modules with image not contained in $K$. The image of this map is isomorphic to some $F_i$. Thus we have a map $F_i \to M$ with image not contained in $K$, which verifies the claim. We thus see that $\cC$ is a Grothendieck abelian category. Since colimits and tensor products in $\cC$ can be computed in $\Rep(\fg)$, it follows that $\otimes$ is cocontinuous in each variable on $\cC$. Thus $\cC$ is a tensor category.

Let $M$ be an object of $\cC$. By (a), the subobjects of $M$ in $\cC$ are the same as the subobjects of $M$ in $\Rep(\fg)$. Since finite generation is defined in terms of subobjects, we see that $M$ is finitely generated in $\cC$ if and only if it is finitely generated in $\Rep(\fg)$. Since $M$ is the sum of its finitely generated subobjects in $\Rep(\fg)$, it follows that the same holds in $\cC$. By (d), we see that the tensor product of two finitely generated subobjects of $\cC$ is again finitely generated. We thus see that $\cC$ is an admissible tensor category.
\end{proof}

\begin{proposition}
Let $\cC$ be an admissible subcategory of $\Rep(\fg)$. Then the inclusion $\cC \to \Rep(\fg)$ satisfies \prop{PI}.
\end{proposition}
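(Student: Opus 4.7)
The plan is to show that for any family $\{M_i\}_{i \in I}$ in $\cC$, the product of the $M_i$ computed inside $\cC$ embeds naturally into the product computed inside $\Rep(\fg)$. This directly matches \prop{PI} because the inclusion $\omega \colon \cC \to \Rep(\fg)$ sends the product in $\cC$ to an object of $\Rep(\fg)$, and the natural comparison map is precisely the one demanded by the universal property of the product in $\Rep(\fg)$.

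First I would set $Q = \prod_{i \in I} M_i$ computed in $\Rep(\fg)$. The set of subobjects of $Q$ that happen to lie in $\cC$ is a genuine set (Grothendieck categories are well-powered), and I would let $P \subset Q$ be the sum of all such subobjects. By axiom (b) the direct sum of these subobjects lies in $\cC$, and by axiom (a) the quotient map onto their sum inside $Q$ again lands in $\cC$; hence $P$ itself lies in $\cC$. By construction $P$ is the largest subobject of $Q$ belonging to $\cC$.

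Next I would verify that $P$, equipped with the restrictions $P \to M_i$ of the projection maps $Q \to M_i$, is the product of the $M_i$ in $\cC$. Given $N \in \cC$ and morphisms $f_i \colon N \to M_i$ in $\cC$ (which are the same as morphisms in $\Rep(\fg)$ by fullness), the universal property of $Q$ inside $\Rep(\fg)$ produces a unique map $f \colon N \to Q$. The image of $f$ is a quotient of $N$, so by axiom (a) it belongs to $\cC$, hence sits inside the maximal such subobject $P$; thus $f$ factors uniquely through $P$. This confirms $P$ is the product in $\cC$, and the natural map $\omega(P) \to Q$ is the inclusion, which is manifestly injective.

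There is no real obstacle here; the only thing to be careful about is the possible discrepancy between products in $\cC$ and products in $\Rep(\fg)$, since $\cC$ need not be closed under arbitrary limits. The argument is exactly that this discrepancy is always resolved by an honest inclusion. I would not attempt to prove the stronger statement that $\omega$ commutes with products, since it generally does not — the whole point of \prop{PI} is to capture the weaker injectivity that does hold.
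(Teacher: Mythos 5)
Your proof is correct and follows essentially the same route as the paper: the paper observes that the inclusion $i \colon \cC \to \Rep(\fg)$ has a right adjoint $j$ sending a $\fg$-module to its maximal submodule lying in $\cC$, so $j$ preserves products and $\prod^{\cC} M_i = j(\prod M_i)$ is by construction a subobject of $\prod M_i$. You construct this same maximal subobject $P$ explicitly and verify the universal property by hand instead of invoking the adjunction, which is a more self-contained phrasing of the identical idea.
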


\begin{proof}
Let $i \colon \cC \to \Rep(\fg)$ be the inclusion functor and let $j \colon \Rep(\fg) \to \cC$ be the functor assigning to each $\fg$-module the maximum submodule that belongs to $\cC$. Then $i$ is left adjoint to $j$. (This shows that $\cC$ is a ``mono-coreflexive'' subcategory of $\Rep(\fg$).) Let $\{M_i\}_{i \in I}$ be a family of objects in $\cC$. Let $\prod M_i$ be their product in $\Rep(\fg)$ and $\prod^{\cC} M_i$ their product in $\cC$. Since $j$ is is continuous, it preserves products, and so $\prod^{\cC} M_i=j(\prod M_i)$. We thus see that $\prod^{\cC} M_i$ is a subobject of $\prod M_i$ in $\Rep(\fg)$, and so the natural map $\prod^{\cC} M_i \to \prod M_i$ is injective.
\end{proof}

\begin{definition}
Let $\fh$ be a Lie subalgebra of $\fg$. Let $\cC \subset \Rep(\fg)$ and $\cD \subset \Rep(\fh)$ be admissible subcategories. We say that $\cC$ and $\cD$ are \defn{compatible} if the restriction functor $\Rep(\fg) \to \Rep(\fh)$ carries $\cC$ into $\cD$.
\end{definition}

\begin{proposition}
Let $\fh \subset \fg$ be Lie algebras and let $\cC \subset \Rep(\fg)$ and $\cD \subset \Rep(\fh)$ be compatible admissible subcategories. Then the restriction functor $\omega \colon \cC \to \cD$ is a fiber functor.
\end{proposition}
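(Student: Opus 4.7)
The plan is to factor $\omega$ as the composition
\begin{displaymath}
\cC \hookrightarrow \Rep(\fg) \xrightarrow{\tilde{\omega}} \Rep(\fh),
\end{displaymath}
whose image lies in $\cD$ by the compatibility hypothesis. The ``global'' restriction $\tilde{\omega}$ acts as the identity on underlying objects of $\cV$ and merely forgets part of the Lie action. Because direct sums, direct products, kernels, cokernels, and tensor products in both $\Rep(\fg)$ and $\Rep(\fh)$ are all computed at the level of $\cV$ (products in $\Rep(\fg)$ arise by equipping the $\cV$-product $\prod M_i$ with the componentwise action coming from the canonical map $\fg \otimes \prod M_i \to \prod (\fg \otimes M_i)$), it is immediate that $\tilde{\omega}$ is symmetric monoidal, exact, faithful, cocontinuous, and --- crucially --- continuous.

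I would then transfer these properties across the inclusions $i_{\cC} \colon \cC \hookrightarrow \Rep(\fg)$ and $i_{\cD} \colon \cD \hookrightarrow \Rep(\fh)$. Axiom (a) of Definition~\ref{def:admissible} implies that $i_{\cC}$ and $i_{\cD}$ preserve kernels and cokernels, and the proof of the previous proposition shows that arbitrary direct sums and tensor products in $\cC$ and $\cD$ agree with those in the ambient representation categories. Combining this with the properties of $\tilde{\omega}$ gives at once that $\omega$ is symmetric monoidal, exact, faithful (since $i_{\cC}$ is fully faithful and $\tilde{\omega}$ is faithful), and cocontinuous.

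The only substantive step is \prop{PI}, which I expect to be the main obstacle since products in an admissible subcategory are in general \emph{not} computed in the ambient representation category. Given a family $\{M_i\}_{i \in I}$ in $\cC$, the preceding proposition furnishes an injection $\prod^{\cC}_i M_i \hookrightarrow \prod^{\Rep(\fg)}_i M_i$. Applying the exact, continuous functor $\tilde{\omega}$ and identifying $\tilde{\omega}(\prod^{\Rep(\fg)}_i M_i)$ with $\prod^{\Rep(\fh)}_i \omega(M_i)$ yields an injection
\begin{displaymath}
\omega\bigl(\textstyle\prod^{\cC}_i M_i\bigr) \hookrightarrow \textstyle\prod^{\Rep(\fh)}_i \omega(M_i).
\end{displaymath}
By the same preceding proposition applied to $\cD$, the analogous map $\prod^{\cD}_i \omega(M_i) \hookrightarrow \prod^{\Rep(\fh)}_i \omega(M_i)$ is also injective. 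The canonical map $\omega(\prod^{\cC}_i M_i) \to \prod^{\cD}_i \omega(M_i)$, composed with this second injection, must recover the first by the uniqueness clause in the universal property of the product, and hence must itself be injective. This establishes \prop{PI} and completes the proof.
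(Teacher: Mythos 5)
Your proof is correct and follows essentially the same approach as the paper: the easy properties (exact, faithful, cocontinuous, symmetric monoidal) are transferred through the observation that colimits and tensor products in $\cC$ and $\cD$ are computed in $\cV$, and \prop{PI} is established by factoring the natural map $\omega(\prod^{\cC} M_i) \to \prod^{\cD}\omega(M_i)$ through a product in an ambient category, then invoking the preceding proposition that admissible inclusions satisfy \prop{PI}. The only cosmetic difference is that the paper's triangle maps both $\prod^{\cC}$ and $\prod^{\cD}$ directly into $\prod^{\cV}$, whereas you route through $\prod^{\Rep(\fg)}$ and $\prod^{\Rep(\fh)}$; this makes the continuity of the global restriction $\tilde\omega$ an explicit intermediate step, but the underlying argument is the same.
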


\begin{proof}
It is clear that $\omega$ is exact and faithful. Since direct sums in $\cC$ and $\cD$ can be computed in the underlying category $\cV$, it follows that $\omega$ is compatible with direct sums, and therefore cocontinuous. Let $\{M_i\}_{i \in I}$ be a family of objects in $\cC$. Then we have the following commutative triangle in $\cV$
\begin{displaymath}
\xymatrix{
& \prod^{\cV} M_i \\
\prod^{\cC} M_i \ar[ru] \ar[rr] && \prod^{\cD} M_i \ar[lu] }
\end{displaymath}
where the superscripts denote the category in which the product is formed. By the definition of admissible subcategory, the vertical morphisms are injective. It follows that the horizontal morphism is injective, and so $\omega$ satisfies \prop{PI}. We thus see that $\omega$ satisfies \prop{For}. Since tensor products in $\cC$ and $\cD$ are computed in $\cV$, it follows that $\omega$ is naturally a symmetric monoidal functor. Thus $\omega$ is a fiber functor.
\end{proof}

Taking $\fh=0$ and $\cD=\cV$, we obtain the following:

\begin{corollary}
Let $\cC \subset \Rep(\fg)$ be an admissible subcategory. Then the forgetful functor $\cC \to \cV$ is a fiber functor.
\end{corollary}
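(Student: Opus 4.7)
The plan is to derive this corollary as a direct instance of the preceding proposition by specializing to the trivial Lie subalgebra $\fh = 0$. First, I would note the canonical identification $\Rep(0) = \cV$: a representation of the zero Lie algebra on an object of $\cV$ is just the object itself, so the forgetful functor $\cC \to \cV$ appearing in the corollary is literally the restriction functor from $\Rep(\fg)$ to $\Rep(0)$ applied to $\cC$.

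Second, I would check that $\cV$, viewed as a subcategory of $\Rep(0)$, qualifies as an admissible subcategory in the sense of Definition~\ref{def:admissible}. Conditions (a), (b), (c) hold immediately because $\cV$ is closed under subquotients, direct sums, and tensor products within itself. Condition (d) — that the tensor product of two finitely generated objects is again finitely generated — is exactly the third bullet in the definition of an admissible tensor category, which we are assuming of $\cV$ from the outset.

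Third, compatibility of $\cC$ and $\cV = \Rep(0)$ is automatic: the restriction functor $\Rep(\fg) \to \Rep(0)$ is simply the forgetful functor to $\cV$, and it carries $\cC$ to $\cV$ tautologically since every object of $\cC$ is an object of $\cV$ equipped with extra structure. With all hypotheses of the preceding proposition verified, invoking it yields the claim.

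There is no substantive obstacle here; the corollary is essentially a restatement of the previous proposition in the degenerate case $\fh = 0$, and the only thing one must be careful about is the (trivial) verification that $\cV$ itself is admissible as a subcategory of $\Rep(0)$, which reduces to the standing admissibility hypothesis on the ambient tensor category.
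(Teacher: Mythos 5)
Your proposal is correct and matches the paper's approach exactly: the paper states immediately before the corollary that it follows by taking $\fh = 0$ and $\cD = \cV$ in the preceding proposition, which is precisely what you do. Your additional verification that $\cV$ is admissible as a subcategory of $\Rep(0)$ is a sensible sanity check that the paper leaves implicit.
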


\begin{definition}
Let $\cC$ be an admissible subcategory of $\Rep(\fg)$. We say that $\cC$ satisfies \prop{A} or \prop{B} (or any other property of fiber functors) if the forgetful functor $\cC \to \cV$ does.
\end{definition}

\begin{proposition}
Let $\fh \subset \fg$ be Lie algebras and let $\cC \subset \Rep(\fg)$ and $\cD \subset \Rep(\fh)$ be compatible admissible subcategories. Let $M \in \cC$ and let $N$ be an $\fh$-submodule of $M$.
\begin{enumerate}
\item We have $\lceil N \rceil = \cU(\fg) N$.
\item $\lfloor N \rfloor$ is the sum of all $\fg$-submodules of $M$ contained in $N$. If $X$ is a subobject of $M$ then $X \subset \lfloor N \rfloor$ if and only if $\cU(\fg) X \subset N$.
\end{enumerate}
\end{proposition}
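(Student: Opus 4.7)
The plan is to reduce both identities to one elementary observation: $\cU(\fg)N$ is the smallest $\fg$-submodule of $M$ containing $N$, and this submodule automatically lies in $\cC$ because $M$ does and $\cC$ is closed under subquotients in $\Rep(\fg)$ by condition~(a) of Definition~\ref{def:admissible}. Once this is in place, both statements follow by matching $\cU(\fg)N$ (or its analogue for the reverse construction) against the universal properties defining $\lceil \cdot \rceil$ and $\lfloor \cdot \rfloor$ from Proposition~\ref{prop:min-max}.

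For part (a), first I would observe that $\cU(\fg)N$ is a $\fg$-submodule of $M$ containing $N$ and, as a subquotient of $M$ in $\Rep(\fg)$, lies in $\cC$. Since its restriction to $\fh$ contains $N$, the minimality clause in the definition of $\lceil N \rceil$ gives $\lceil N \rceil \subset \cU(\fg)N$. Conversely, as noted in the proof that admissible subcategories are admissible tensor categories, subobjects of $M$ in $\cC$ coincide with $\fg$-submodules of $M$; thus $\lceil N \rceil$ is itself a $\fg$-submodule of $M$ whose underlying $\fh$-subobject contains $N$, so applying $\cU(\fg)$ (which fixes any $\fg$-submodule) yields $\cU(\fg)N \subset \cU(\fg)\lceil N \rceil = \lceil N \rceil$.

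For part (b), the first sentence is essentially definitional: $\lfloor N \rfloor$ is the maximal $\cC$-subobject of $M$ whose underlying $\fh$-subobject lies in $N$, and $\cC$-subobjects of $M$ are exactly $\fg$-submodules of $M$, so $\lfloor N \rfloor$ is the sum of all $\fg$-submodules of $M$ contained in $N$. For the criterion, the ``only if'' direction follows by applying $\cU(\fg)$ to the inclusion $X \subset \lfloor N \rfloor$ and using $\fg$-stability of $\lfloor N \rfloor$ to conclude $\cU(\fg)X \subset \lfloor N \rfloor \subset N$. For ``if'', assuming $\cU(\fg)X \subset N$, the submodule $\cU(\fg)X$ is a $\fg$-submodule of $M$ contained in $N$, so by the first part of (b) it is contained in $\lfloor N \rfloor$, and hence so is $X$.

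No real obstacle is anticipated: the statement amounts to unpacking the categorical definitions of $\lceil \cdot \rceil$ and $\lfloor \cdot \rfloor$ in the concrete setting where $\omega$ is the restriction functor, and the only point that requires any attention is the identification, via admissibility, of $\cC$-subobjects of $M$ with $\fg$-submodules of $M$.
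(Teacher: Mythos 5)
Your proof is correct and follows essentially the same approach as the paper: both directions of (a) are obtained by matching $\cU(\fg)N$ against the minimality defining $\lceil N\rceil$, and both directions of the criterion in (b) use $\fg$-stability of $\lfloor N\rfloor$ and the observation that $\cU(\fg)X$ is a $\fg$-submodule contained in $N$. The only (harmless) extra step you take is to spell out explicitly that $\cU(\fg)N$ lies in $\cC$ and that $\cC$-subobjects of $M$ are exactly $\fg$-submodules; the paper leaves these implicit.
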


\begin{proof}
(a) We have $N \subset \cU(\fg) N$, and so $\lceil N \rceil \subset \cU(\fg) N$. If $K$ is any $\fg$-submodule of $M$ containing $N$ then $K$ contains $\cU(\fg) N$; thus $\cU(\fg) N \subset \lceil N \rceil$.

(b) The first statement is exactly how we constructed $\lfloor N \rfloor$. As for the second, if $X$ is contained in $\lfloor N \rfloor$ then $\cU(\fg) X \subset \lfloor N \rfloor \subset N$, the first inclusion coming from the fact that $\lfloor N \rfloor$ is a $\fg$-submodule. Conversely, if $\cU(\fg) X \subset N$ then $\cU(\fg) X$ is a $\fg$-submodule of $M$ contained in $N$, and so $X \subset \cU(\fg) X \subset \lfloor N \rfloor$.
\end{proof}

\begin{example} \label{ex:inadmissible}
Let $k$ be a field and let $\cV=\Vec_k$ be the category of vector spaces. Let $\fg=k$ be the one-dimensional abelian Lie algebra over $k$. Then $\cU(\fg)=k[x]$, and so $\fg$-modules are the same as $k[x]$-modules. Of course, $\cU(\fg)$ is a finitely generated module, but $\cU(\fg) \otimes \cU(\fg) \cong k[x,y]$ is not. Thus $\Rep(\fg)$ is not admissible.
\end{example}

\subsection{Property (A)} \label{ss:propA}

Let $\fh \subset \fg$ be Lie algebras in $\cV$. Let $\cC \subset \Rep(\fg)$ and $\cD \subset \Rep(\fh)$ be compatible admissible subcategories, let $\omega \colon \cC \to \cD$ be the restriction functor, and let $\eta \colon \cD \to \cV$ be the forgetful functor. We suppose the following condition holds:
\begin{itemize}[leftmargin=4em]
\item[\prop{Stab}\label{prop: Stab}] Let $M \in \cC$ and let $X \subset M$ be a finitely generated $\cV$-subobject. Then there exists a Lie subalgebra $\fp$ of $\fg$ such that $\fg=\fp+\fh$, and the $\fp$-submodule of $M$ generated by $X$ is finitely generated as a $\cV$-object.
\end{itemize}
In the above condition, the algebra $\fp$ can be thought of as an ``approximate stabilizer,'' and so the condition can be intrepreted as saying that elements of $M$ have sufficiently large stabilizer relative to $\fh$.

\begin{proposition}
We have $\prop{Stab} \implies \prop{Rad}$.
\end{proposition}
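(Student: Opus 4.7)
The plan is to reduce to the nontrivial containment $\lfloor \rad(\fb^{\eta}) \rfloor_{\cC} \subset \lfloor \rad(\fa^{\omega\eta}) \rfloor_{\cC}$; the reverse containment is immediate from $\fa^{\omega\eta} \subset \fb^{\eta}$, which yields $\rad(\fa^{\omega\eta}) \subset \rad(\fb^{\eta})$ and hence the corresponding containment after applying $\lfloor - \rfloor_{\cC}$. Setting $Y = \lfloor \rad(\fb^{\eta}) \rfloor_{\cC}$, a $\fg$-subobject of $A$, the desired containment $Y \subset \lfloor \rad(\fa^{\omega\eta}) \rfloor_{\cC}$ is equivalent to $Y^{\omega\eta} \subset \rad(\fa^{\omega\eta})$. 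By admissibility of $\cV$, it suffices to fix an arbitrary finitely generated $\cV$-subobject $W \subset Y^{\omega\eta}$ and, in view of Proposition~\ref{prop:fg-nilp}, to exhibit some $M \geq 1$ with $W^M \subset \fa^{\omega\eta}$.

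To produce such an $M$, I would apply \prop{Stab} to $W$, regarded as a finitely generated $\cV$-subobject of $A \in \cC$. This gives a Lie subalgebra $\fp$ of $\fg$ with $\fg = \fp + \fh$ such that $W' := \cU(\fp) W$ is finitely generated in $\cV$. Since $Y$ is $\fg$-stable (and so $\fp$-stable) and contains $W$, we have $W' \subset Y^{\omega\eta} \subset \rad(\fb^{\eta})$. Proposition~\ref{prop:fg-nilp}, applied inside the $\cV$-algebra $A^{\omega\eta}$, then yields an integer $M$ with $(W')^M \subset \fb^{\eta}$. Because $\fp$ acts on $A$ by derivations and $W'$ is $\fp$-stable, $(W')^M$ is again $\fp$-stable.

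The final step is to enlarge $(W')^M$ to a $\fg$-subobject that still lies inside $\fb$, which by the defining property of $\fa = \lfloor \fb \rfloor_{\cC}$ will force it into $\fa$. I set $Z := \cU(\fh) \cdot (W')^M$. Then $Z$ is $\fh$-stable by construction and contained in $\fb^{\eta}$ because $\fb$ is $\fh$-stable and $(W')^M \subset \fb^{\eta}$. The crucial point is that $Z$ is also $\fp$-stable, and hence $\fg$-stable. I would prove this by induction on $n$ for the length filtration $V_n := \cU(\fh)^{\leq n}(W')^M$: the base case is the $\fp$-stability of $(W')^M = V_0$, and the inductive step uses the commutation $\fp \cdot \fh = \fh \cdot \fp + [\fp, \fh]$ together with $[\fp, \fh] \subset \fg = \fp + \fh$ to rewrite $\fp V_n \subset \fh V_{n-1} + V_{n-1} + \fp V_{n-1} \subset V_n$. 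Granting $\fg$-stability, $Z$ is a $\fg$-subobject of $A$ contained in $\fb$, so $Z \subset \fa$, and therefore $W^M \subset (W')^M \subset Z \subset \fa^{\omega\eta}$, as required.

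The main obstacle is the verification that $\cU(\fh)\cdot(W')^M$ is $\fp$-stable: this is the PBW-style step that makes essential use of both features provided by \prop{Stab}, namely that $\fp$ is itself a Lie subalgebra (so that $\cU(\fp)$ is a subalgebra and $(W')^M$ is $\fp$-stable) and that $\fg = \fp + \fh$ (so that the $\fp$-action can be pushed past $\cU(\fh)$ via the bracket).
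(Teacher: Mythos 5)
Your proof is correct and follows essentially the same route as the paper's: apply \prop{Stab} to produce a $\cV$-finite $\fp$-stable subobject, take a power landing in $\fb$, and observe that applying $\cU(\fh)$ produces a $\fg$-stable subobject of $\fb$ (hence of $\fa$). The only difference is that you re-derive the $\fp$-stability of $\cU(\fh)\cdot(W')^M$ by an explicit induction on the length filtration, whereas the paper shortcuts this by invoking the PBW-type factorization $\cU(\fg)=\cU(\fh)\,\cU(\fp)$ coming from $\fg=\fh+\fp$, which immediately gives $\cU(\fh)(W')^M=\cU(\fh)\cU(\fp)(W')^M=\cU(\fg)(W')^M$.
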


\begin{proof}
Let $A \in \Comm(\cC)$; we think of $A$ as a commutative algebra in $\cV$ on which $\fg$ acts by derivations. Let $\fb$ be an ideal of $A^{\omega}$, i.e., an ideal of $A$ that if $\fh$-stable, and let $\fa=\lfloor \fb \rfloor$. Thus $\fa$ is the maximal $\fg$-submodule of $\fb$. We must show that $\lfloor \rad(\fb) \rfloor_{\cC}=\lfloor \rad(\fa) \rfloor_{\cC}$, where on both sides $\rad$ is computed in $\cV$. Explicitly, this means that if $V$ is a $\cC$-subobject of $A$ then $V \subset \rad(\fa)$ if and only if $\cU(\fg) V \subset \rad(\fb)$. Of course, it suffices to treat the case where $V$ is finitely generated.

First suppose that $V \subset \rad(\fa)$. Then $\cU(\fg) V \subset \rad(\fa)$ since $\rad(\fa)$ is $\fg$-stable. Since $\rad(\fa) \subset \rad(\fb)$, we see that $\cU(\fg) V \subset \rad(\fb)$, as required.

Now suppose that $\cU(\fg) V \subset \rad(\fb)$, with $V$ finitely generated. Per \prop{Stab}, write $\fg=\fh+\fp$ where $\fp$ is a Lie subalgebra of $\fg$ such that $W=\cU(\fp) V$ is $\cV$-finite. Since $W \subset \cU(\fg) V \subset \rad(\fb)$ and $W$ is $\cV$-finite, we have $W^n \subset \fb$ for some $n$. We thus find $\cU(\fp) W^n \subset \fb$, since $W^n$ is $\fp$-stable, and so $\cU(\fh) \cU(\fp) W^n \subset \fb$, since $\fb$ is $\fh$-stable. Since $\cU(\fg)=\cU(\fh) \cU(\fp)$, we see that $\cU(\fg) W^n \subset \fb$, and so $W^n \subset \fa$, and so $W \subset \rad(\fa)$. Since $V \subset W$, we find $V \subset \rad(\fa)$, which completes the proof.
\end{proof}

\begin{corollary} \label{cor:StabA}
Working in the above setting, suppose that $\cC$ satisfies \prop{A}, $\omega$ satisfies \prop{Gen}, and \prop{Stab} holds. Then $\cD$ satisfies \prop{A}.
\end{corollary}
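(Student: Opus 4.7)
The plan is to observe that the corollary is a direct assembly of Proposition~\ref{prop:Agen} with the immediately preceding proposition (\prop{Stab}~$\implies$~\prop{Rad}); the real content has already been extracted, and the corollary just checks that the hypotheses of Proposition~\ref{prop:Agen} are all in hand.

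First I would unpack what each assumption buys us. The preceding proposition gives \prop{Rad} for the pair $(\omega,\eta)$ directly from \prop{Stab}. The hypothesis that $\cC$ satisfies \prop{A} is, by definition, the statement that the forgetful functor $\cC \to \cV$ satisfies \prop{A}; but this forgetful functor factors as $\eta \circ \omega$, so \prop{A} holds for $\eta \circ \omega$. And $\omega$ itself satisfies \prop{Gen} by hypothesis. These are exactly the three inputs required by Proposition~\ref{prop:Agen}.

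With all hypotheses verified, a single invocation of Proposition~\ref{prop:Agen} yields that $\eta$ satisfies \prop{A}, which is by definition the statement that $\cD$ satisfies \prop{A}. Since both nontrivial ingredients have already been proven, there is no real obstacle here; any subtlety lies upstream. In particular, the conceptual content, namely that an approximate stabilizer decomposition $\fg = \fh + \fp$ with $\fp$ acting locally finitely suffices to transfer radicals through the contraction $\lfloor - \rfloor_\cC$, was handled in the \prop{Stab}~$\implies$~\prop{Rad} step, while the mechanism for descending \prop{A} from $\eta \circ \omega$ to $\eta$ along a \prop{Gen}-type localization lives entirely in Proposition~\ref{prop:Agen}. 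The corollary itself requires only a one-line citation.
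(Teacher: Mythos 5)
Your proposal is correct and matches the paper's own (one-line) proof exactly: the paper simply cites Proposition~\ref{prop:Agen}, and you have accurately identified that the only work is to check its three hypotheses, namely that \prop{Stab} yields \prop{Rad} via the preceding proposition, that $\cC$ satisfying \prop{A} means $\eta \circ \omega$ satisfies \prop{A}, and that $\omega$ satisfies \prop{Gen} by assumption.
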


\begin{proof}
This follows from Proposition~\ref{prop:Agen}.
\end{proof}

\begin{remark}
Suppose that $\fg$ corresponds to a group $G$ and $\fh$ to a subgroup $H$. Intuitively, \prop{Gen} means that $G$ acts on some variety and there is a point with dense orbit that has stabilizer $H$.
\end{remark}

\subsection{Property (B)}

Let $\fg$ be a Lie algebra in $\cV$. Let $\cU=\cU(\fg)$. For $n \ge 0$, let $\cU_{\le n}=\cU_{\le n}(\fg)$ be the sum of the images of the maps $\fg^{\otimes k} \to \cU$ for $0 \le k \le n$.

\begin{proposition} \label{prop:Lie-zerodiv}
Let $A \in \Comm(\Rep(\fg))$. Let $X,Y \subset A$ be $\cV$-subobjects such that $XY=0$. Then we have $X^{n+1} \cdot \cU_{\le n} Y=0$ for any $n \ge 0$.
\end{proposition}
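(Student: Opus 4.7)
The plan is to induct on $n$, using the fact that the $\fg$-action on $A$ is by derivations in the categorical sense (since the multiplication map $A \otimes A \to A$ is a morphism of $\fg$-modules). The base case $n=0$ is just the hypothesis $XY=0$, since $\cU_{\le 0} = \bone$ and $\bone_A \cdot Y = Y$ by Proposition~\ref{prop:multiplication}(g). For the inductive step, the key identity I want to exploit is the following \emph{sliding lemma}: if $P, Q \subset A$ are $\cV$-subobjects with $PQ=0$, then applying the derivation rule $\xi(pq) = (\xi p)q + p(\xi q)$ to a generic element of $P \otimes Q$ forces $P \cdot (\fg Q) \subset (\fg P) \cdot Q$ as subobjects of $A$. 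In the categorical language this comes directly from chasing $\fg \otimes P \otimes Q$ through the two equal composites involving $A \otimes A \to A$ and the $\fg$-action, together with $PQ = 0$.

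With this in hand, assume inductively that $X^{n+1} \cdot \cU_{\le n} Y = 0$, and set $Z = \cU_{\le n} Y$, so $X^{n+1} Z = 0$. Since $\cU_{\le n+1} Y \subset \cU_{\le n} Y + \fg \cdot \cU_{\le n} Y = Z + \fg Z$, I need to show that $X^{n+2}$ annihilates both $Z$ and $\fg Z$. For $Z$ this is immediate from associativity and the inductive hypothesis: $X^{n+2} Z = X \cdot (X^{n+1} Z) = 0$ using Proposition~\ref{prop:multiplication}(b). For $\fg Z$, I apply the sliding lemma with $P = X^{n+1}$ and $Q = Z$ (which is legal since $X^{n+1}Z = 0$) to get $X^{n+1} \cdot (\fg Z) \subset (\fg X^{n+1}) \cdot Z$. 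Then, using the Leibniz rule, $\fg X^{n+1} \subset (\fg X) \cdot X^n$ as $\cV$-subobjects of $A$. Putting this together and using commutativity and associativity of multiplication (Proposition~\ref{prop:multiplication}(b)),
\begin{displaymath}
X^{n+2} \cdot (\fg Z) = X \cdot \bigl( X^{n+1} \cdot (\fg Z) \bigr) \subset X \cdot (\fg X) \cdot X^n \cdot Z = (\fg X) \cdot X^{n+1} \cdot Z = 0,
\end{displaymath}
which closes the induction.

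The only genuinely non-trivial point is the sliding lemma, which must be verified at the level of subobjects in the tensor category rather than with elements. I would spell it out by noting that the $\fg$-equivariance of the multiplication map $m \colon A \otimes A \to A$ gives a commutative square expressing $\xi \cdot m(a \otimes b) = m(\xi \cdot (a \otimes b))$, where $\fg$ acts on $A \otimes A$ via $\xi \cdot (a \otimes b) = (\xi a) \otimes b + a \otimes (\xi b)$; restricting along the inclusion $\fg \otimes X^{n+1} \otimes Z \hookrightarrow \fg \otimes A \otimes A$ and using that the lower route vanishes because $X^{n+1} Z = 0$ gives the desired containment. Everything else is routine book-keeping with the manipulations of Proposition~\ref{prop:multiplication}.
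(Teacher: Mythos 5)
Your proof is correct, and its inductive core is the same as the paper's: you observe that once $X^{n+1}\cdot\cU_{\le n}Y=0$, applying a single element of $\fg$ (either to $\cU_{\le n}Y$ or to $X^{n+1}$) and multiplying by one more copy of $X$ pushes the vanishing up one step. The difference is purely in how the two of you justify the element-style Leibniz manipulations in an arbitrary tensor category. The paper first proves the statement for $\cV=\mathrm{Ab}$, where $X$, $Y$ are honest subsets of a ring and one can literally compute with elements $x$, $y$, $a$, $E$, and then reduces the general case to this one by a functor-of-points device ($A(T)=\Hom_{\cV}(T,A)$, $T=X\oplus Y\oplus\fg$). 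You instead stay inside $\cV$ and encode each elementary step as a containment of subobjects: the $\fg$-equivariance of $m\colon A\otimes A\to A$, restricted to $\fg\otimes P\otimes Q$ with $m|_{P\otimes Q}=0$, gives your ``sliding'' identity $P\cdot(\fg Q)=(\fg P)\cdot Q$ (note it is even an equality of images, since the two restricted composites are negatives of each other), and the same equivariance applied to the iterated multiplication $A^{\otimes(n+1)}\to A$ gives $\fg X^{n+1}\subset(\fg X)\cdot X^n$.

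The one place to be careful is the passage from an equality of maps to an equality of images via right exactness of $\otimes$: e.g.\ you need the image of $\fg\otimes P\otimes Q\to A$ (acting on $Q$ then multiplying) to coincide with the subobject $P\cdot(\fg Q)$ as you defined it, which holds because $P\otimes\fg\otimes Q\twoheadrightarrow P\otimes(\fg Q)$. You gesture at this, and it is routine, but it is exactly the point where the bookkeeping lives, so it is worth writing out explicitly if you expand this into a full proof. What your route buys is that the whole argument is internal to $\cV$ and never needs the auxiliary reduction to $\mathrm{Ab}$; what the paper's route buys is that the inductive step can be carried out with literal elements and then transported, which some readers will find easier to parse.
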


\begin{proof}
We first prove the proposition when $\cV$ is the category of abelian groups; thus $A$ is an ordinary commutative ring, $\fg$ is an ordinary Lie ring acting on $A$ by derivations, and $X$ and $Y$ are $\bZ$-submodules of $A$. We proceed by induction on $n$. The $n=0$ case is given. Suppose now that we have shown $X^{n+1} \cdot \cU_{\le n} Y=0$. Thus for $x \in X^{n+1}$, $y \in Y$, and $a \in \cU_{\le n}$, we have $x \cdot (ay)=0$. Let $E \in \fg$. Applying $E$ to this equation, we find $(Ex)(ay)+x \cdot(Eay)=0$. Since $x$ is a sum of $(n+1)$-fold products of elements of $X$ and $E$ acts by derivations, we see that $Ex \in X^n$. Thus if $w \in X$ is any element then $w \cdot Ex \in X^{n+1}$, and thus annihilates $ay$. Multiplying the previous equation by $w$, we therefore find $wx \cdot (Eay)=0$. Since this holds for all choices of $w$, $x$, $y$, $a$, and $E$, we find $X^{n+2} \cdot \cU_{\le n+1}Y=0$, as required.

We now treat the general case, using a functor of points approach. For an object $T$ of $\cV$, let $A(T)=\Hom_{\cV}(T,A)$ and $\fg(T)=\Hom_{\cV}(T,\fg)$. Then $A(T)$ is a commutative ring and $\fg(T)$ is a Lie ring acting on $A(T)$ by derivations. Let $T=X \oplus Y \oplus \fg$. Let $x \in A(T)$ be the map $T \to X \to A$, where the first map is the projection and the second is the inclusion, and define $y \in A(T)$ similarly. Then $xy=0$. By the previous paragraph, we have $x^{n+1} \cdot \cU_{\le n}(\fg(T)) y=0$. In particular, letting $a \in \fg(T)$ be the projection map $T \to \fg$, we find $x^{n+1} \cdot a^ky=0$ for $0 \le k \le n$. Regarding $x^{k+1} \cdot a^ky$ as a morphism $T \to A$, its image is $X^{n+1} \cdot \fg^k Y$, where $\fg^k$ is the image of $\fg^{\otimes k}$ in $\cU$. We thus see that $X^{n+1} \cdot \fg^k Y=0$ for $0 \le k \le n$, which completes the proof.
\end{proof}

Fix an admissible subcategory $\cC$ of $\Rep(\fg)$. We consider the following condition:
\begin{itemize}
\item[\prop{UF}\label{UF}] Let $M \in \cC$ be finitely generated. Then $M=\cU_{\le n} X$ for some $n$ and some $X \in [M]^{\rf}$.
\end{itemize}

\begin{proposition} \label{prop:UF-gen}
Suppose $\cC$ satisfies \prop{UF}. Let $M \in \cC$ be finitely generated, and let $X$ be a subobject generating $M$. Then $M=\cU_{\le n} X$ for some $n$.
\end{proposition}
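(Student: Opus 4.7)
The plan is to combine \prop{UF} with the hypothesis that $X$ generates $M$ as a $\fg$-module, exploiting the directedness of the filtration $\cU_{\le 0} \subset \cU_{\le 1} \subset \cdots$ on $\cU(\fg)$. First I apply \prop{UF} to $M$ to produce some $Y \in [M]^{\rf}$ and an integer $m \ge 0$ with $\cU_{\le m} Y = M$.

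Since $X$ generates $M$ as a $\fg$-module, we have $\cU(\fg) X = M$, where $\cU_{\le n} X$ is interpreted as the image of the action map $\cU_{\le n} \otimes X \to M$ in $\cV$. Writing $\cU(\fg) = \sum_{k \ge 0} \cU_{\le k}$ as a directed union in $\cV$ and invoking cocontinuity of $\otimes$ (the $\cV$-analogue of Proposition~\ref{prop:multiplication}(a)), we obtain $M = \cU(\fg) X = \sum_{k \ge 0} \cU_{\le k} X$, an ascending chain of $\cV$-subobjects of $M$. Since $Y$ is finitely generated and contained in this directed union, the definition of finite generation together with directedness forces $Y \subset \cU_{\le k_0} X$ for a single $k_0$.

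Putting the two steps together, $M = \cU_{\le m} Y \subset \cU_{\le m} \cU_{\le k_0} X \subset \cU_{\le m+k_0} X \subset M$, where the middle inclusion uses the standard filtration-compatibility $\cU_{\le m} \cdot \cU_{\le k_0} \subset \cU_{\le m+k_0}$ in $\cU(\fg)$. Setting $n = m + k_0$ gives the conclusion. There is no real obstacle here: the argument is a routine extraction of a single filtration level from a directed union via finite generation. The only points needing care are the consistent interpretation of $\cU_{\le n} X$ as a $\cV$-subobject of $M$ and the sub-multiplicativity of the filtration on the enveloping algebra.
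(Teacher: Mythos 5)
Your proof is correct and follows essentially the same route as the paper: apply \prop{UF} to get $M=\cU_{\le m}Y$ for some finitely generated $Y$, then use finite generation of $Y$ to pin it inside $\cU_{\le k}X$ for a single $k$, and conclude via the filtration inequality $\cU_{\le m}\cU_{\le k}\subset\cU_{\le m+k}$. No gaps.
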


\begin{proof}
By definition, we have $M=\cU_{\le m} Y$ for some $m$ and some finitely generated subobject $Y$ of $M$. Since $X$ generates $M$, we have $M=\cU X=\sum_{k \ge 0} \cU_{\le k} X$. Since $Y$ is contained in $\sum_{k \ge 0} \cU_{\le k} X$ and finitely generated, we have $Y \subset \cU_{\le k} X$ for some $k$. Thus $M=\cU_{\le m} \cU_{\le k} X \subset \cU_{\le m+k} X$. We can therefore take $n=m+k$.
\end{proof}

\begin{proposition}
If $\cC$ satisfies \prop{UF} then it satisfies \propn{B}{1}.
\end{proposition}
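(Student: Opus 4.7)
The goal is to show that for $A \in \Comm(\cC)$ and $X, Y \in [A^{\omega}]^{\rf}$ with $XY = 0$, there exists $n \ge 1$ with $\lceil X^n \rceil \cdot \lceil Y \rceil = 0$. The plan combines the nilpotency-propagation bound of Proposition~\ref{prop:Lie-zerodiv} with the filtration boundedness supplied by \prop{UF}, and then bootstraps using the $\fg$-stability of annihilators.

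The first task is to bound the filtration depth needed to recover $\lceil Y \rceil$ from $Y$. Since $Y$ is a finitely generated $\cV$-subobject, Proposition~\ref{prop:ceil-fg} shows that $\lceil Y \rceil$ is a finitely generated object of $\cC$. Because $Y$ generates $\lceil Y \rceil$ as a $\fg$-module (i.e.\ $\cU Y = \lceil Y \rceil$), Proposition~\ref{prop:UF-gen} applied to $\lceil Y \rceil$ with generating subobject $Y$ furnishes an integer $m$ with $\lceil Y \rceil = \cU_{\le m} Y$. Feeding this into Proposition~\ref{prop:Lie-zerodiv} (with $n = m$ there) gives $X^{m+1} \cdot \cU_{\le m} Y = 0$, which is exactly $X^{m+1} \cdot \lceil Y \rceil = 0$.

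The remaining step is to upgrade $X^{m+1}$ to $\lceil X^{m+1} \rceil$ via an annihilator argument. Let $J$ be the sum of all $Z \in [A]$ with $Z \cdot \lceil Y \rceil = 0$; by Proposition~\ref{prop:multiplication}(a), $J \cdot \lceil Y \rceil = 0$, so $J$ is the maximal such subobject. The key point is that $J$ is $\fg$-stable: applying $\xi \in \fg$ to the relation $Z \cdot \lceil Y \rceil = 0$ and using that $\fg$ acts on $A$ by derivations yields $0 = (\xi Z) \cdot \lceil Y \rceil + Z \cdot (\xi \lceil Y \rceil)$, and the second term is contained in $Z \cdot \lceil Y \rceil = 0$ because $\lceil Y \rceil$ is $\fg$-stable. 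Hence $\xi Z \subset J$, so $J$ is a $\cC$-subobject of $A$ containing $X^{m+1}$, and therefore contains the $\fg$-submodule it generates, namely $\lceil X^{m+1} \rceil$. Taking $n = m + 1$ yields $\lceil X^n \rceil \cdot \lceil Y \rceil = 0$, as required.

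The main obstacle is phrasing the last $\fg$-stability identity rigorously in the categorical setting, where honest elements are unavailable. The cleanest route is the functor-of-points trick already used in the proof of Proposition~\ref{prop:Lie-zerodiv}: carry out the derivation computation inside the ordinary commutative ring $A(T) = \Hom_{\cV}(T, A)$ for a suitable test object $T$ (e.g.\ $T = Z \oplus \lceil Y \rceil \oplus \fg$), then translate the resulting identity of morphisms back into the desired containment of subobjects.
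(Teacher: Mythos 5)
Your proof is correct and follows the same route as the paper's: obtain $\lceil Y\rceil=\cU_{\le m}Y$ from \prop{UF} via Proposition~\ref{prop:UF-gen}, feed this into Proposition~\ref{prop:Lie-zerodiv} to get $X^{m+1}\cdot\lceil Y\rceil=0$, and then promote $X^{m+1}$ to $\lceil X^{m+1}\rceil=\cU X^{m+1}$. The only difference is that the paper compresses the last step into ``It follows that $(\cU X^{n+1})\cdot(\cU Y)=0$,'' whereas you make the reasoning explicit: the annihilator of the $\fg$-stable subobject $\lceil Y\rceil$ is itself $\fg$-stable (by the derivation identity, or an induction on filtration degree), hence swallows $\lceil X^{m+1}\rceil$ once it contains $X^{m+1}$. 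You are also right that this derivation computation should strictly be carried out via the functor-of-points device used in Proposition~\ref{prop:Lie-zerodiv}; that is a fair observation about a detail the paper leaves implicit, and your filling it in strengthens rather than alters the argument.
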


\begin{proof}
Let $A \in \Comm(\cC)$, and suppose $XY=0$ for $X,Y \in [A]^{\rf}$. By Proposition~\ref{prop:UF-gen}, we have $\cU Y =\cU_{\le n} Y$ for some $n$. Thus by Proposition~\ref{prop:Lie-zerodiv}, we have $X^{n+1} \cdot \cU Y=0$. It follows that $(\cU X^{n+1}) \cdot (\cU Y)=0$, which completes the proof.
\end{proof}

\begin{corollary} \label{cor:UFB}
If $\cC$ satisfies \prop{UF} and \prop{A} then it satisfies \prop{B}.
\end{corollary}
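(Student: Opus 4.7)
The plan is to observe that this corollary follows essentially immediately from two results stated just above it. The immediately preceding proposition establishes that \prop{UF} implies \propn{B}{1}, and the corollary preceding that proposition states that \prop{A} together with \propn{B}{1} implies \prop{B}. So the entire task is simply to chain these two implications.

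Concretely, I would write: assume $\cC$ satisfies \prop{UF} and \prop{A}. By the preceding proposition, \prop{UF} yields \propn{B}{1}. Hence $\cC$ satisfies both \prop{A} and \propn{B}{1}. Applying the preceding corollary (which gives $\prop{A} \land \propn{B}{1} \implies \prop{B}$) then produces \prop{B}, as required.

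There is no real obstacle here since all the work has already been done: the nontrivial input is Proposition~\ref{prop:Lie-zerodiv} (which supplies \propn{B}{1} out of \prop{UF} via the commutator estimate $X^{n+1} \cdot \cU_{\le n} Y = 0$), and the implication $\prop{A} \land \propn{B}{1} \implies \prop{B}$ was handled by the preceding dichotomy argument on $\rad(\fa)$ versus $\rad(\fa^\omega)$. So the corollary itself is just a one-line consequence and the proof should be presented as such.
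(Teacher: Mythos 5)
Your proposal is correct and is exactly the intended (implicit) argument: the paper states this as an immediate corollary of the proposition that \prop{UF} implies \propn{B}{1} together with the earlier corollary that $\prop{A} \land \propn{B}{1} \implies \prop{B}$. Nothing more is needed.
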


For the next two results, we fix a Lie subalgebra $\fh \subset \fg$ and an admissible subcategory $\cD \subset \Rep(\fh)$ that is compatible with $\cC$.

\begin{proposition} \label{prop:FinUF}
Suppose that $\cD$ satisfies \prop{UF} and the restriction functor $\cC \to \cD$ satisfies \prop{Fin}. Then $\cC$ also satisfies \prop{UF}.
\end{proposition}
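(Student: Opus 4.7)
The plan is to transfer a \prop{UF}-presentation from $\cD$ up to $\cC$ along the restriction functor, using \prop{Fin} to carry the finite-generation hypothesis across.

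Concretely, let $M \in \cC$ be finitely generated. The first step is to apply \prop{Fin} for the restriction functor $\cC \to \cD$: this gives that $M$, viewed in $\cD$, is finitely generated as well. The second step is to invoke \prop{UF} for $\cD$, which produces an integer $n$ and a finitely generated $\cV$-subobject $X \subset M$ with $M = \cU_{\le n}(\fh) X$ inside $\cV$.

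The third step is to upgrade this $\fh$-decomposition to a $\fg$-decomposition. The inclusion $\fh \hookrightarrow \fg$ induces an algebra map $\cU(\fh) \to \cU(\fg)$, and by the definition of the filtrations this sends $\cU_{\le n}(\fh)$ into $\cU_{\le n}(\fg)$ (both are built as sums of images of tensor powers of the respective Lie algebras, and these are natural in the Lie algebra). Since $M$ acquires its $\fh$-module structure by restriction, the $\fh$-action map $\cU_{\le n}(\fh) \otimes X \to M$ coincides with the composite $\cU_{\le n}(\fh) \otimes X \to \cU_{\le n}(\fg) \otimes X \to M$, where the second arrow is the $\fg$-action. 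Passing to images in $M$ yields $\cU_{\le n}(\fh) X \subset \cU_{\le n}(\fg) X$, and combining with $M = \cU_{\le n}(\fh) X$ gives $M = \cU_{\le n}(\fg) X$. This is exactly \prop{UF} for $\cC$, with the same witnesses $n$ and $X$.

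There is no real obstacle: the argument is essentially bookkeeping about the functoriality of the PBW-style filtration $\cU_{\le n}(-)$ in the Lie algebra. The one point worth flagging, to keep the notation consistent with the paper's use of \prop{UF} earlier (e.g.\ in the proof of Proposition~\ref{prop:UF-gen} and Proposition~5.4), is that the ``generating object'' $X$ in \prop{UF} is a $\cV$-subobject finitely generated in $\cV$, not a sub-$\fh$- or sub-$\fg$-module; this is exactly what allows the passage $\cU_{\le n}(\fh) X \subset \cU_{\le n}(\fg) X$ to be immediate with no change of $X$.
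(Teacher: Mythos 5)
Your proof is correct and follows essentially the same route as the paper's: use \prop{Fin} to get finite generation of $M$ in $\cD$, apply \prop{UF} for $\cD$ to write $M=\cU_{\le n}(\fh)X$, then use the inclusion $\cU_{\le n}(\fh)\subset\cU_{\le n}(\fg)$ to conclude $M=\cU_{\le n}(\fg)X$. You spell out the functoriality of the filtration $\cU_{\le n}(-)$ and the compatibility of the $\fh$- and $\fg$-actions in more detail than the paper does, but the argument is the same.
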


\begin{proof}
Let $M \in \cC$ be finitely generated. Then $M$ is finitely generated as an $\fh$-representation, and so by \prop{UF} we have $M=\cU_{\le n}(\fh) X$ for some finitely generated subobject $X$ of $M$. Since $\cU_{\le n}(\fh) \subset \cU_{\le n}(\fg)$, we have $M=\cU_{\le n}(\fg) X$. Thus $\cC$ satisfies \prop{UF}.
\end{proof}

\begin{proposition} \label{prop:good-sub}
Suppose that $\cD$ satisfies \prop{UF} and \prop{A}, and the functor $\cC \to \cD$ satisfies \prop{Fin}. Then $\cC$ satisfies \prop{A}, \prop{B} and \prop{UF}.
\end{proposition}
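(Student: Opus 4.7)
The plan is to assemble the three conclusions out of results already proved in the section, chaining them through the factorization of the forgetful functor $\cC \to \cV$ as $\cC \xrightarrow{\omega} \cD \xrightarrow{\eta} \cV$, where $\omega$ is the restriction functor and $\eta$ is the forgetful functor on $\cD$.

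First I would dispatch \prop{UF} for $\cC$ as an immediate citation of Proposition~\ref{prop:FinUF}: that proposition says precisely that if $\cD$ satisfies \prop{UF} and $\omega$ satisfies \prop{Fin}, then $\cC$ satisfies \prop{UF}. Both hypotheses are in force, so there is nothing further to do here.

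Next I would prove \prop{A} for $\cC$ by showing that the composite forgetful functor $\eta \circ \omega \colon \cC \to \cV$ satisfies \prop{A}. By hypothesis, $\cD$ satisfies \prop{A}, which by definition means $\eta$ satisfies \prop{A}. For $\omega$, the hypothesis \prop{Fin} combined with Proposition~\ref{prop:FinA} yields \prop{A} (note that $\omega$ is a fiber functor of admissible tensor categories by the earlier results of this section, so Proposition~\ref{prop:FinA} applies). Proposition~\ref{prop:compA} then tells us that the composite $\eta \circ \omega$ satisfies \prop{A}, which is exactly the assertion that $\cC$ satisfies \prop{A}.

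Finally, having established both \prop{UF} and \prop{A} for $\cC$, the last conclusion \prop{B} follows directly from Corollary~\ref{cor:UFB}, which states that \prop{UF} and \prop{A} together imply \prop{B}. There is no real obstacle in this proof; each conclusion is a one-line appeal to a result just proved, and the only conceptual point is the choice to route \prop{A} through the intermediate category $\cD$ via composition, rather than attempting to verify \prop{A} for $\cC$ directly.
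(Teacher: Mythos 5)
Your proof is correct and follows exactly the same route as the paper's: \prop{UF} via Proposition~\ref{prop:FinUF}, \prop{A} for $\cC$ by combining Proposition~\ref{prop:FinA} (to get \prop{A} for $\omega$) with Proposition~\ref{prop:compA} (to compose with $\eta$), and \prop{B} from Corollary~\ref{cor:UFB}.
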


\begin{proof}
Since $\cC \to \cD$ satisfies \prop{Fin} it also satisfies \prop{A} by Proposition~\ref{prop:FinA}; thus $\cC \to \cV$ satisfies \prop{A} by Proposition~\ref{prop:compA}. Proposition~\ref{prop:FinUF} shows that $\cC$ satisfies \prop{UF}, and so $\cC \to \cV$ satisfies \prop{B} by Corollary~\ref{cor:UFB}.
\end{proof}

\section{Applications to Lie superalgebras} \label{s:super}

\subsection{General remarks}

Fix a field $\bk$ of characteristic~0 throughout this section. Recall that a \defn{super vector space} over $\bk$ is a $\bZ/2$-graded vector space $V=V_0 \oplus V_1$. Given two super vector spaces $V$ and $W$, a linear map $f \colon V \to W$ is \defn{homogeneous} of degree $d \in \bZ/2$ if $f(V_i) \subset W_{i+d}$ for all $i \in \bZ/2$. We let $\SVec$ be the category whose objects are super vector spaces and whose morphisms are homogeneous morphisms of degree~0. Given a super vector space $V=V_0 \oplus V_1$, we let $V[1]$ be the super vector space given by $V[1]_i=V_{i+1}$; thus $(-)[1]$ switches the even and odd pieces. There is a natural isomorphism $V \to V[1]$ that is homogeneous of degree~1; note, however, that this does not count as an isomorphism in our category $\SVec$. Given two super vector spaces $V$ and $W$, their tensor product $V \otimes W$ is just their usual tensor product as graded vector spaces. We let $V \otimes W \to W \otimes V$ be the isomorphism defined by $x \otimes y \mapsto (-1)^{\vert x \vert \vert y \vert} y \otimes x$, where $\vert x \vert \in \bZ/2$ denotes the degree of the homogeneous element $x \in V$. This defines a symmetry on the tensor product. In this way, $\SVec$ is an admissible tensor category.

Let $\fg$ be a Lie superalgebra, that is, a Lie algebra object of $\SVec$. We let $\Rep(\fg)$ be the category of representations of $\fg$ on super vector spaces over $\bk$. As in the previous paragraph, the morphisms in this category are required to be homogeneous of degree~0. There is a natural forgetful functor $\Rep(\fg) \to \SVec$.

Let $\cC$ be an admissible subcategory of $\Rep(\fg)$. Suppose $A$ is a commutative algebra in $\cC$ and let $\fa$ be an ideal of $A$. We say that $\fa$ is \defn{$\fg$-prime} if it is a prime ideal in $\cC$. We write $\rad_{\fg}(\fa)$ for the radical of $\fa$ in $\cC$, and refer to this as the \defn{$\fg$-radical}; we let $\rad(\fa)$ be the usual radical of $\fa$ in the ring $A$. We let $\Spec_{\fg}(A)$ be the spectrum of $A$ as an algebra in $\cC$ (i.e., the set of $\fg$-primes), and we let $\Spec(A)$ denote the spectrum of the ordinary ring $A$.

\begin{proposition} \label{prop:simpleUF}
Suppose the following two conditions hold:
\begin{itemize}
\item Every finitely generated object of $\cC$ has finite length.
\item If $M \in \cC$ is simple then $M=\cU_{\le n} \cdot V$ for some $n$ and some finite dimensional subspace $V$ of $M$.
\end{itemize}
Then $\cC$ satisfies \prop{UF}.
\end{proposition}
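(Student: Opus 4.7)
The plan is to induct on the length of $M$ as an object of $\cC$, which is finite by the first hypothesis. For $M = 0$ take $X = 0$ and $n = 0$, and for $M$ simple the conclusion is precisely the second hypothesis.

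For the inductive step, I would choose a short exact sequence $0 \to M' \to M \xrightarrow{\pi} M'' \to 0$ in $\cC$ with $M''$ simple (this exists because $M$ has finite length); the submodule $M'$ then has strictly smaller length. By the inductive hypothesis, $M' = \cU_{\le m}\, Y$ for some $m$ and some finite dimensional subspace $Y \subset M'$, and by the second hypothesis $M'' = \cU_{\le k}\, \bar V$ for some $k$ and some finite dimensional $\bar V \subset M''$. Since $\SVec$ is semisimple, the underlying short exact sequence of super vector spaces splits, so I can lift $\bar V$ to a finite dimensional subspace $V \subset M$ with $\pi(V) = \bar V$.

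Setting $X = Y + V$ (a finite dimensional subspace of $M$) and $n = \max(m,k)$, Proposition~\ref{prop:multiplication}(a,d) yields
\begin{displaymath}
\cU_{\le n}\, X \;\supset\; \cU_{\le m}\, Y + \cU_{\le k}\, V \;=\; M' + \cU_{\le k}\, V.
\end{displaymath}
The right hand side contains $M'$ and, under $\pi$, surjects onto $\cU_{\le k}\, \bar V = M''$, so it equals all of $M$. Combined with $\cU_{\le n}\, X \subset M$, this gives $M = \cU_{\le n}\, X$, verifying \prop{UF}. The whole argument is routine bookkeeping with the filtration $\cU_{\le \bullet}$ once the length induction is set up; the only point of any delicacy is the lifting of $\bar V$ across $\pi$, which is painless because every short exact sequence in $\SVec$ splits.
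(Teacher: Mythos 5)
Your proof is correct and follows essentially the same route as the paper: induct on length (which is finite by the first hypothesis), lift a finite-dimensional generating subspace of the simple top quotient across $\pi$, and combine it with a generating subspace of the kernel, taking $n$ to be the max of the two bounds. The paper phrases this as ``good objects are closed under extensions'' and allows both ends of the extension to be arbitrary good objects rather than requiring the quotient to be simple, but the underlying argument is identical.
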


\begin{proof}
Say that $M \in \cC$ is \defn{good} if $M=\cU_{\le n} \cdot V$ for some $n$ and some finite dimensional subspace $V$ of $M$. Thus all simple objects are good, and to show that $\cC$ satisfies \prop{UF} we must show that all finite length objects are good. It thus suffices to show that if
\begin{displaymath}
0 \to M_1 \to M_2 \to M_3 \to 0
\end{displaymath}
is a short exact sequence in $\cC$ with $M_1$ and $M_3$ good then $M_2$ is good. Thus suppose such a sequence is given. Write $M_1=\cU_{\le n} V$ and $M_3=\cU_{\le m} W$. Let $\wt{W}$ be a finite dimensional subspace of $M_2$ surjecting onto $W$. Given $x \in M_2$, we can find $y \in \cU_{\le m} W$ such that $x$ and $y$ have the same image in $M_3$. Thus $x-y \in M_1$, and so $x-y=z$ for some $z \in \cU_{\le n} V$. We thus find $M_2=\cU_{\le \max(n,m)} (V+\wt{W})$, which completes the proof.
\end{proof}

We now describe a method of constructing admissible subcategories. Let $S$ be a set of $\fg$-modules. Let $S_1$ be the set of all $\fg$-modules of the form $X_1 \otimes \cdots \otimes X_r$ with $X_1, \ldots, X_r \in S$. Let $S_2$ be the class of all $\fg$-modules of the form $\bigoplus_{i \in I} M_i$ with $M_i \in S_1$. Finally, let $S_3$ be the class of all $\fg$-modules that occur as a subquotient of a $\fg$-module in $S_2$. We define the tensor subcategory of $\Rep(\fg)$ \defn{generated} by $S$ to be the full subcategory of $\Rep(\fg)$ spanned by $S_3$. It is easily seen to be a Grothendieck abelian category and closed under tensor product. Furthermore, if every object in $S_1$ has finite length, then it is an admissible subcategory of $\Rep(\fg)$, as defined in Definition~\ref{def:admissible}.

\subsection{Polynomial representations of $\fgl$}

Let
\begin{displaymath}
\fgl=\fgl_{\infty|\infty}=\bigcup_{n \ge 1} \fgl_{n|n}, \qquad
\bV=\bC^{\infty|\infty}=\bigcup_{n \ge 1} \bC^{n|n}.
\end{displaymath}
Then $\bV$ is naturally a representation of $\fgl$, and we call it the \defn{standard representation}. In this section, we review the polynomial representation theory of $\fg$; we refer to \cite[\S 3.2]{chengwang} for more detailed information.

Let $\Rep^{\npol}(\fgl)$ be the tensor subcategory of $\Rep(\fg)$ generated by $\bV$. We refer to representations in this category as \defn{narrow polynomial representations}. (Note: ``narrow'' is not standard terminology.) Since the tensor powers of $\bV$ are finite length, this is an admissible subcategory. In fact, $\Rep^{\npol}(\fgl)$ is a semisimple abelian category, and every simple object has the form $\bS_{\lambda}(\bV)$ for a partition $\lambda$; here $\bS_{\lambda}$ denotes the Schur functor. It follows that $\Rep^{\npol}(\fgl)$ is equivalent (as a tensor category) to the classical category of polynomial representations of $\fgl_{\infty}$ on (non-super) vector spaces.

Let $\Rep^{\pol}(\fgl)$ be the tensor subcategory of $\Rep(\fgl)$ generated by $\bV$ and $\bV[1]$. We refer to representations in this category as \defn{wide polynomial representations}, but we often omit the word ``wide'' (which, again, is non-standard terminology.) This is an admissible subcategory, and semi-simple. The simple objects are now of the form $\bS_{\lambda}(\bV)$ or $\bS_{\lambda}(\bV)[1]$. Thus, as an abelian category, $\Rep^{\pol}(\fgl)$ is equivalent to a direct sum of two copies of $\Rep^{\npol}(\fgl)$.

\begin{proposition}
The category $\Rep^{\pol}(\fgl)$ satisfies \prop{UF}.
\end{proposition}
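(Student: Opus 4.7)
The plan is to apply Proposition~\ref{prop:simpleUF} to $\cC=\Rep^{\pol}(\fgl)$, which requires verifying: (i) every finitely generated object has finite length; and (ii) every simple object is of the form $\cU_{\le n}(\fgl)\cdot W$ for some integer $n$ and some finite-dimensional subspace $W$.

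For (i), I invoke the semisimplicity of $\Rep^{\pol}(\fgl)$ recorded just above. Any object $M\in\cC$ splits as $\bigoplus_{i\in I}L_i$ into simple summands. If $I$ is infinite, then $\{L_i\}_{i\in I}$ is a family of subobjects whose sum is $M$, yet for any finite $J\subseteq I$ the partial sum $\bigoplus_{i\in J}L_i$ is a proper subobject of $M$; hence $M$ is not finitely generated. So a finitely generated $M$ has $I$ finite, and thus finite length.

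For (ii), the simple objects are $\bS_\lambda(\bV)$ and $\bS_\lambda(\bV)[1]$, and the parity shift affects neither the underlying space nor the $\fgl$-action, so it suffices to handle $M=\bS_\lambda(\bV)$ for a partition $\lambda$ of $d$. Since $\Rep^{\pol}(\fgl)$ is generated as a tensor subcategory by $\bV$ and $\bV[1]$, the simple $M$ occurs as a subquotient of some $\bV^{\otimes a}\otimes\bV[1]^{\otimes b}$, which equals $\bV^{\otimes N}$ (with $N=a+b$) up to a parity shift; by semisimplicity, every subquotient is a direct summand, so there is an equivariant surjection $\pi\colon\bV^{\otimes N}\twoheadrightarrow M$. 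If I can exhibit a finite-dimensional $W'\subset\bV^{\otimes N}$ with $\bV^{\otimes N}=\cU_{\le n}(\fgl)\cdot W'$ for some $n$, then applying $\pi$ yields $M=\cU_{\le n}(\fgl)\cdot\pi(W')$, as required.

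The remaining task is therefore to show $\bV^{\otimes N}=\cU_{\le n}(\fgl)\cdot W'$ for some finite-dimensional $W'$. I propose $W'=\bk\cdot v_0$ with $v_0=e_1\otimes\cdots\otimes e_N$ in the standard even basis of $\bV$, and $n=N$. The plan is a direct computation showing every basis tensor $w=e_{i_1}\otimes\cdots\otimes e_{i_N}$ (each $e_{i_k}$ an even or odd basis vector) lies in $\cU_{\le N}(\fgl)\cdot v_0$. In the easy case where the $i_k$ are pairwise distinct and disjoint from $\{1,\ldots,N\}$, the composition $E_{i_N,N}\cdots E_{i_1,1}$ acts on only one tensor position at each step and produces $w$ from $v_0$ in exactly $N$ steps. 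The main technical obstacle I anticipate is the bookkeeping when the $i_k$ collide with one another or meet $\{1,\ldots,N\}$: the derivation action on $\bV^{\otimes N}$ introduces extra terms at each step. These can be cancelled by taking suitable $\bk$-linear combinations of length-$\le N$ words in matrix units and subtracting lower-degree contributions from $\cU_{\le k}(\fgl)\cdot v_0$ for $k<N$; since Proposition~\ref{prop:simpleUF} only requires the existence of some finite $n$, even a crude polynomial bound in $N$ would suffice.
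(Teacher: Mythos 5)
Your proof follows essentially the same route as the paper: reduce via Proposition~\ref{prop:simpleUF} to showing $\bV^{\otimes N}$ is generated over $\cU_{\le n}$ by the vector $v_0 = e_1 \otimes \cdots \otimes e_N$, then descend to $\bS_\lambda(\bV)$ and its parity shift. Where the paper simply asserts ``one easily sees that $\bV^{\otimes n} = \cU_{\le n}\, v$,'' you rightly flag the collision bookkeeping and (correctly) observe that any finite bound on $n$ in terms of $N$ is enough, so the hedge is harmless.
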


\begin{proof}
Let $\{e_i\}_{i \ge 1}$ be a basis for the even part of $\bV$, and let $v=e_1 \otimes \cdots \otimes e_n \in \bV^{\otimes n}$. One easily sees that $\bV^{\otimes n}= \cU_{\le n} v$. Thus, in the terminology of the proof of Proposition~\ref{prop:simpleUF}, the representation $\bV^{\otimes n}$ is good. Since $\bS_{\lambda}(\bV)$ is a quotient of $\bV^{\otimes n}$, with $n=\vert \lambda \vert$, we see that $\bS_{\lambda}(\bV)$ is good. It follows that $\bS_{\lambda}(\bV)[1]$ is also good. Thus \prop{UF} holds by Proposition~\ref{prop:simpleUF}.
\end{proof}

\begin{proposition}
The category $\Rep^{\pol}(\fgl)$ satisfies \prop{A}.
\end{proposition}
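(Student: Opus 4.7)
The plan is to verify the equivalent property \propn{A}{1}: for every $A \in \Comm(\Rep^{\pol}(\fgl))$ and every subobject $X \in [A]$, if $X^{\omega}$ is locally nilpotent then so is $X$ (the converse is immediate from Proposition~\ref{prop:fiber-rad}). Since any $X$ is a sum of its finitely generated subobjects, each of which has locally nilpotent image under $\omega$, we may assume $X$ is finitely generated in $\cC := \Rep^{\pol}(\fgl)$.

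The key structural observation is that $\cC$ carries a canonical $\bZ/2$-grading refining its semisimple decomposition: every $M \in \cC$ splits uniquely as $M = M^0 \oplus M^1$, where $M^0$ is the sum of the $\bS_{\lambda}(\bV)$-isotypic components and $M^1$ is the sum of the $\bS_{\lambda}(\bV)[1]$-isotypic components. Since two parity shifts cancel in $\SVec$, this grading is compatible with the tensor structure: $M^i \otimes N^j \subset (M \otimes N)^{i+j \bmod 2}$. Consequently, a commutative algebra $A$ in $\cC$ decomposes as a $\bZ/2$-graded algebra $A = A^0 \oplus A^1$ with $A^0 \in \Comm(\Rep^{\npol}(\fgl))$ and $A^1$ an $A^0$-module. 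Write $X = X^0 \oplus X^1$ accordingly; both summands remain finitely generated.

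For the narrow part $X^0 \subset A^0$, note that $(A^0)^{\omega}$ is a subring of $A^{\omega}$ on which nilpotence agrees with that in $A^{\omega}$, so $\rad((A^0)^{\omega}) = \rad(A^{\omega}) \cap (A^0)^{\omega}$ and hence $(X^0)^{\omega} \subset \rad((A^0)^{\omega})$. Applying property \prop{A} for the forgetful functor $\Rep^{\npol}(\fgl) \to \SVec$ (which follows from \cite{tcaprimes} via the tensor equivalence between $\Rep^{\npol}(\fgl)$ and polynomial representations of $\GL_{\infty}$ recorded earlier), we conclude $X^0 \subset \rad(A^0) \subset \rad(A)$.

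For the shifted part $X^1 \subset A^1$, set $Y := X^1 \cdot X^1$. This lies in $A^0$ and is finitely generated by Proposition~\ref{prop:multiplication}(c), and the same reasoning as above gives $Y^{\omega} = ((X^1)^{\omega})^2 \subset \rad((A^0)^{\omega})$. Property \prop{A} for narrow representations then yields $Y \subset \rad(A^0)$; by Proposition~\ref{prop:fg-nilp} there exists $n$ with $Y^n = 0$, whence $(X^1)^{2n} = Y^n = 0$ and $X^1$ is nilpotent. Combining both cases gives $X \subset \rad(A)$. The main step is constructing and exploiting the $\bZ/2$-grading on $\cC$; once the narrow subalgebra $A^0$ is extracted and the shifted part is pushed into it by squaring, the problem reduces formally to the narrow case handled in \cite{tcaprimes}.
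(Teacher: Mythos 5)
Your proof is correct and takes essentially the same route as the paper: both use the canonical decomposition $A = A^0 \oplus A^1$ into narrow and parity-shifted parts, apply the narrow case from \cite{tcaprimes} to $A^0$, and handle $A^1$ by squaring it into $A^0$. The only cosmetic difference is that you phrase the reduction explicitly through \propn{A}{1} and finitely generated subobjects, whereas the paper speaks of a finite length subrepresentation of nilpotent elements; the underlying argument is identical.
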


\begin{proof}
In \cite{tcaprimes}, it is shown that $\Rep^{\npol}(\fgl)$ satisfies \prop{A}. We deduce the present result from this. Let $A$ be a commutative algebra in $\Rep^{\pol}(\fgl)$. We then have a canonical decomposition $A=A_0 \oplus A_1$ in $\Rep^{\pol}(\fgl)$ where $A_0$ is a sum of $\bS_{\lambda}(\bV)$'s and $A_1$ is a sum of $\bS_{\lambda}(\bV)[1]$'s. Note that $A_0$ is \defn{not} the degree~0 piece of the super vector space $A$: the representation $\bS_{\lambda}(\bV)$ always has even and odd elements (for $\lambda$ non-empty). One easily sees that $A_0$ is a subalgebra of $A$, and $A_1$ is an $A_0$-submodule satisfying $A_1^2 \subset A_0$.

Let $X$ be a finite length subrepresentation of $A$ consisting of nilpotent elements. Write $X=X_0 \oplus X_1$ as above; of course, every element of $X_0$ or $X_1$ is nilpotent. We can regard $A_0$ as an object in $\Rep^{\npol}(\fgl)$. Thus, by \cite{tcaprimes}, we see that $X_0$ is nilpotent, i.e., $X_0^n=0$ for some $n$. Since $X_1^2$ is a subobject of $A_0$ consisting of nilpotent elements, it too is nilpotent; thus $X_1^{2m}=0$ for some $m$. We therefore find that $X^{n+2m}=0$, and so $X$ is nilpotent. This completes the proof.
\end{proof}

\begin{corollary}
The category $\Rep^{\pol}(\fgl)$ satisfies \prop{B}.
\end{corollary}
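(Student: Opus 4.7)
The plan is to simply invoke Corollary~\ref{cor:UFB}, which states that any admissible subcategory satisfying both \prop{UF} and \prop{A} automatically satisfies \prop{B}. The two propositions immediately preceding the corollary establish precisely these hypotheses for $\Rep^{\pol}(\fgl)$: the first shows \prop{UF} (by reducing to the simple objects $\bS_\lambda(\bV)$ and $\bS_\lambda(\bV)[1]$ via Proposition~\ref{prop:simpleUF}), and the second shows \prop{A} (by decomposing an algebra $A = A_0 \oplus A_1$ and bootstrapping from the known \prop{A} for the narrow subcategory $\Rep^{\npol}(\fgl)$).

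So the proof consists of a single sentence pointing at these three results. There is no real obstacle — all the work has already been done in the preceding two propositions and in the abstract machinery of \S\ref{s:axioms}. The only thing to verify is that the hypotheses of Corollary~\ref{cor:UFB} are literally those just proven, which they are.

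\begin{proof}
This follows immediately by combining the two preceding propositions with Corollary~\ref{cor:UFB}.
\end{proof}
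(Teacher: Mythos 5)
Your proof is correct and matches the paper's own one-line argument: both simply invoke Corollary~\ref{cor:UFB}, using the two immediately preceding propositions establishing \prop{UF} and \prop{A} for $\Rep^{\pol}(\fgl)$. Nothing to add.
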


\begin{proof}
This follows from Corollary~\ref{cor:UFB}.
\end{proof}

Let $\fgl^n=\fgl \times \cdots \times \fgl$, where there are $n$ copies of $\fgl$. We define $\Rep^{\npol}(\fgl^n)$ and $\Rep^{\pol}(\fgl^n)$ in the obvious manner.

\begin{proposition}
The category $\Rep^{\pol}(\fgl^n)$ satisfies \prop{UF}, \prop{A} and \prop{B}.
\end{proposition}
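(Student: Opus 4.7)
The plan is to apply Proposition~\ref{prop:good-sub} using the diagonal Lie-algebra embedding $\Delta \colon \fgl \hookrightarrow \fgl^n$, $X \mapsto (X,\ldots,X)$. Setting $\cC = \Rep^{\pol}(\fgl^n)$ and $\cD = \Rep^{\pol}(\fgl)$, the restriction functor $\omega \colon \cC \to \cD$ induced by $\Delta$ is the candidate fiber functor, and the two hypotheses needed from $\cD$, namely \prop{UF} and \prop{A}, have already been established in the two propositions immediately preceding the statement.

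First I would spell out the structure of $\cC$. Since $\Rep^{\pol}(\fgl^n)$ is the tensor subcategory of $\Rep(\fgl^n)$ generated by the pullbacks $\bV_i$ of the standard representation from each factor and their shifts $\bV_i[1]$, a Künneth-style argument shows that $\cC$ is semisimple with simple objects of the form $\bS_{\lambda_1}(\bV_1) \boxtimes \cdots \boxtimes \bS_{\lambda_n}(\bV_n)$ and their parity shifts. Accordingly, a finitely generated object of $\cC$ is precisely a finite-length object, i.e., a finite direct sum of such simples. This also confirms that $\cC$ really is an admissible subcategory in the sense of Definition~\ref{def:admissible}, since the relevant tensor products (and hence tensor products of finitely generated objects) remain finite length.

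Next I would verify compatibility of $\cC$ and $\cD$ and the property \prop{Fin}. Under $\Delta$, each $\bV_i$ restricts to the standard representation $\bV$ of $\fgl$, so a simple of $\cC$ restricts to $\bS_{\lambda_1}(\bV) \otimes \cdots \otimes \bS_{\lambda_n}(\bV)$ (with appropriate parity shift). By the Littlewood--Richardson rule this decomposes as a finite direct sum of Schur functors $\bS_{\mu}(\bV)$ (with the corresponding shift), which lies in $\cD = \Rep^{\pol}(\fgl)$ and has finite length there. Since a finitely generated object of $\cC$ is a finite direct sum of such simples, its restriction is a finite direct sum of finite-length objects of $\cD$, hence finitely generated in $\cD$. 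Thus $\cC$ and $\cD$ are compatible admissible subcategories and $\omega$ satisfies \prop{Fin}.

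At this point Proposition~\ref{prop:good-sub}, combined with the previously established facts that $\Rep^{\pol}(\fgl)$ satisfies \prop{A} and \prop{UF}, immediately yields that $\Rep^{\pol}(\fgl^n)$ satisfies \prop{UF}, \prop{A}, and \prop{B}. The only place that requires any real effort is the Künneth-type description of $\cC$; everything downstream is formal.
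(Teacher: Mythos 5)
Your proof is correct and follows the same approach as the paper: restrict along the diagonal embedding $\fgl \hookrightarrow \fgl^n$, verify that this restriction functor satisfies \prop{Fin}, and conclude via Proposition~\ref{prop:good-sub}. The paper's proof is terser—it just cites the fact that the tensor product of two finite-length $\fgl$-representations is again finite length—whereas you spell out the semisimple decomposition and the Littlewood--Richardson argument; the extra detail is harmless and entirely consistent with the paper.
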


\begin{proof}
Regard $\fgl$ as the diagonal subalgebra of $\fgl^n$. Then the restriction functor $\Rep^{\pol}(\fgl^n) \to \Rep^{\pol}(\fgl)$ satisfies \prop{Fin}; this follows from the fact that the tensor product of two finite length representations of $\fgl$ is again finite length. Thus the result follows from Proposition~\ref{prop:good-sub}.
\end{proof}

\subsection{Algebraic representations of $\fgl$}

Let $\bV_*=\bigcup_{n \ge 1} (\bC^{n|n})^*$ be the so-called \defn{restricted dual} of $\bV$. It is naturally a representation of $\fgl$. We define $\Rep^{\nalg}(\fgl)$, the category of \defn{narrow algebraic representations}, to be the tensor subcategory of $\Rep(\fgl)$ generated by $\bV$ and $\bV_*$. Similarly, we define $\Rep^{\alg}(\fgl)$, the category of \defn{(wide) algebraic representations}, to be the tensor subcategory of $\Rep(\fgl)$ generated by $\bV$, $\bV[1]$, $\bV_*$, and $\bV_*[1]$. The category $\Rep^{\nalg}(\fgl)$ is equivalent to the category of algebraic representations of $\fgl_{\infty}$ studied in \cite{koszulcategory, penkovserganova, penkovstyrkas, infrank}. It is \emph{not} semisimple. It is not difficult to show that if $V$ and $W$ are narrow algebraic representations then $\Hom_{\fgl}(V, W[1])=0$. Thus every wide algebraic representation canonically decomposes as $V \oplus W[1]$ with $V$ and $W$ narrow, and so $\Rep^{\alg}(\fgl)$ is equivalent to a direct sum of two copies of $\Rep^{\nalg}(\fgl)$.

Let $i \colon \fgl \to \fgl \times \fgl$ be the map $i(X)=(X, -X^t)$. Then $i$ is an injective homomorphism of Lie superalgebras; we refer to $i(\fgl)$ as the \defn{twisted diagonal subalgebra} of $\fgl \times \fgl$. Let $\bV_1$ and $\bV_2$ be copies of $\bV$ on which $\fgl \times \fgl$ act through the first and second projections. Then the restriction of $\bV_1$ via $i$ is $\bV$, while the restriction of $\bV_2$ is $\bV_*$. It follows that polynomial representations of $\fgl \times \fgl$ restrict to algebraic representations of $\fgl$ via $i$. In particular, we see that $\Rep^{\pol}(\fgl \times \fgl)$ and $\Rep^{\alg}(\fgl)$ are compatible admissible subcategories.

\begin{proposition}
Condition \prop{Stab} holds.
\end{proposition}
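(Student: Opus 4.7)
The plan is to exhibit, for any $M \in \Rep^{\pol}(\fgl \times \fgl)$ and any finite dimensional subspace $X \subset M$, a Lie subalgebra $\fp \subset \fg = \fgl \times \fgl$ of the form $\fp = \fp_N \times \fp_N$, where $\fp_N \subset \fgl$ is a parabolic preserving a chosen finite dimensional subspace $V_N$. Concretely, fix $V_N = \bC^{N|N}$ inside $\bV$ with a chosen complement $V^N$, and take $\fp_N = \{A \in \fgl : A(V_N) \subset V_N\}$, the block upper triangular parabolic. For $N$ sufficiently large this $\fp$ will satisfy both requirements of \prop{Stab}.

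The first step is a reduction to a finite dimensional truncation of $M$. The category $\Rep^{\pol}(\fgl \times \fgl)$ is semisimple, with simples of the form $\bS_\lambda(\bV_1) \otimes \bS_\mu(\bV_2)$ (possibly with a parity shift). Since $X$ is finite dimensional, it lies in a finite direct sum of such simples, and its projection to each simple summand involves only finitely many basis vectors of $\bV$ in each factor. Hence $X \subset M_N := \bigoplus_{\alpha \in F} \bS_{\lambda_\alpha}(V_N) \otimes \bS_{\mu_\alpha}(V_N)$ for $N$ large, and $M_N$ is finite dimensional because Schur functors of finite dimensional super vector spaces are finite dimensional.

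Next I would verify the stabilizer property. Decompose $\fp_N = \mathrm{End}(V_N) \oplus \mathrm{End}(V^N) \oplus \mathrm{Hom}(V^N, V_N)$, with each summand extended by zero on the complement. The latter two summands act as zero on $V_N$, by inspection of block matrices, and therefore annihilate all of $M_N$ since the action on tensor and Schur constructions is factor-wise. Consequently $\fp = \fp_N \times \fp_N$ acts on $M_N$ through the finite dimensional subalgebra $\mathrm{End}(V_N) \times \mathrm{End}(V_N)$, so $\cU(\fp) \cdot X \subset M_N$ is finite dimensional.

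Finally, for the complementarity $\fp + i(\fgl) = \fg$, observe that the quotient $\fg/i(\fgl) \cong \fgl$ is realized by the map $(A,B) \mapsto B + A^t$. The image of $\fp$ under this map is $\fp_N + \fp_N^t$, and since transposition swaps $\mathrm{Hom}(V^N, V_N)$ with $\mathrm{Hom}(V_N, V^N)$ while preserving $\mathrm{End}(V_N)$ and $\mathrm{End}(V^N)$, the sum $\fp_N + \fp_N^t$ covers all four block components of $\fgl = \mathrm{End}(\bV)$. The main conceptual step is choosing $\fp$; the remaining verifications are direct block matrix computations, so I do not anticipate any serious obstacle.
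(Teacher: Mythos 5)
Your argument is correct. The paper's proof is more economical: it takes $\fp = \fb \times \fb$, where $\fb$ is the standard (infinite\nobreakdash-rank) Borel subalgebra of $\fgl$, the same choice for every $M$ and $X$. Since $\fb$ preserves each finite truncation $V_N$, the Borel submodule generated by a finite-dimensional subspace of a polynomial representation is automatically finite-dimensional, and the decomposition $\fgl \times \fgl = (\fb \times \fb) + i(\fgl)$ follows by a block-matrix calculation that is essentially the one you do for the parabolic. Your parabolic $\fp_N$ in fact contains $\fb$, so both the complementarity and the finiteness you verify would already hold for the smaller, $X$-independent $\fb \times \fb$; the extra truncation step in your argument is correct but not needed. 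The underlying principle in both proofs is identical — choose a block-triangular subalgebra of $\fg$ that preserves a finite-dimensional truncation and, together with the twisted diagonal $i(\fgl)$, spans all of $\fg$ — so this should be viewed as the same approach, with the paper opting for the uniformly smallest such subalgebra.
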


\begin{proof}
Let $\fb$ be the standard Borel subalgebra of $\fgl$. If $V$ is a polynonimal representation of $\fgl$, then the $\fb$-submodule generated by any element is finite dimensional. The same applies to $\fgl \times \fgl$ and $\fb \times \fb$. Since $\fgl \times \fgl = (\fb \times \fb) + i(\fgl)$, the result follows.
\end{proof}

\begin{proposition}
The restriction functor $\Rep^{\pol}(\fgl \times \fgl) \to \Rep^{\alg}(\fgl)$ satisfies \prop{Gen}.
\end{proposition}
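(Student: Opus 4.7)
The natural candidate is $R = \Sym(\bV_1 \otimes \bV_2) \in \Comm(\Rep^{\pol}(\fgl \times \fgl))$. Under restriction via the twisted diagonal we have $R^\omega = \Sym(\bV \otimes \bV_*)$, and the $\fgl$-equivariant trace pairing $\bV \otimes \bV_* \to \bk$ extends by the universal property of $\Sym$ to an algebra homomorphism $\epsilon \colon R^\omega \to \bk$. I set $\fm = \ker(\epsilon)$. Condition (i) of \prop{Gen} is then immediate: $R^\omega/\fm \cong \bk = \bone_{\cD}$ via the unit map of $R^\omega$. Integrality of $R$ in $\cC$ follows from the super Cauchy decomposition $R = \bigoplus_\lambda \bS_\lambda(\bV_1) \otimes \bS_\lambda(\bV_2)$, whose summands are all nonzero in the $\fgl_{\infty|\infty}$ setting; for nonzero subrepresentations $X,Y \subset R$ containing Schur summands indexed by $\lambda$ and $\mu$, the product in $R$ contains the $(\lambda+\mu)$-summand by Littlewood--Richardson, hence $XY \ne 0$.

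For condition (ii), the restriction of $\epsilon$ to each Cauchy summand $\bS_\lambda(\bV) \otimes \bS_\lambda(\bV_*)$ is the canonical invariant pairing, hence nonzero. Consequently any nonzero $(\fgl \times \fgl)$-subrepresentation $X \subset R$ satisfies $X^\omega \not\subset \fm$, so its image in $R^\omega/\fm = \bk$ is all of $\bk$. If $M \in \Mod_R$ is torsion with $XY = 0$ for a finitely generated $Y$ and nonzero $X$, reducing modulo $\fm$ yields $\bk \cdot \bar Y = 0$, forcing $\bar Y = 0$; thus the functor kills $\Mod_R^{\tors}$. Exactness of $M \mapsto M^\omega/\fm M^\omega$ reduces to $\mathrm{Tor}_1^{R^\omega}(\bk, M^\omega) = 0$ for $R$-modules $M$ coming from $\cC$, which I would establish via the Koszul resolution of $\bk$ over the super polynomial algebra $R^\omega$, using the polynomial $(\fgl \times \fgl)$-structure on $M^\omega$ to verify that the Koszul differentials remain injective.

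Condition (iii), the equivalence $\Mod_R^{\gen} \simeq \Rep^{\alg}(\fgl)$, is the main obstacle. My plan is to construct a quasi-inverse $\Psi$ explicitly. Since $\Rep^{\alg}(\fgl)$ is generated as a tensor category by $\bV \cong \bV_1^\omega$, $\bV[1] \cong \bV_1[1]^\omega$, $\bV_* \cong \bV_2^\omega$, and $\bV_*[1] \cong \bV_2[1]^\omega$, every $N \in \Rep^{\alg}(\fgl)$ admits a lift $N' \in \Rep^{\pol}(\fgl \times \fgl)$ with $(N')^\omega \cong N$. Set $\Psi(N) = R \otimes_{\bk} N'$, a free $R$-module; then the computation $\Psi(N)^\omega/\fm \Psi(N)^\omega \cong \bk \otimes_{\bk} N = N$ yields essential surjectivity of the forward functor. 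The heart of the argument is full faithfulness modulo torsion: one must show that $\Hom_R(R \otimes N_1', R \otimes N_2') \cong \Hom_{\cC}(N_1', R \otimes N_2')$ reduces, after inverting $R$-torsion, to $\Hom_{\fgl}(N_1, N_2)$. Geometrically this reflects that $\Spec(R)$ has generic $(\fgl \times \fgl)$-stabilizer equal to the twisted diagonal $\fgl$; proving it rigorously should reduce, via the Cauchy decomposition of $R$ and the Schur structure of $\Rep^{\pol}(\fgl \times \fgl)$, to identifying generic $\fgl$-invariants of $R^\omega$ with $\bk$, so that $(\fgl \times \fgl)$-equivariant maps into $R \otimes N_2'$ are generically determined by their residues modulo $\fm$. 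The well-definedness of $\Psi$ up to torsion (independent of the lift $N'$) then follows from the same Schur-theoretic analysis.
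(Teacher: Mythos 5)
Your choice of $R = \Sym(\bV_1 \otimes \bV_2)$ and the ideal $\fm$ coincide exactly with the paper's: the paper writes $\fm$ down via explicit generators $x_{i,j}-\delta_{i,j},\ x'_{i,j}-\delta_{i,j},\ y_{i,j},\ y'_{i,j}$, and these are precisely the generators of the kernel of the evaluation/trace map you describe. So the setup is identical, and condition (i), as you say, is immediate.

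The divergence is in how (ii) and (iii) are handled. The paper does not prove these conditions from scratch: it cites \cite[\S 3.5]{sym2noeth}, where the analogous statements are established for narrow polynomial representations of $\fgl_\infty$, and observes that the same arguments carry over because $R$ is itself a narrow polynomial representation. Your attempt to reconstruct (ii) and (iii) independently is where the gaps lie. For ``kills $\Mod_R^{\tors}$'' your sketch is essentially correct once one uses that $\bar{x}\bar{y}$ in $M^\omega/\fm M^\omega$ is computed from the $R^\omega/\fm$-module structure and that $X^\omega \to R^\omega/\fm$ is surjective, so some element of $X^\omega$ reduces to $1$. But for exactness, stating ``verify that the Koszul differentials remain injective'' is not the right reduction: the Koszul complex on the generators of $\fm$ is already exact over the super polynomial ring $R^\omega$, and what one actually needs is that tensoring this resolution with $M^\omega$ remains exact, i.e.\ a flatness/regularity statement for those $R^\omega$-modules arising from objects of $\Rep^{\pol}(\fgl \times \fgl)$. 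That statement is genuinely nontrivial and is the substantive content of the cited section of \cite{sym2noeth}; your sketch does not supply it. Likewise, condition (iii) — the equivalence $\Mod_R^{\gen} \simeq \Rep^{\alg}(\fgl)$ — is the main theorem of that reference, and your plan (construct a quasi-inverse via lifts $N'$, prove full faithfulness modulo torsion by ``identifying generic $\fgl$-invariants of $R^\omega$ with $\bk$'') correctly locates where the difficulty is, but leaves the actual argument as a program rather than a proof: well-definedness of the lift up to torsion and full faithfulness in the quotient category are exactly the hard points and are not resolved.

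In short: you found the right $R$ and $\fm$ and the right structure for the proof, but the proposal is not a complete proof of (ii) and (iii). The paper's route — delegating these to the existing noetherianity/generic-category machinery of \cite{sym2noeth} and noting that $R$ being a narrow polynomial representation lets those arguments apply verbatim — is not reproducible from scratch at the level of detail your sketch provides, and the Koszul/quasi-inverse outline would need a significant amount of additional work (essentially re-deriving that machinery) to close the gap.
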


\begin{proof}
Let $\{e_i,f_j\}$ be a basis for $\bV$, where the $e$ vectors are even and the $f$ vectors are odd. Let $R=\Sym(\bV_1 \otimes \bV_2)$, regarded as an algebra object in $\Rep^{\pol}(\fgl \times \fgl)$. Let
\begin{displaymath}
x_{i,j}=e_i \otimes e_j, \quad x'_{i,j}=f_i \otimes f_j, \quad y_{i,j} = e_i \otimes f_j, \quad y'_{i,j}=f_i \otimes e_j.
\end{displaymath}
Then $R$ is the super polynomial algebra in these variables; note that the $x$ and $x'$ variables are even, while the $y$ and $y'$ variables are odd. Let $\fm \subset R$ be the ideal generated by the elements
\begin{displaymath}
x_{i,j}-\delta_{i,j}, \quad x'_{i,j}-\delta_{i,j}, \quad y_{i,j}, \quad y'_{i,j}.
\end{displaymath}
Of course, $R/\fm \cong \bk$ and so (i) of \prop{Gen} holds. The ideal $\fm$ is stable under the twisted diagonal subalgebra $i(\fgl)$. It follows that if $M$ is a module object for $R$ in the category $\Rep^{\pol}(\fgl \times \fgl)$ then $M/\fm M$ is a $\fgl$-module, necessarily algebraic. In \cite[\S 3.5]{sym2noeth}, it is shown (ii) and (iii) of \prop{Gen} hold. (Actually, \cite{sym2noeth} only works with narrow polynomial representations of $\fgl_{\infty}$, but the same arguments apply in the present situation, essentially because $R$ itself is a narrow polynomial representation.)
\end{proof}

\begin{proposition} \label{prop: glUF}
The category $\Rep^{\alg}(\fgl)$ satisfies \prop{UF}.
\end{proposition}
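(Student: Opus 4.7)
The strategy is to apply Proposition~\ref{prop:simpleUF}, which reduces the task to two conditions: (a) every finitely generated object of $\Rep^{\alg}(\fgl)$ has finite length, and (b) every simple object $L$ of $\Rep^{\alg}(\fgl)$ is \emph{good}, meaning $L = \cU_{\le n}(\fgl) \cdot V$ for some $n$ and some finite-dimensional subspace $V$.

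For condition (a), I would adapt the decomposition argument used in the polynomial case: the vanishing $\Hom_{\fgl}(U, W[1]) = 0$ for narrow algebraic $U, W$ yields a canonical splitting $\Rep^{\alg}(\fgl) \simeq \Rep^{\nalg}(\fgl) \oplus \Rep^{\nalg}(\fgl)[1]$, reducing the claim to the narrow case. Parallel to the equivalence $\Rep^{\npol}(\fgl_{\infty|\infty}) \simeq \Rep^{\pol}(\fgl_\infty)$ recorded in the excerpt, one identifies $\Rep^{\nalg}(\fgl_{\infty|\infty})$ with the classical category $\Rep^{\alg}(\fgl_\infty)$ studied in \cite{infrank, penkovserganova, koszulcategory, penkovstyrkas}, in which finite generation is known to imply finite length.

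For condition (b), I would first show that the tensor modules $\bV^{\otimes p} \otimes \bV_*^{\otimes q}$ are good. Starting from the vector $v = e_1 \otimes \cdots \otimes e_p \otimes e_{p+1}^* \otimes \cdots \otimes e_{p+q}^*$ with distinct indices, one can mimic the argument used in the $\Rep^{\pol}(\fgl)$ case: applying operators $E_{i,j} \in \fgl$ with $i$ lying outside $\{1,\ldots,p+q\}$ avoids the cross-terms that arise from simultaneous action on both $\bV$ and $\bV_*$ factors (which come from the sign in the dual action), and a position-by-position swap reaches every basis vector within a bounded number of steps, after enlarging the generating subspace $V$ slightly beyond $\bk v$ to accommodate the intermediate symmetrizations.

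The main obstacle is the passage from these tensor modules to the simples. Since $\Rep^{\alg}(\fgl)$ is not semisimple and the simple $L_{\lambda,\mu}$ realizes as the socle (not a quotient) of $\bS_\lambda(\bV) \otimes \bS_\mu(\bV_*)$, goodness does not descend for free. The route I would pursue is a highest-weight argument: each $L_{\lambda,\mu}$ carries a distinguished highest-weight vector $v$ generating it as a $\fgl$-module, and weight-depth bookkeeping imported from the classical algebraic theory of $\fgl_\infty$ via the equivalence above should yield an explicit bound of the form $L_{\lambda,\mu} = \cU_{\le n}(\fgl) \cdot v$ with $n$ depending only on $|\lambda| + |\mu|$. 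A fallback uses the contragredient duality $L_{\lambda,\mu}^\vee \cong L_{\mu,\lambda}$, which realizes each simple both as a socle and (after dualizing) as a cosocle of appropriate good tensor modules, reducing goodness for $L$ to goodness for a quotient of a good module, which is automatic.
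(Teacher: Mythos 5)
Your overall strategy matches the paper's: reduce to Proposition~\ref{prop:simpleUF}, establish goodness of the tensor modules $V_{n,m}=\bV^{\otimes n}\otimes\bV_*^{\otimes m}$ (and their parity shifts), and pass from those to the simples. The place where you depart from the paper, and where your argument is incomplete, is exactly the step you flag as ``the main obstacle.'' The paper does not attempt to locate $L_{\lambda,\mu}$ inside $\bS_\lambda(\bV)\otimes\bS_\mu(\bV_*)$ at all; instead it exploits the equivalence $\Mod_R^{\gen}\simeq\Rep^{\alg}(\fgl)$ coming from \prop{Gen}. A simple $L$ corresponds to $T(M)$ for some $R$-module $M$, and since $M$ is a polynomial $\fgl\times\fgl$-representation there is a surjection $R\otimes V\twoheadrightarrow M$ with $V$ a sum of $(\bV_1^{\otimes n}\otimes\bV_2^{\otimes m})[k]$'s; because the quotient functor $T$ is exact and cocontinuous, one summand $T(R\otimes(\bV_1^{\otimes n}\otimes\bV_2^{\otimes m})[k])$ already surjects onto the simple $T(M)$, and transporting back gives $V_{n,m}[k]\twoheadrightarrow L$. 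So every simple really is a quotient of a good tensor module, and the obstacle you raise dissolves.

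Your two proposed workarounds are not yet proofs. The ``weight-depth bookkeeping'' route is too vague: the whole content of \prop{UF} is producing a \emph{uniform} bound $n$, and you give no mechanism for it. Your duality fallback is closer to a real alternative: if contragredient duality $(-)^\vee$ is an exact contravariant auto-equivalence of $\Rep^{\alg}(\fgl)$ with $(V_{n,m})^\vee\cong V_{m,n}$, and every simple embeds into some $V_{m,n}$, then dualizing the inclusion yields the required surjection. But you assert these facts without establishing them, and one must take care to use the restricted/contragredient dual (the full linear dual leaves the category). Supplying those facts would give a genuinely different proof of the key step; as written there is a gap. Finally, your argument that $V_{n,m}$ is good is more computational than the paper's, which uses a cleaner uniformity trick: choose finitely many tensor basis vectors $e_T$ with $T\in[s]^r$ that generate $V_{n,m}$ (possible by finite length), find one $k$ with $e_T\in\cU_{\le k}W$ for $T\in[r+s]^r$, and then extend to arbitrary $T$ by conjugating with a permutation of $[\infty]$ fixing $[s]$, under which both $W$ and $\cU_{\le k}$ are stable. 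Your direct approach (``avoid cross-terms with $E_{ij}$, $i$ outside the support'') could plausibly work but as stated it does not clearly produce a uniform bound.
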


\begin{proof}
We use the ``good'' terminology from the proof of Proposition~\ref{prop:simpleUF}. Let $\{e_i,f_j\}$ be as in the previous proof, and let $\{e_i^*,f_j^*\}$ be the dual basis of $\bV_*$. We let $[r]=\{1,\ldots,r\}$ for $r \le \infty$.

We claim that $V_{n,m}=\bV^{\otimes n} \otimes \bV^{\otimes m}$ is good. Let $r=n+m$ and for $T \in [\infty]^r$ put
\begin{displaymath}
e_T = e_{T_1} \otimes \cdots \otimes e_{T_n} \otimes e_{T_{n+1}}^* \cdots \otimes e_{T_{n+m}}^*.
\end{displaymath}
Since $V_{n,m}$ has finite length, there is some $s$ such that the $e_T$ with $T \in [s]^r$ generate it. Let $W$ be the span of these $e_T$'s. We thus have $V_{n,m}=\bigcup_{k \ge 1} \cU_{\le k} \cdot W$. We can thus find a single integer $k$ such that $e_T \in \cU_k \cdot W$ for any $T \in [r+s]^r$. Given any $T$, there exists a permutation $\sigma$ of $[\infty]$ fixing $1, \ldots, s$ such that $\sigma T \in [r+s]^r$. Since $e_{\sigma T} \in \cU_{\le k} \cdot W$ and $\cU_{\le k}$ and $W$ are stable by $\sigma$, we find $e_T \in \cU_{\le k} \cdot W$. Thus $V_{n,m}=\cU_{\le k} \cdot W$, and so the claim follows.

Since $V_{n,m}$ is good, so is $V_{n,m}[1]$. Any simple object is a quotient of some such representation, and therefore good. Thus \prop{UF} follows from Proposition~\ref{prop:simpleUF}.

(We now explain the claim about simple objects. Let $R$ be as in the previous proof, and let $T \colon \Mod_R \to \Mod_R^{\gen}$ be the quotient functor. Let $L$ be a simple object of $\Rep^{\alg}(\fgl)$, and let $T(M)$ be the corresponding simple object of $\Mod_R^{\gen}$, where $M$ is an $R$-module. Since $M$ is a polynomial representation of $\fgl \times \fgl$, we can find a surjection $R \otimes V \to M$, where $V$ is a sum of representations of the form $(\bV_1^{\otimes n} \otimes \bV_2^{\otimes m})[k]$. We thus have a surjection $T(R \otimes V) \to T(M)$. Since $T(M)$ is simple, it follows that some summand maps surjectively to it, that is, we have a surjection $T(R \otimes (\bV_1^{\otimes n} \otimes \bV_2^{\otimes m})[k]) \to T(M)$. Passing through the equivalence $\Mod_R^{\gen} \cong \Rep^{\alg}(\fgl)$, this yields a surjection $V_{n,m}[k] \to L$, as required.)
\end{proof}


\begin{corollary} \label{cor: glAB}
The category $\Rep^{\alg}(\fgl)$ satisfies \prop{A} and \prop{B}.
\end{corollary}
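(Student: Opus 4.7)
The plan is to combine three preceding ingredients. For \prop{A}, the natural move is to invoke Corollary~\ref{cor:StabA} with the pair $(\cC, \cD) = (\Rep^{\pol}(\fgl \times \fgl), \Rep^{\alg}(\fgl))$, where the ``subalgebra'' is the twisted diagonal $i(\fgl) \subset \fgl \times \fgl$ and $\omega$ is the corresponding restriction functor. The hypotheses of that corollary have all been verified in the preceding three propositions: $\cC$ satisfies \prop{A} (shown for $\Rep^{\pol}(\fgl^n)$), the functor $\omega$ satisfies \prop{Gen} (via the polynomial ring $R = \Sym(\bV_1 \otimes \bV_2)$ with its distinguished ideal $\fm$), and the pair satisfies \prop{Stab} (via the Borel argument). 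So Corollary~\ref{cor:StabA} directly yields \prop{A} for $\cD = \Rep^{\alg}(\fgl)$.

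For \prop{B}, the ingredients are already in hand: Proposition~\ref{prop: glUF} shows that $\Rep^{\alg}(\fgl)$ satisfies \prop{UF}, and we have just obtained \prop{A}. Corollary~\ref{cor:UFB} then gives \prop{B} immediately.

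There is no real obstacle here; the statement is essentially an assembly of Propositions established earlier in the subsection. The only thing worth double-checking is that the categorical setup matches Corollary~\ref{cor:StabA} verbatim: namely, that $i(\fgl) \subset \fgl \times \fgl$ is a Lie subalgebra (it is, being the image of an injective Lie superalgebra homomorphism), and that $\Rep^{\pol}(\fgl \times \fgl)$ and $\Rep^{\alg}(\fgl)$ are \emph{compatible} admissible subcategories — this was verified in the paragraph preceding the \prop{Stab} proposition, where it is shown that the $i$-restrictions of $\bV_1$ and $\bV_2$ are $\bV$ and $\bV_*$ respectively, so polynomial representations of $\fgl \times \fgl$ restrict to algebraic representations of $\fgl$. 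Given this, the proof reduces to a one-line citation of Corollary~\ref{cor:StabA} followed by a one-line citation of Corollary~\ref{cor:UFB}.
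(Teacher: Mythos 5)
Your proposal is correct and matches the paper's own proof exactly: the paper likewise obtains \prop{A} by invoking Corollary~\ref{cor:StabA} (using the three preceding propositions establishing \prop{A} for $\Rep^{\pol}(\fgl\times\fgl)$, \prop{Gen} for the restriction functor, and \prop{Stab}) and then obtains \prop{B} from Corollary~\ref{cor:UFB} together with Proposition~\ref{prop: glUF}. Your version simply spells out the hypothesis-checking that the paper leaves implicit.
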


\begin{proof}
We obtain \prop{A} from Corollary~\ref{cor:StabA} and \prop{B} from Corollary~\ref{cor:UFB}.
\end{proof}

\begin{proposition} \label{prop:spec-noeth}
Let $A$ be a commutative algebra in $\Rep^{\alg}(\fgl)$. Suppose $A$ is generated over a noetherian coefficient ring by a finite length subrepresentation. Then $\Spec_{\fgl}(A)$ is a noetherian topological space.
\end{proposition}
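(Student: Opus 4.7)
The plan is to reduce topological noetherianity of $\Spec_{\fgl}(A)$ to the corresponding statement for finitely generated polynomial $\fgl$-algebras, which is the super analog of Draisma's theorem and should be extracted from \cite{tcaprimes}. The reduction proceeds in two stages: first lifting into $\Rep^{\pol}(\fgl\times\fgl)$ via the generic equivalence underlying \prop{Gen}, and then restricting to $\Rep^{\pol}(\fgl)$ via the untwisted diagonal embedding $\fgl\hookrightarrow\fgl\times\fgl$.

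For the lift, let $R=\Sym(\bV_1\otimes\bV_2)$ and $\fm\subset R^{\omega}$ be as in the construction showing that the restriction $\Rep^{\pol}(\fgl\times\fgl)\to\Rep^{\alg}(\fgl)$ satisfies \prop{Gen}. The generic equivalence $\Mod_R^{\gen}\simeq\Rep^{\alg}(\fgl)$, being monoidal, lifts $A$ to an $R$-torsionfree commutative $R$-algebra $\widetilde A$ in $\Rep^{\pol}(\fgl\times\fgl)$ with $A\cong\widetilde A^{\omega}/\fm\widetilde A^{\omega}$. Since $A$ is finitely generated over its noetherian coefficient ring by a finite length subrepresentation $X$, one arranges $\widetilde A$ to be finitely generated over the same coefficient ring by a finite length polynomial subrepresentation of $\Rep^{\pol}(\fgl\times\fgl)$: lift generators of $X$ to a finite length polynomial representation of $\fgl\times\fgl$ and adjoin the polynomial generators of $R$.

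By \prop{A} for $\Rep^{\alg}(\fgl)$ (Corollary~\ref{cor: glAB}), the assignment $\fa\mapsto\rad(\fa^{\omega})$ is an order embedding from radical $\fgl$-ideals of $A$ into radical ideals of $A^{\omega}$. Combined with the lift, and with \prop{A} for the composite $\Rep^{\pol}(\fgl\times\fgl)\to\SVec$ (which holds by Proposition~\ref{prop:compA} together with Proposition~\ref{prop:good-sub} and Corollary~\ref{cor: glAB}), a strictly ascending chain of radical $\fgl$-ideals of $A$ pulls back to a strictly ascending chain of radical $(\fgl\times\fgl)$-ideals of $\widetilde A$. It therefore suffices to show $\Spec_{\fgl\times\fgl}(\widetilde A)$ is a noetherian topological space. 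Restricting along the untwisted diagonal $\fgl\hookrightarrow\fgl\times\fgl$, every $(\fgl\times\fgl)$-ideal is automatically diagonal-$\fgl$-stable, so descending chains of closed subsets of $\Spec_{\fgl\times\fgl}(\widetilde A)$ embed into descending chains of closed subsets of $\Spec_{\fgl}(\widetilde A)$. The diagonal restriction satisfies \prop{Fin} (tensor products of finite length polynomial representations of $\fgl$ are finite length, by the Littlewood--Richardson rule), so $\widetilde A$ remains finitely generated over the coefficient ring by a finite length subrepresentation of $\Rep^{\pol}(\fgl)$, and topological noetherianity of $\Spec_{\fgl}(\widetilde A)$ then follows from \cite{tcaprimes}.

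The main obstacle will be the lifting step: constructing $\widetilde A$ with the correct finite generation properties and verifying that the pullback of radical ideals is really an order embedding. A secondary subtlety is that $\fm$ is only stable under the twisted diagonal $i(\fgl)\subset\fgl\times\fgl$, not under all of $\fgl\times\fgl$, so care is needed when passing between $\fgl$-ideals of $A$ and $(\fgl\times\fgl)$-ideals of $\widetilde A$ containing $\fm\widetilde A$.
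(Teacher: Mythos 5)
Your proposal and the paper's proof take genuinely different routes, and yours has an unresolved gap at exactly the spot you flag.

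The paper never lifts $A$ out of $\Rep^{\alg}(\fgl)$. It restricts $A$ along the inclusion of $\fg \cong \fgl_\infty$ sitting diagonally inside the even part $\fgl_\infty\times\fgl_\infty$ of $\fgl_{\infty|\infty}$, passes to $B = A/\rad(A)$ (noting that $\rad(A)$ is $\fg$-stable even though it is not $\fgl_{\infty|\infty}$-stable), and invokes the ascending chain condition for radical $\fgl_\infty$-ideals of finitely generated algebraic $\fgl_\infty$-algebras, which is proved in \cite{ES} as a variant of Draisma's theorem. An ascending chain of $\fgl$-radical ideals $\fa_\bullet$ of $A$ gives an ascending chain of radical $\fg$-ideals $\rad(\fa_\bullet)$ of $B$, which stabilizes by \cite{ES}; property \prop{A} (Corollary~\ref{cor: glAB}) then forces $\fa_\bullet$ to stabilize. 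That is the whole proof --- a two-step restriction-and-transfer argument with no lifting.

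Your proposal instead lifts $A$ to an $R$-algebra $\widetilde A$ in $\Rep^{\pol}(\fgl\times\fgl)$ and asserts that a strictly ascending chain of radical $\fgl$-ideals of $A$ ``pulls back'' to a strictly ascending chain of radical $(\fgl\times\fgl)$-ideals of $\widetilde A$. This is the missing step, and you correctly identify it as ``the main obstacle.'' The natural operation --- take the preimage of $\fa$ in $\widetilde A^\omega$, then apply $\lfloor\cdot\rfloor_{\fgl\times\fgl}$ --- is order-preserving, but \emph{strict} order-preservation is not automatic, since $\lfloor\cdot\rfloor$ discards information and $\fm$ is only stable under the twisted diagonal $i(\fgl)$, not under $\fgl\times\fgl$. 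Property \prop{A} for $\Rep^{\alg}(\fgl)$ gives an order embedding from radical $\fgl$-ideals of $A$ into ordinary radical ideals of $A^\omega$ in $\SVec$, but that does not, by itself, produce the claimed order embedding into $(\fgl\times\fgl)$-ideals of $\widetilde A$; one would need a relative version of the argument in Proposition~\ref{prop:Agen} exploiting \prop{Gen} and $R$-torsion-freeness, which your sketch does not supply. Compared to the paper's argument, your route is both longer and incomplete, and it also relies on \cite{tcaprimes} where the paper uses the more directly applicable reference \cite{ES}. If you want to salvage the lifting approach, the place to start is showing that for a $\fgl$-ideal $\fa$ of $A$, the saturated $R$-submodule of $\widetilde A$ corresponding to $\fa$ under the generic equivalence is actually a $(\fgl\times\fgl)$-ideal and recovers $\fa$ under $(-)^\omega/\fm(-)^\omega$; but the paper's reduction to the even diagonal $\fgl_\infty$ avoids all of this.
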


\begin{proof}
Let $\fg \subset \fgl_{\infty|\infty}$ be the diagonal $\fgl_{\infty}$ inside of the even subalgebra of $\fgl$, and let $B=A/\rad(A)$. The ideal $\rad(A)$ is not $\fgl_{\infty|\infty}$-stable, but it is $\fg$ stable, and so $B$ is a $\fg$-algebra. One easily sees that $B$ belongs to $\Rep^{\alg}(\fg)$ and that it is equivariantly finitely generated. A variant of Draisma's theorem \cite{draisma} for algebraic representations, proved in \cite{ES}, implies that radical $\fg$-ideals of $B$ satisfy the ascending chain condition. Suppose now that $\fa_1 \subset \fa_2 \subset \cdots$ is an ascending chain of $\fgl$-radical ideals of $A$. Then $\rad(\fa_1) \subset \rad(\fa_2) \subset \cdots$ is (or, rather, corresponds to) an ascending chain of radical $\fg$-ideals of $B$, and thus stabilizes. Thus, by \prop{A}, the original chain stabilizes. This establishes the result.
\end{proof}

\subsection{The isomeric algebra} \label{s:isomericEx}

Let $\bC^{1|1}$ have basis $\epsilon_0, \epsilon_1$ and let $\beta \colon \bC^{1|1} \to \bC^{1|1}$ be the map defined by $\beta(\epsilon_i)=\epsilon_{i+1}$. Let $\bW=\bV \otimes \bC^{1|1}$, and let $\alpha \colon \bW \to \bW$ be the map $\id_{\bV} \otimes \beta$. The \defn{isomeric Lie superalgebra}\footnote{Commonly known as the ``queer lie superalgebra"} $\fq$ is the subalgebra of $\fgl(\bW)$ consisting of those elements that supercommute with $\alpha$. (Here $\fgl(\bW)$ means the copy of $\fgl_{\infty|\infty}$ associated to $\bW$.) We define the category $\Rep^{\pol}(\fq)$ of \defn{polynomial representations} to be the subcategory of $\Rep(\fq)$ generated by $\bW$. This category is semisimple, and somewhat similar to the category of polynomial representations of $\fgl$; see \cite[\S 3]{chengwang}. We define the category $\Rep^{\alg}(\fq)$ of \defn{algebraic representations} to be the subcategory generated by $\bW$ and $\bW_*$. This category was studied in \cite{grantcharov}.

If $X$ is an element of $\fgl(\bV)$ then $X \otimes 1$ is an endomorphism of $\bW$ that supercommutes with $\alpha$, and thus is an element of $\fq$. This defines an embedding of Lie superalgebras $\fgl=\fgl(\bV) \to \fq$. By putting an appropriate order on the basis of $\bW$, this embedding is given in terms of matrices by
\begin{displaymath}
\begin{pmatrix}
A &B\\
C &D
\end{pmatrix}
\mapsto
\begin{pmatrix}
A & 0 &0&B\\
0&D&C&0\\
0&B&A&0\\
C&0&0&D
\end{pmatrix}.
\end{displaymath}
Here the source matrix is decomposed into blocks according to the decomposition of $\bV$ into its even and odd pieces. We have:

\begin{proposition} \label{prop: isomericRes}
The restriction function $\Rep^{\alg}(\fq) \to \Rep^{\alg}(\gl)$ satisfies \prop{Fin}.
\end{proposition}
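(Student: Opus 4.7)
My plan is to compute the restriction of the tensor generators of $\Rep^{\alg}(\fq)$ explicitly, verify that these restrictions are finitely generated in $\Rep^{\alg}(\fgl)$, and then propagate this to arbitrary finitely generated objects. The embedding $\fgl=\fgl(\bV) \hookrightarrow \fq$ is given by $X \mapsto X \otimes 1$ acting on $\bW = \bV \otimes \bC^{1|1}$, so the $\fgl$-action on $\bW$ simply ignores the $\bC^{1|1}$ tensor factor. Using the super vector space decomposition $\bC^{1|1} \cong \bC \oplus \bC[1]$, this produces canonical isomorphisms of $\fgl$-modules
\begin{displaymath}
\bW|_{\fgl} \cong \bV \oplus \bV[1], \qquad \bW_*|_{\fgl} \cong \bV_* \oplus \bV_*[1],
\end{displaymath}
both of which live in $\Rep^{\alg}(\fgl)$ and are finite length (hence finitely generated) there.

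Next, by distributing the tensor product over these direct sum decompositions, each tensor generator $\bW^{\otimes a} \otimes \bW_*^{\otimes b}[k]$ of $\Rep^{\alg}(\fq)$ restricts to a finite direct sum of $\fgl$-modules of the shape $\bV^{\otimes a} \otimes \bV_*^{\otimes b}[k']$. By the argument already used in the proof of Proposition~\ref{prop: glUF} (invoking finite length of $V_{n,m}$ in $\Rep^{\alg}(\fgl)$), each such summand is finitely generated in $\Rep^{\alg}(\fgl)$, so the restriction of any tensor generator is finitely generated.

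To finish, I would show that every finitely generated $M \in \Rep^{\alg}(\fq)$ admits a surjection from a finite direct sum $T = \bigoplus_{i=1}^r \bW^{\otimes a_i} \otimes \bW_*^{\otimes b_i}[k_i]$. Granted this, $T|_{\fgl}$ is finitely generated in $\Rep^{\alg}(\fgl)$ by the previous step, and hence its quotient $M|_{\fgl}$ is as well. Exactness and faithfulness of the restriction functor ensure the surjection survives restriction, giving \prop{Fin}.

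The main obstacle is establishing the surjection. It reduces to two structural facts about $\Rep^{\alg}(\fq)$: every simple object is a quotient of a tensor generator $\bW^{\otimes a} \otimes \bW_*^{\otimes b}[k]$, and every finitely generated object has finite length. These are the direct analogues for $\fq$ of the ingredients used in Proposition~\ref{prop: glUF}, where they were deduced by proving a \prop{Gen}-style presentation of $\Rep^{\alg}(\fgl)$ as the generic category of a polynomial algebra. I would expect the same strategy to apply here using the structural results on the isomeric category from \cite{grantcharov}; alternatively, one can appeal directly to those results to extract the needed surjection without reproving \prop{Gen} in the isomeric setting.
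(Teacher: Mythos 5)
Your computation of the restrictions $\bW|_{\fgl} \cong \bV \oplus \bV[1]$ and $\bW_*|_{\fgl} \cong \bV_* \oplus \bV_*[1]$ matches the paper exactly and is the real content of the proposition; the paper's published proof says essentially nothing more.

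However, the reduction from tensor generators to arbitrary finitely generated objects is both harder than you make it out to be and harder than it needs to be. You aim to realize every finitely generated $M$ as a \emph{quotient} of a finite direct sum of tensor generators, and claim this follows from (i) simples being quotients of tensor generators and (ii) finitely generated objects having finite length. Those two facts do not actually yield the surjection: assembling it from a composition series would require lifting maps through extensions, which in turn would require the tensor generators to be projective, and in these categories they are not. In any case the surjection is overkill. By the construction of the admissible subcategory generated by a set, every object of $\Rep^{\alg}(\fq)$ is a \emph{subquotient} of a direct sum of tensor generators; if $M = K/N$ is finitely generated with $K \subset \bigoplus_{i\in I} T_i$, then by AB5 the subobjects $K \cap \bigoplus_{i\in J} T_i$ (over finite $J$) exhaust $K$, so some such subobject surjects onto $M$, making $M$ a subquotient of the \emph{finite} sum $\bigoplus_{i\in J} T_i$. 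Since that finite sum restricts to a finite-length object of $\Rep^{\alg}(\fgl)$, and subquotients of finite-length objects are again finite length (hence finitely generated), we get \prop{Fin}. This is what the paper's ``it suffices to check the generators'' is short for, and it needs no input from \cite{grantcharov} at all.
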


\begin{proof}
It suffices to check that the generators of $\Rep^{\alg}(\fq)$ are finitely generated as $\fgl$-representations. The representation $\bW$ of $\fq$ restricts to the representation $\bV \oplus \bV[1]$, while $\bW_*$ restricts to $\bV_* \oplus \bV_*[1]$. Thus the result follows.
\end{proof}

\begin{theorem} \label{thm: isomericABUF}
The category $\Rep^{{\rm alg}}(\fq)$ satisfies \prop{A}, \prop{B} and \prop{UF}.
\end{theorem}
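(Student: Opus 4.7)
The plan is to recognize this as a direct application of Proposition~\ref{prop:good-sub}, taking $\cC=\Rep^{\alg}(\fq)$ and $\cD=\Rep^{\alg}(\fgl)$, with the fiber functor being the restriction along the embedding $\fgl \hookrightarrow \fq$ described just above the theorem. The machinery from Section~\ref{s:lie} was built precisely to reduce such statements to properties already established for a smaller algebra.

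First I would verify that $\Rep^{\alg}(\fq)$ and $\Rep^{\alg}(\fgl)$ are compatible admissible subcategories of $\Rep(\fq)$ and $\Rep(\fgl)$ respectively. Admissibility on each side is built into the construction of these subcategories (each is generated by a finite collection of finite length objects). Compatibility amounts to checking that the restriction of any generator of $\Rep^{\alg}(\fq)$ lies in $\Rep^{\alg}(\fgl)$; this is already implicit in the proof of Proposition~\ref{prop: isomericRes}, which computes $\bW\vert_{\fgl}=\bV\oplus\bV[1]$ and $\bW_*\vert_{\fgl}=\bV_*\oplus\bV_*[1]$.

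Second, I would assemble the three hypotheses of Proposition~\ref{prop:good-sub}: (i) $\Rep^{\alg}(\fgl)$ satisfies \prop{UF}, which is Proposition~\ref{prop: glUF}; (ii) $\Rep^{\alg}(\fgl)$ satisfies \prop{A}, which is Corollary~\ref{cor: glAB}; and (iii) the restriction functor $\Rep^{\alg}(\fq)\to\Rep^{\alg}(\fgl)$ satisfies \prop{Fin}, which is Proposition~\ref{prop: isomericRes}. Applying Proposition~\ref{prop:good-sub} immediately yields \prop{A}, \prop{B}, and \prop{UF} for $\Rep^{\alg}(\fq)$.

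There is essentially no obstacle; the point of developing the abstract framework in Sections~\ref{s:axioms} and~\ref{s:lie} was to make transfers of this form routine. The real work was done earlier: establishing \prop{UF} for $\Rep^{\alg}(\fgl)$ via an explicit analysis of simple objects (Proposition~\ref{prop: glUF}), establishing \prop{Stab} and \prop{Gen} for the twisted diagonal $\fgl \hookrightarrow \fgl\times\fgl$ to get \prop{A} on $\Rep^{\alg}(\fgl)$, and checking in Proposition~\ref{prop: isomericRes} that the four $\fq$-generators restrict to finitely generated $\fgl$-representations. Once those are in hand, the present theorem is a one-line application of Proposition~\ref{prop:good-sub}.
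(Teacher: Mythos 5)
Your proposal matches the paper's proof exactly: both invoke Proposition~\ref{prop:good-sub} for the restriction functor $\Rep^{\alg}(\fq)\to\Rep^{\alg}(\fgl)$, with the hypotheses supplied by Proposition~\ref{prop: isomericRes} (for \prop{Fin}), Proposition~\ref{prop: glUF} (for \prop{UF}), and Corollary~\ref{cor: glAB} (for \prop{A}). The extra remarks you make about verifying admissibility and compatibility are correct and just spell out what the paper leaves implicit.
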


\begin{proof}
This follows from applying Proposition \ref{prop:good-sub} to the restriction functor $\Rep^{{\rm alg}}(\fq) \to \Rep^{{\rm alg}}(\gl)$. The necessary conditions are satisfied via Propositions \ref{prop: isomericRes}, \ref{prop: glUF} and Corollary \ref{cor: glAB}.
\end{proof}

\begin{proposition} \label{prop: nisomericABUF}
The category $\Rep^{\rm alg}(\fq^n)$ satisfies \prop{A}, \prop{B}, \prop{UF}.
\end{proposition}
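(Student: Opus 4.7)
The plan is to imitate the proof of the analogous statement for $\Rep^{\pol}(\fgl^n)$ given just after Corollary~\ref{cor: glAB}. View $\fq$ as the diagonal Lie subalgebra of $\fq^n$ and consider the restriction functor
\[
\omega \colon \Rep^{\rm alg}(\fq^n) \to \Rep^{\rm alg}(\fq).
\]
By Theorem~\ref{thm: isomericABUF}, the target $\Rep^{\rm alg}(\fq)$ already satisfies \prop{A} and \prop{UF}. By Proposition~\ref{prop:good-sub}, it therefore suffices to verify that $\omega$ satisfies \prop{Fin}, after which all three properties \prop{A}, \prop{B}, and \prop{UF} for $\Rep^{\rm alg}(\fq^n)$ follow automatically.

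For \prop{Fin}, I would observe that the tensor generators of $\Rep^{\rm alg}(\fq^n)$ are the representations $\bW_i$ and $\bW_{i,*}$ pulled back from the $i$-th factor, and that these restrict along the diagonal to $\bW$ and $\bW_*$, both of which are finitely generated in $\Rep^{\rm alg}(\fq)$. By admissibility of $\Rep^{\rm alg}(\fq)$, any finite tensor product of these generators therefore restricts to a finitely generated object. To promote this to arbitrary finitely generated objects of $\Rep^{\rm alg}(\fq^n)$, one shows that every such object is a subquotient of a finite direct sum of finite tensor products of generators; exactness of $\omega$ then yields that $\omega(M)$ is finitely generated whenever $M$ is.

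The main obstacle lies in this last reduction. In the polynomial $\fgl^n$ case, semisimplicity of $\Rep^{\pol}(\fgl)$ makes the reduction immediate via the equivalence ``finite length equals finitely generated,'' combined with the fact that the tensor product of two finite length polynomial representations of $\fgl$ is finite length. For the non-semisimple algebraic category $\Rep^{\rm alg}(\fq^n)$, the argument is less immediate: one must analyze the simple objects as (shifts of) external tensor products of simples in $\Rep^{\rm alg}(\fq)$, in parallel with the structural analysis of simples carried out in Proposition~\ref{prop: glUF}, and then combine this with an extension argument in the spirit of Proposition~\ref{prop:simpleUF} to handle general finitely generated objects. Once \prop{Fin} is in hand, a single application of Proposition~\ref{prop:good-sub} completes the proof.
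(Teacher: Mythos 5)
Your proposal matches the paper's proof: regard $\fq$ as the diagonal subalgebra of $\fq^n$, verify \prop{Fin} for the restriction functor, and conclude via Theorem~\ref{thm: isomericABUF} and Proposition~\ref{prop:good-sub}. The paper dispatches the \prop{Fin} verification more tersely than you do — it simply cites the fact that the tensor product of two finitely generated $\fq$-representations is again finitely generated (which is part of the admissibility of $\Rep^{\alg}(\fq)$), whereas the longer detour through simples and extensions that you propose, while workable, is more than is needed.
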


\begin{proof}
Regard $\fq$ as the diagonal subalgebra of $\fq^n$. Then the restriction functor $\Rep^{\rm alg}(\fq^n) \to \Rep^{\rm alg}(\fq)$ satisfies \prop{Fin}; this follows from the fact that the tensor product of two finitely generated representations of $\fq$ is again finitely generated. Thus, the result follows from Theorem \ref{thm: isomericABUF} in combination with Proposition \ref{prop:good-sub}.
\end{proof}

\begin{proposition} \label{prop:qspec-noeth}
Let $A$ be a commutative algebra in $\Rep^{\alg}(\fq)$. Suppose $A$ is generated over a noetherian coefficient ring by a finite length subrepresentation. Then $\Spec_{\fq}(A)$ is a noetherian topological space.
\end{proposition}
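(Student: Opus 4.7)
The plan is to follow the template established by Proposition~\ref{prop:spec-noeth} for $\fgl$, replacing the key inputs with their isomeric analogues. Let $\fg$ be the diagonal copy of $\fgl_\infty$ sitting inside the even part of $\fq_\infty$; this is a non-super (ordinary) classical Lie algebra acting on $A$ by even derivations. Set $B = A/\rad(A)$, where $\rad(A)$ is the ordinary radical. Because $\fg$ acts by even derivations of the ordinary commutative ring $A$, the ideal $\rad(A)$ is $\fg$-stable (even though it need not be $\fq$-stable), so $B$ inherits a natural structure of $\fg$-algebra and in fact belongs to $\Rep^{\alg}(\fg)$.

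Since $A$ is generated over a noetherian coefficient ring by a finite length $\fq$-subrepresentation, and Proposition~\ref{prop: isomericRes} (extended to the further restriction from $\fgl$ to its diagonal $\fg$) shows that finite length $\fq$-representations restrict to finitely generated $\fg$-representations, $B$ is equivariantly finitely generated as a $\fg$-algebra. I would then invoke the variant of Draisma's theorem for algebraic representations proved in \cite{ES} (the very result used in Proposition~\ref{prop:spec-noeth}): radical $\fg$-ideals of $B$ satisfy the ascending chain condition.

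Now suppose $\fa_1 \subset \fa_2 \subset \cdots$ is an ascending chain of $\fq$-radical ideals of $A$. The ordinary radicals $\rad(\fa_i)$ are $\fg$-stable (again because $\fg$ sits in the even part) and form an ascending chain of radical $\fg$-ideals of $A$, which correspond to radical $\fg$-ideals of $B$; hence this chain stabilizes by the preceding paragraph. By property \prop{A} for $\Rep^{\alg}(\fq)$, established in Theorem~\ref{thm: isomericABUF}, the containment $\rad_\fq(\fa_i) \subset \rad_\fq(\fa_{i+1})$ is equivalent to $\rad(\fa_i) \subset \rad(\fa_{i+1})$; consequently the original chain of $\fq$-radical ideals stabilizes, and $\Spec_\fq(A)$ is noetherian.

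The main obstacle is verifying that the ES variant of Draisma's theorem genuinely applies to the specific diagonal $\fgl_\infty \subset \fq_\infty$ and that $B$, which a priori carries only a partial action, lies in the category to which that theorem applies. Once this structural check is in place, the remainder is a direct translation of the $\fgl$ argument using Theorem~\ref{thm: isomericABUF} in place of Corollary~\ref{cor: glAB}.
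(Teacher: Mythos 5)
Your proof is correct and proves the statement, but it takes a slightly different route than the paper. You essentially unfold the proof of Proposition~\ref{prop:spec-noeth} inline: you descend directly to the diagonal (non-super) $\fgl_\infty$ sitting inside the even part of $\fq$, pass to $B = A/\rad(A)$, and invoke the Eggermont--Snowden noetherianity theorem for algebraic representations of $\fgl_\infty$. The paper instead treats Proposition~\ref{prop:spec-noeth} as a black box: since the restriction functor $\Rep^{\alg}(\fq) \to \Rep^{\alg}(\fgl_{\infty|\infty})$ satisfies \prop{Fin} (Proposition~\ref{prop: isomericRes}), $A$ is finitely generated as a $\fgl_{\infty|\infty}$-algebra, so the chain $\rad_{\fgl}(\fa_\bullet)$ stabilizes by Proposition~\ref{prop:spec-noeth}; hence $\rad(\fa_\bullet) = \rad(\rad_{\fgl}(\fa_\bullet))$ stabilizes, and \prop{A} for $\Rep^{\alg}(\fq)$ finishes it. Both arguments bottom out in the same [ES] noetherianity input, but the paper's reduction is more modular: it sidesteps exactly the structural verification you flag as ``the main obstacle'' (that $B$ lies in $\Rep^{\alg}$ of the correct diagonal $\fgl_\infty$ and that [ES] applies), because all of that is already absorbed into Proposition~\ref{prop:spec-noeth}. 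Your approach does buy nothing the paper's does not, and requires you to re-confirm, for $\fq$, that the embedding of the diagonal $\fgl_\infty$ is parity-preserving and that restriction to it sends finite length $\fq$-representations to finitely generated $\fgl_\infty$-representations; this is routine but not for free. If you want to remain blind to the paper's route yet streamline your own, the cleanest move is to observe that you need only the weaker fact that the chain of \emph{ordinary} radicals $\rad(\fa_\bullet)$ stabilizes, and that this is precisely what Proposition~\ref{prop:spec-noeth} (applied to $A$ viewed as a $\fgl_{\infty|\infty}$-algebra via Proposition~\ref{prop: isomericRes}) already gives, combined with \prop{A} for $\Rep^{\alg}(\fgl_{\infty|\infty})$.
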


\begin{proof}
Let $\fa_1 \subset \fa_2 \subset \cdots$ be an ascending chain of $\fq$-radical ideals of $A$. Since $A$ is finitely generated as a $\fgl$-algebra, it follows from Proposition~\ref{prop:spec-noeth} that the chain $\rad_{\fgl}(\fa_{\bullet})$ stabilizes. Thus the chain $\rad(\fa_{\bullet})=\rad(\rad_{\gl}(\fa_{\bullet}))$ stabilizes, and so, by \prop{A}, the chain $\fa_{\bullet}$ stabilizes. (Note: we are essentially just applying \prop{A} for the functor $\Rep^{\alg}(\fq) \to \Rep^{\alg}(\fgl)$ here.) The result follows.
\end{proof}

\begin{remark}
The more obvious restriction functor $\Rep^{\alg}(\fq) \to \Rep^{\alg}(\gl_\infty)$, where $\fgl_{\infty}$ is the even subalgebra of $\fq$, is not sufficient for our purposes because $\Rep^{\alg}(\gl_\infty)$ does not satisfy \prop{A}.
\end{remark}

\subsection{The orthosymplectic algebra}

Let $\bW=\bV \oplus \bV_*$. This space carries a canonical even symmetric bilinear form. The \defn{orthosymplectic Lie superalgebra} $\osp$ is the stabilizer of this form inside of $\fgl(\bW)$. We define the category $\Rep^{\alg}(\osp)$ of \defn{(wide) algebraic representations} of $\osp$ to be the subcategory of $\Rep(\osp)$ generated by $\bW$. As in the $\fgl$ case, there is also a narrow category, which is equivalent to the category of algebraic representations of the infinite orthogonal category; this category was studied in \cite{koszulcategory, penkovserganova, penkovstyrkas,infrank}.

Any element of $\fgl=\fgl(\bV)$ acts on $\bW=\bV \oplus \bV_*$ and preserves the form. This induces an embedding $\fgl \to \osp$ of Lie superalgebras. In terms of matrices, this embedding is given (in a suitable basis) by
\begin{displaymath}
\begin{pmatrix}
A &B\\
C &D
\end{pmatrix}
\mapsto
\begin{pmatrix}
A & 0 & B & 0\\
0 & -A^t & 0 & -C^t\\
C & 0 & D & 0\\
0 & B^t & 0 &-D^t\\
\end{pmatrix},
\end{displaymath}
As before, we have:

\begin{proposition} \label{prop: ospRes}
The restriction functor $\Rep^{\alg}(\osp) \to \Rep^{\alg}(\gl)$ satisfies (Fin).
\end{proposition}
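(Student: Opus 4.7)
The plan is to mirror the proof of Proposition~\ref{prop: isomericRes}. The key is that $\Rep^{\alg}(\osp)$ is generated as a tensor category by the single object $\bW$, so every finitely generated object is a subquotient of a finite direct sum of tensor powers of $\bW$. Since restriction is exact and symmetric monoidal, and since the admissibility of $\Rep^{\alg}(\fgl)$ guarantees that tensor products and quotients of finitely generated objects remain finitely generated, it suffices to verify that $\bW$ restricts to a finitely generated object of $\Rep^{\alg}(\fgl)$.

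For this, I would read off the $\fgl$-action on $\bW = \bV \oplus \bV_*$ from the explicit block-matrix embedding $\fgl \hookrightarrow \osp$ given just above the proposition. In the displayed form, the upper-left $2\times 2$ block $\left(\begin{smallmatrix}A&B\\C&D\end{smallmatrix}\right)$ shows that the embedded $\fgl$ preserves the subspace $\bV$ and acts on it by its standard action, while the lower-right $2\times 2$ block $\left(\begin{smallmatrix}D&0\\0&-D^t\end{smallmatrix}\right)$-style entries show that $\bV_*$ is also $\fgl$-stable and carries the contragredient action. Thus as a $\fgl$-representation we have the canonical decomposition $\bW|_{\fgl} \cong \bV \oplus \bV_*$.

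Both $\bV$ and $\bV_*$ are among the defining generators of $\Rep^{\alg}(\fgl)$ and are in particular finitely generated (indeed simple) objects there, so $\bV \oplus \bV_*$ is finitely generated. Combining this with the opening reduction, the restriction of any finitely generated $\osp$-representation is finitely generated as a $\fgl$-representation, which is exactly \prop{Fin}.

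There is no real obstacle here: the whole content is the straightforward matrix computation identifying $\bW|_{\fgl}$. The only small care needed is the reduction step, where one must invoke admissibility of $\Rep^{\alg}(\fgl)$ (tensor products of finitely generated objects remain finitely generated, and subquotients inherit finite generation) to pass from the single generator $\bW$ to arbitrary finitely generated objects of $\Rep^{\alg}(\osp)$.
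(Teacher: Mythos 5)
Your proof is correct and matches the paper's argument: the paper also reduces to the single generator $\bW$ and observes that it restricts to $\bV \oplus \bV_*$, a finite-length object of $\Rep^{\alg}(\fgl)$. You simply spell out the admissibility-based reduction step in slightly more detail than the paper does.
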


\begin{proof}
Since the generator $\bW$ of $\Rep^{\alg}(\osp)$ restricts to $\bV \oplus \bV_*$, which is a finite length representation, the result follows.
\end{proof}

\begin{theorem}
The category $\Rep^{{\rm alg}}(\osp)$ satisfies (A), (B) and (UF).
\end{theorem}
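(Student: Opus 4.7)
The plan is to reduce to the corresponding statement for $\Rep^{\alg}(\fgl)$, following verbatim the strategy used for the isomeric algebra in Theorem~\ref{thm: isomericABUF}. Specifically, I would invoke Proposition~\ref{prop:good-sub} applied to the restriction functor
\begin{displaymath}
\Rep^{\alg}(\osp) \to \Rep^{\alg}(\fgl)
\end{displaymath}
induced by the embedding $\fgl \hookrightarrow \osp$ displayed in the matrix formula above. Proposition~\ref{prop:good-sub} states precisely that if the target category satisfies \prop{A} and \prop{UF} and the functor satisfies \prop{Fin}, then the source category satisfies \prop{A}, \prop{B}, and \prop{UF}.

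All three hypotheses are already in hand. Property \prop{Fin} for the restriction functor is Proposition~\ref{prop: ospRes}, which was just established using the fact that the single generator $\bW$ restricts to the finite length $\fgl$-module $\bV \oplus \bV_*$. Property \prop{UF} for $\Rep^{\alg}(\fgl)$ is Proposition~\ref{prop: glUF}, and property \prop{A} for $\Rep^{\alg}(\fgl)$ is Corollary~\ref{cor: glAB}. Feeding these into Proposition~\ref{prop:good-sub} yields the theorem at once.

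There is no genuine obstacle in this step itself; the whole proof is a mechanical concatenation of already established results, mirroring the isomeric case word for word. The real substance lies upstream: all of the difficulty was absorbed into verifying \prop{A}, \prop{B}, and \prop{UF} for $\Rep^{\alg}(\fgl)$ (which relied in turn on \prop{Stab}, \prop{Gen}, and the results from \cite{tcaprimes} and \cite{sym2noeth}), and into identifying a Lie subalgebra $\fgl \subset \osp$ such that the restriction from $\Rep^{\alg}(\osp)$ lands in $\Rep^{\alg}(\fgl)$ with the finiteness property \prop{Fin}. Once those ingredients are recorded, the proof is a one-line invocation of Proposition~\ref{prop:good-sub}.
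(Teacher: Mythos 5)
Your proposal is correct and is exactly the proof given in the paper: apply Proposition~\ref{prop:good-sub} to the restriction functor $\Rep^{\alg}(\osp) \to \Rep^{\alg}(\fgl)$, with the hypotheses supplied by Proposition~\ref{prop: ospRes} (for \prop{Fin}), Proposition~\ref{prop: glUF} (for \prop{UF}), and Corollary~\ref{cor: glAB} (for \prop{A}). You also correctly identify where the real work lies upstream in the $\fgl$ case.
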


\begin{proof}
This follows from applying Proposition \ref{prop:good-sub} to the restriction functor $\Rep^{{\rm alg}}(\osp) \to \Rep^{{\rm alg}}(\gl)$. The necessary conditions are satisfied via Propositions \ref{prop: ospRes}, \ref{prop: glUF} and Corollary \ref{cor: glAB}.
\end{proof}

\begin{proposition}
Let $A$ be a commutative algebra in $\Rep^{\alg}(\osp)$. Suppose $A$ is generated over a noetherian coefficient ring by a finite length subrepresentation. Then $\Spec_{\osp}(A)$ is a noetherian topological space.
\end{proposition}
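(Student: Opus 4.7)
The plan is to follow the proof of Proposition~\ref{prop:qspec-noeth} essentially verbatim, using the embedding $\fgl \hookrightarrow \osp$ in place of the embedding $\fgl \hookrightarrow \fq$. Three ingredients are required: \prop{Fin} for the restriction functor $\Rep^{\alg}(\osp) \to \Rep^{\alg}(\fgl)$ (supplied by Proposition~\ref{prop: ospRes}); noetherianity of $\fgl$-spectra for algebras generated over a noetherian coefficient ring by a finite length subrepresentation (Proposition~\ref{prop:spec-noeth}); and property \prop{A} for $\Rep^{\alg}(\osp)$, just established in the previous theorem.

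First, I would view $A$ as a commutative algebra in $\Rep^{\alg}(\fgl)$ via restriction along $\fgl \hookrightarrow \osp$. By Proposition~\ref{prop: ospRes}, the finite length $\osp$-subrepresentation generating $A$ restricts to a finite length $\fgl$-subrepresentation, so the hypotheses of Proposition~\ref{prop:spec-noeth} are met for $A$ viewed over $\fgl$. Given an ascending chain $\fa_1 \subset \fa_2 \subset \cdots$ of $\osp$-radical ideals of $A$, each $\fa_i$ is in particular $\fgl$-stable, so $\rad_{\fgl}(\fa_i)$ is defined, and Proposition~\ref{prop:spec-noeth} ensures that the chain $\rad_{\fgl}(\fa_\bullet)$ stabilizes. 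Since $\rad_{\fgl}(\fa_i) \subset \rad(\fa_i)$ and $\rad$ is idempotent, we have the equality of ordinary radicals $\rad(\fa_i) = \rad(\rad_{\fgl}(\fa_i))$, so the chain of ordinary radicals also stabilizes.

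Finally, applying \prop{A} for $\Rep^{\alg}(\osp)$, the chain $\rad_{\osp}(\fa_\bullet)$ stabilizes as well; since each $\fa_i$ is $\osp$-radical, $\fa_i = \rad_{\osp}(\fa_i)$, and so the original chain $\fa_\bullet$ stabilizes. This is the ascending chain condition on closed subsets of $\Spec_{\osp}(A)$, hence noetherianity.

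The main obstacle is essentially nonexistent: this result is a direct parallel of Proposition~\ref{prop:qspec-noeth}, with the $\fq$ case providing the template and the only substantive input being Proposition~\ref{prop: ospRes}, which guarantees that restriction via $\fgl \hookrightarrow \osp$ preserves the finite length generation hypothesis. Everything else is bookkeeping between three different flavors of radical.
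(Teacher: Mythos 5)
Your proof is correct and follows exactly the route the paper indicates: the paper's own proof is a one-line reference back to Proposition~\ref{prop:qspec-noeth}, and you have faithfully unwound that argument, substituting Proposition~\ref{prop: ospRes} for Proposition~\ref{prop: isomericRes} as the ingredient that guarantees finite length generation over $\fgl$. The chain of radicals bookkeeping ($\rad(\fa_i) = \rad(\rad_{\fgl}(\fa_i))$, stabilization via Proposition~\ref{prop:spec-noeth}, then \prop{A} for $\Rep^{\alg}(\osp)$) matches the paper's template precisely.
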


\begin{proof}
The proof is identical to that of Proposition~\ref{prop:qspec-noeth}
\end{proof}

\subsection{The periplectic algebra}

Let $\bW=\bV \oplus \bV_*[1]$. This space carries a canonical odd symmetric bilinear form. The \defn{periplectic Lie superalgebra} $\pe$ is the stabilizer of this form inside of $\fgl(\bW)$. We define the category $\Rep^{\alg}(\pe)$ of \defn{algebraic representations} of $\pe$ to be the subcategory of $\Rep(\pe)$ generated by $\bW$. This category was studied in \cite{serganova}.

Every element of $\fgl=\fgl(\bV)$ induces an map of $\bW$ that is compatible with the pairing, and so there is an embedding $\fgl \to \pe$. In terms of matrices, it is given by
\begin{displaymath}
\begin{pmatrix}
A &B\\
C &D
\end{pmatrix}
\mapsto
\begin{pmatrix}
A & 0 & 0 & B\\
0 & -D^t & B^t & 0\\
0 & -C^t & -A^t & 0\\
C & 0 & 0 & D
\end{pmatrix}.
\end{displaymath}
As in the other cases, we have:

\begin{proposition} \label{prop: peRes}
The restriction function $\Rep^{\alg}(\pe) \to \Rep^{\alg}(\gl)$ satisfies (Fin).
\end{proposition}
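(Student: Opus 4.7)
The plan is to follow exactly the pattern established by Propositions \ref{prop: isomericRes} and \ref{prop: ospRes}: reduce the claim to checking finite generation on the generator of $\Rep^{\alg}(\pe)$, and then read off the restriction from the explicit matrix embedding $\fgl \to \pe$ just displayed.

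First, I would observe that it suffices to verify \prop{Fin} on the single generator $\bW = \bV \oplus \bV_*[1]$ of $\Rep^{\alg}(\pe)$. Indeed, by the admissibility of $\Rep^{\alg}(\fgl)$ (established implicitly in the propositions of \S 5.3), tensor products and finite direct sums of finitely generated $\fgl$-objects are finitely generated, and any subquotient of a finitely generated object in $\Rep^{\alg}(\fgl)$ is again finitely generated. Hence if the restriction of $\bW$ lies in the class of finitely generated objects of $\Rep^{\alg}(\fgl)$, the same holds for all objects in the tensor subcategory of $\Rep(\pe)$ generated by $\bW$, i.e., for all of $\Rep^{\alg}(\pe)$.

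Next, I would read off the restriction directly from the matrix formula for $\fgl \hookrightarrow \pe$. In the chosen basis, $\bW$ decomposes as a $\fgl$-subrepresentation into the $\bV$-block (on which $\fgl$ acts through its standard action) and the $\bV_*[1]$-block (on which $\fgl$ acts by $X \mapsto -X^t$, after the parity shift), so that as a $\fgl$-module
\begin{displaymath}
\bW\big|_{\fgl} \;\cong\; \bV \,\oplus\, \bV_*[1].
\end{displaymath}
This is a length-two, hence finite length, object of $\Rep^{\alg}(\fgl)$; in particular it is finitely generated.

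Combining these two observations completes the proof. There is no real obstacle here: the only thing to check is that the block decomposition read off from the displayed matrix genuinely identifies $\bW|_{\fgl}$ with $\bV \oplus \bV_*[1]$ (rather than, say, with a nontrivial extension), and that is immediate since $\fgl$ acts block-diagonally in the chosen basis. The argument is therefore essentially one line once the reduction to the generator is made, mirroring the proofs for $\fq$ and $\osp$.
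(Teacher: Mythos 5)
Your proof is correct and follows essentially the same approach as the paper: identify the restriction of the generator $\bW$ with the finite length $\fgl$-representation $\bV \oplus \bV_*[1]$ and conclude. One small slip in phrasing: you write that the claim then ``holds for all objects in the tensor subcategory ... i.e., for all of $\Rep^{\alg}(\pe)$,'' but of course only the \emph{finitely generated} objects of $\Rep^{\alg}(\pe)$ restrict to finitely generated objects (an infinite direct sum does not); this is clearly what you meant, and the reduction you sketch (subquotients, finite sums, and finite tensor products of finitely generated objects in $\Rep^{\alg}(\fgl)$ are finitely generated, since they have finite length) is sound.
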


\begin{proof}
The generator $\bW$ of $\Rep^{\alg}(\pe)$ restricts to the finite length representation $\bV \oplus \bV_*[1]$, and so the result follows.
\end{proof}

\begin{theorem}
The category $\Rep^{{\rm alg}}(\pe)$ satisfies (A), (B) and (UF).
\end{theorem}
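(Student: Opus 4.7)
The plan is to follow exactly the template used for the orthosymplectic and isomeric cases: deduce the result as an instance of Proposition~\ref{prop:good-sub} applied to the restriction functor $\Rep^{\alg}(\pe) \to \Rep^{\alg}(\fgl)$ coming from the embedding $\fgl \hookrightarrow \pe$ described immediately before the statement. This is a ``plug-and-play'' argument: all three inputs required by Proposition~\ref{prop:good-sub} are already in hand.

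More precisely, I would verify the three hypotheses of Proposition~\ref{prop:good-sub} in the following order. First, the restriction functor $\cC = \Rep^{\alg}(\pe) \to \cD = \Rep^{\alg}(\fgl)$ satisfies \prop{Fin}: this is exactly Proposition~\ref{prop: peRes}, which was just proved using the fact that the generator $\bW=\bV\oplus \bV_*[1]$ of $\Rep^{\alg}(\pe)$ restricts to a finite length $\fgl$-representation. Second, $\cD = \Rep^{\alg}(\fgl)$ satisfies \prop{UF}, which is Proposition~\ref{prop: glUF}. Third, $\cD = \Rep^{\alg}(\fgl)$ satisfies \prop{A}, which is part of Corollary~\ref{cor: glAB}. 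Proposition~\ref{prop:good-sub} then yields \prop{A}, \prop{B}, and \prop{UF} for $\Rep^{\alg}(\pe)$ simultaneously.

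There is essentially no obstacle here: the hard work was done in setting up the abstract machinery (\S\ref{s:axioms}--\S\ref{s:lie}) and in handling the $\fgl$ case, where \prop{Stab}, \prop{Gen}, and the explicit good-generator analysis for $\bV^{\otimes n}\otimes\bV_*^{\otimes m}$ were carried out. The only thing one must do is observe that the embedding $\fgl\to \pe$ given by the displayed matrix formula is an injective Lie superalgebra homomorphism and that $\Rep^{\alg}(\pe)$ and $\Rep^{\alg}(\fgl)$ are compatible admissible subcategories in the sense of the paper (so that restriction is actually a fiber functor to which the abstract results apply); the former is immediate from the matrix formula, and the latter follows from Proposition~\ref{prop: peRes} together with the standard closure properties of the generated tensor subcategories. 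Once these routine checks are in place, invoking Proposition~\ref{prop:good-sub} closes the proof in one line, as in the \osp\ case.
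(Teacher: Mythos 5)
Your proof is correct and is exactly the argument given in the paper: apply Proposition~\ref{prop:good-sub} to the restriction functor $\Rep^{\alg}(\pe)\to\Rep^{\alg}(\fgl)$, with the hypotheses supplied by Proposition~\ref{prop: peRes}, Proposition~\ref{prop: glUF}, and Corollary~\ref{cor: glAB}. Nothing to add.
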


\begin{proof}
This follows from applying Proposition \ref{prop:good-sub} to the restriction functor $\Rep^{{\rm alg}}(\pe) \to \Rep^{{\rm alg}}(\gl)$. The necessary conditions are satisfied via Propositions \ref{prop: peRes}, \ref{prop: glUF} and Corollary \ref{cor: glAB}.
\end{proof}

\begin{proposition}
Let $A$ be a commutative algebra in $\Rep^{\alg}(\pe)$. Suppose $A$ is generated over a noetherian coefficient ring by a finite length subrepresentation. Then $\Spec_{\pe}(A)$ is a noetherian topological space.
\end{proposition}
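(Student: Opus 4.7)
The plan is to mirror the argument given for Proposition~\ref{prop:qspec-noeth}, simply replacing $\fq$ by $\pe$ throughout. The two facts that drove that proof are already in place for $\pe$: Proposition~\ref{prop: peRes} gives \prop{Fin} for the restriction functor $\Rep^{\alg}(\pe) \to \Rep^{\alg}(\fgl)$, and the theorem just established supplies \prop{A} (and \prop{B}) for $\Rep^{\alg}(\pe)$.

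First I would transfer the generation hypothesis from $\pe$ to $\fgl$. Because restriction satisfies \prop{Fin}, any finite length $\pe$-subrepresentation of $A$ generating $A$ over the noetherian coefficient ring restricts to a finite length $\fgl$-subrepresentation that still generates. Thus $A$ meets the hypothesis of Proposition~\ref{prop:spec-noeth}, and $\Spec_{\fgl}(A)$ is itself a noetherian topological space.

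Next, I would take an arbitrary ascending chain $\fa_1 \subset \fa_2 \subset \cdots$ of $\pe$-radical ideals of $A$, observe that each $\fa_i$ is in particular $\fgl$-stable, and pass to the chain of $\fgl$-radicals $\rad_{\fgl}(\fa_i)$. Noetherianity of $\Spec_{\fgl}(A)$ forces this chain to stabilize. The containments $\fa_i \subset \rad_{\fgl}(\fa_i) \subset \rad(\fa_i)$ yield $\rad(\fa_i) = \rad(\rad_{\fgl}(\fa_i))$, so the chain of ordinary radicals stabilizes as well. Finally I would apply \prop{A} for the forgetful functor $\Rep^{\alg}(\pe) \to \SVec$ to convert stabilization of ordinary radicals into stabilization of $\pe$-radicals; since the $\fa_i$ are $\pe$-radical to begin with, this forces the original chain to stabilize.

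There is no real obstacle at this stage: all substantive work was done earlier in proving \prop{A}, \prop{B}, \prop{Fin}, and the $\fgl$ version of this statement. The periplectic case is purely formal and essentially verbatim with the $\fq$ and $\osp$ arguments; no new difficulty arises from the odd nature of the periplectic pairing.
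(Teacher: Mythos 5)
Your argument is correct and follows the paper's own proof essentially verbatim: the paper simply declares this proof identical to that of the $\fq$ case (Proposition~\ref{prop:qspec-noeth}), which is exactly the chain-of-$\fgl$-radicals argument you spell out, together with the transfer of finite generation via \prop{Fin} and the final descent via \prop{A}.
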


\begin{proof}
The proof is identical to that of Proposition~\ref{prop:qspec-noeth}
\end{proof}

\subsection{Additional comments}

We deduced \prop{A} and \prop{B} for $\Rep^{\alg}(\fgl)$ from corresponding properties for $\Rep^{\pol}(\fgl)$ via the criterion in \S \ref{ss:propA}. We then deduced the properties for $\fq$, $\osp$, and $\pe$ from the $\Rep^{\alg}(\fgl)$ case. Instead, one can also establish the properties for $\fq$, $\osp$, and $\pe$ in a parallel fashion to the $\Rep^{\alg}(\fgl)$ case. (In the $\fq$ case, this requires first establishing the properties for $\Rep^{\pol}(\fq)$; this follows easily from properties of the restriction functor $\Rep^{\pol}(\fq) \to \Rep^{\pol}(\fgl)$.) The key input required to carry this out is summarized in Figure~\ref{fig:cases}.

\begin{figure}[!h]
\begin{center}
\begin{tabular}{rrrrrrr}
\thickhline\\[-11pt]
&\;& $\fg$ &\;& $\fh$ &\;& $E$ \\[2pt]
\hline\\[-10pt]
I. && $\fgl \times \fgl$ && $\fgl$ && $\bV \otimes \bV$ \\[2pt]
II. && $\fgl$ && $\osp$ && $\Sym^2(\bV)$ \\[2pt]
III. && $\fgl$ && $\pe$ && $\Sym^2(\bV)[1]$ \\[2pt]
IV. && $\fq \times \fq$ && $\fq$ && $2^{-1}(\bV \otimes \bV)$ \\[3pt]
\thickhline
\end{tabular}
\end{center}
\label{fig:cases}
\caption{In each case, there is a restriction functor $\Rep^{\pol}(\fg) \to \Rep^{\alg}(\fh)$ that satisfies \prop{Gen}. The algebra $R$ in $\Rep^{\pol}(\fg)$ is $\Sym(E)$. Cases~I and~II are established in \cite{sym2noeth}; case~III in \cite{periplectic}; and case~IV in \cite{isomeric}.}
\end{figure}

\section{An example} \label{s:isomeric}

\subsection{Background}

In this section, we apply our theory to classify the equivariant primes in the isomeric algebra $A$ studied in \cite{isomeric}. We begin by briefly recalling some background material; we refer to \cite[\S 2]{isomeric} for a more detailed discussion and to \cite[\S 3]{chengwang} for general background on the isomeric algebra and its representations.

Recall that a \defn{isomeric vector space} is a pair $(V, \alpha)$ where $V$ is a super vector space and $\alpha$ is an odd degree automorphism of $V$ squaring to the identity (an isomeric structure). Given such a space, the \defn{isomeric Lie superalgebra} $\fq(V)$ is the subalgebra of $\fgl(V)$ consisting of endomorphisms $X$ that are compatible with $\alpha$ (i.e., $X\alpha=(-1)^{\vert X \vert} \alpha X$ for $X$ homogeneous). We say that a representation of $\fq(V)$ is \defn{polynomial} if it occurs as a subquotient in a direct sum of tensor powers of $V$. We let $\Rep^{\pol}(\fq(V))$ be the category of polynomial representations. It is a semisimple abelian category (even if $\dim(V)=\infty$).

The simple polynomial representations can be constructed uniformly, as follows. Consider the tensor power $V^{\otimes n}$. The symmetric group $\fS_n$ acts by permuting the tensor factors. Furthermore, for each $1 \le i \le n$, we can consider the endomorphism $\alpha_i$ induced by $\alpha$ on the $i$th tensor factor; note that for $i \ne j$, the endomorphisms $\alpha_i$ and $\alpha_j$ supercommute. The \defn{Clifford algebra} $\Cl_n$ is the superalgebra generated by $n$ supercommuting odd elements that square to~1, and the \defn{Hecke--Clifford algebra} $\cH_n$ is the semi-direct product algebra $\fS_n \ltimes \Cl_n$. The $\fS_n$ action and $\alpha_i$'s described above endow $V^{\otimes n}$ with the structure of an $\cH_n$-module. The simple $\cH_n$-modules are indexed by strict partitions of $n$ (i.e., partitions with no repeated parts). Given a simple $\cH_n$-module $L_{\lambda}$, we let $\bT_{\lambda}(V)=V^{\otimes n} \otimes_{\cH_n} L_{\lambda}$. If $\dim(V)<\ell(\lambda)$ then this space is~0. If $\dim(V) \ge \ell(\lambda)$ then $\bT_{\lambda}(V)$ is an irreducible polynomial representation of $\fq(V)$. Moreover, any irreducible polynomial representation is isomorphic to $\bT_{\lambda}(V)$ for a unique $\lambda$ with $\ell(\lambda) \le \dim(V)$. The construction $\bT_{\lambda}(V)$ is functorial in $V$ (with respect to maps of isomeric vector spaces), and $\bT_{\lambda}$ can be seen as a isomeric analog of a Schur functor.

If $V$ is an irreducible finite dimensional representation of a Lie superalgebra $\fg$ then $\End_{\fg}(V)$ is either one-dimensional (``type M'') or two-dimensional (``type Q''); in the latter case, the endomorphism ring is generated by a isomeric structure. The irreducible $\bT_{\lambda}(V)$ is type~M if $\ell(\lambda)$ is even and type~Q if $\ell(\lambda)$ is odd; in fact, this continues to hold if $V$ is infinite dimensional.

If $(V,\alpha)$ and $(W,\beta)$ are two isomeric vector spaces then $\alpha \otimes \beta$ is an even automorphism of $V \otimes W$ squaring to $-1$. The \defn{half tensor product} of $V$ and $W$, denoted $2^{-1}(V \otimes W)$, is the $\zeta_4$-eigenspace of $\alpha \otimes \beta$ on $V \otimes W$, where $\zeta_4$ is a fixed square root of $-1$. If $\fg$ and $\fh$ are Lie superalgebras and $V$ and $W$ finite dimensional irreducible representations then $V \otimes W$ is an irreducible representation of $\fg \times \fh$ if at least one of $V$ or $W$ has type~M; if $V$ and $W$ both have type~Q then $2^{-1}(V \otimes W)$ is an irreducible representation of $\fg \times \fh$ of type~M (see \cite[\S 3.1.3]{chengwang}). This also holds for polynomial representations of $\fq(V)$ in the infinite dimensional case.

\subsection{The ring $A$}

Let $(V,\alpha)$ and $(W,\beta)$ be isomeric vector spaces, and let $U$ be their half tensor product. We let $A=\Sym(U)$, which we regard as an algebra object in $\Rep^{\pol}(\fq(V) \times \fq(W))$. We are most interested in the case where $V$ and $W$ are infinite dimensional, though we also make use of the finite dimensional case. At times we treat $A$ as a functor of $(V,W)$. We have an analog of the Cauchy decomposition for $A$:
\begin{displaymath}
A = \bigoplus_{\lambda} 2^{-\delta(\lambda)} (\bT_{\lambda}(V) \otimes \bT_{\lambda}(W)),
\end{displaymath}
where the sum is over all strict partitions $\lambda$. We let $A_{\lambda}$ be the $\lambda$ summand in the above expression. This is an irreducible $\fq(V) \times \fq(W)$ representation, and if $\lambda \ne \mu$ then $A_{\lambda}$ and $A_{\mu}$ are non-isomorphic. Thus $A=\bigoplus A_{\lambda}$ is multiplicity free. As explained in \cite[\S 2.8]{isomeric}, $A$ is a polynomial superalgebra in even variables $x_{i,j}$ and odd variables $y_{i,j}$, with $i,j \ge 1$.

We let $I_{\lambda}$ be the ideal of $A$ generated by $A_{\lambda}$. By \cite[Theorem~1.2]{isomeric}, we have
\begin{displaymath}
I_{\lambda}=\bigoplus_{\lambda \subset \mu} A_{\mu}.
\end{displaymath}
In particular, we see that $I_{\lambda} \subset I_{\mu}$ if and only if $\mu \subset \lambda$.

We let $\Spec(A)$ be the spectrum of the ring $A$. This space has the structure of a superscheme. If $T$ is a superalgebra then a $T$-point of $\Spec(A)$ is a $T$-linear map
\begin{displaymath}
\langle , \rangle \colon V_T \otimes_T W_T \to T
\end{displaymath}
(where $V_T=T \otimes V$) of degree~0 that is compatible with the isomeric structures, in the sense that
\begin{displaymath}
\langle \alpha(v), \beta(w) \rangle = (-1)^{\vert v \vert} \zeta_4 \langle v, w \rangle,
\end{displaymath}
where $\vert v \vert \in \bZ/2$ denotes the degree of the homogeneous element $v \in V$. (We refer to such a map as a \defn{isomeric pairing}.) As a topological space, $\Spec(A)$ coincides with the spectrum of the reduced ring $A_{\rm red}$, which is an ordinary commutative ring. From the above, we see that a $\bC$-point of $\Spec(A)$ is a isomeric pairing $V \times W \to \bC$. Since the pairing is even, $V_i$ and $W_{i+1}$ must pair to~0. Furthermore, compatibility with the isomeric structures implies that the pairing is determined by its restriction to $V_0 \times W_0$. We thus see that $\Spec(A)$ is identified with $\Spec(\Sym(V_0 \otimes W_0))$ as a topological space. The isomeric supergroup $\bQ(V) \times \bQ(W)$ does not act on the latter space, but its even subgroup $\GL(V_0) \times \GL(W_0)$ does, and the identification is compatible with this action. We let $\Spec(A)_{\le r}$ be the locus consisting of points of rank $\le r$ (meaning the pairing $V_0 \times W_0 \to \bC$ has rank $\le r$). We also put $\Spec(A)_{\le \infty}=\Spec(A)$.

\subsection{Isomeric determinantal ideals}

Let $\sigma(r)$ be the ``staircase partition'' with $r$ rows, i.e., $(r, r-1, \ldots, 1)$, and let $I_r=I_{\sigma(r+1)}$. (We also put $I_{\infty}=0$.) We refer to $I_r$ as the \defn{isomeric determinantal ideal} of rank $r$. From the general decomposition of $I_{\lambda}$, we find
\begin{displaymath}
A/I_r = \bigoplus_{\ell(\lambda) \le r} A_{\lambda}.
\end{displaymath}
This decomposition will be important to what follows.

Fix $r<\infty$. We now study the ideal $I_r$ in more detail. Let $E$ be a isomeric space of dimension $r|r$. Put
\begin{displaymath}
B = \Sym(2^{-1}(V \otimes E) \oplus 2^{-1}(W \otimes E^*)).
\end{displaymath}
This is a superalgebra on which $\fq(V) \times \fq(W) \times \fq(E)$ acts.

\begin{proposition}
The invariant space $B^{\fq(E)}$ is (not necessarily naturally) isomorphic to $A/I_r$ as a representation of $\fq(V) \times \fq(W)$.
\end{proposition}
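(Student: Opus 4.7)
The plan is to compute both sides of the claimed isomorphism explicitly as $\fq(V) \times \fq(W)$-representations via a Cauchy-type isotypic decomposition of $B$, and then match them summand by summand with the decomposition of $A/I_r$ recalled at the start of this subsection.

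First, I would apply the isomeric analog of Cauchy's identity (see \cite[\S 3]{chengwang}) to each tensor factor of $B$:
\begin{displaymath}
\Sym(2^{-1}(V \otimes E)) \cong \bigoplus_{\lambda} 2^{-\delta(\lambda)}\bigl(\bT_{\lambda}(V) \otimes \bT_{\lambda}(E)\bigr),
\end{displaymath}
and similarly for $\Sym(2^{-1}(W \otimes E^{*}))$ with $\mu$ and $E^{*}$ in place of $\lambda$ and $E$. In each case the sum runs over strict partitions of length at most $r$, since $\bT_{\lambda}(E) = 0$ once $\ell(\lambda) > \dim(E) = r$. Tensoring the two decompositions and regrouping gives
\begin{displaymath}
B \cong \bigoplus_{\lambda, \mu} \bigl[2^{-\delta(\lambda) - \delta(\mu)}(\bT_{\lambda}(V) \otimes \bT_{\mu}(W))\bigr] \otimes \bigl[\bT_{\lambda}(E) \otimes \bT_{\mu}(E^{*})\bigr]
\end{displaymath}
as a representation of $\fq(V) \times \fq(W) \times \fq(E)$.

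Second, I would take $\fq(E)$-invariants summand by summand. Using the standard identification $\bT_{\mu}(E^{*}) \cong \bT_{\mu}(E)^{*}$ for polynomial representations of the isomeric algebra, the multiplicity space becomes
\begin{displaymath}
\bigl[\bT_{\lambda}(E) \otimes \bT_{\mu}(E)^{*}\bigr]^{\fq(E)} = \Hom_{\fq(E)}\bigl(\bT_{\mu}(E), \bT_{\lambda}(E)\bigr),
\end{displaymath}
which by the Schur-type lemma for $\fq$ vanishes unless $\lambda = \mu$, and otherwise equals $\End_{\fq(E)}(\bT_{\lambda}(E))$, of dimension $1$ in the type~M case ($\ell(\lambda)$ even) and dimension $2$ in the type~Q case ($\ell(\lambda)$ odd). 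Only the diagonal terms $\lambda = \mu$ contribute.

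Third, I would collapse the arithmetic. For each surviving $\lambda$, the scalar factor $2^{-2\delta(\lambda)}$ coming from the half tensor products combines with the endomorphism-space dimension $d_{\lambda} \in \{1, 2\}$ to yield the normalization $2^{-\delta(\lambda)}$: when $\ell(\lambda)$ is even both numbers are trivial and one recovers $\bT_{\lambda}(V) \otimes \bT_{\lambda}(W) = A_{\lambda}$, while when $\ell(\lambda)$ is odd the factor $2^{-2} \cdot 2 = 2^{-1}$ applied to the type~Q $\otimes$ type~Q representation $\bT_{\lambda}(V) \otimes \bT_{\lambda}(W)$ is precisely the irreducible half tensor product $A_{\lambda}$. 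Summing gives
\begin{displaymath}
B^{\fq(E)} \cong \bigoplus_{\ell(\lambda) \le r} 2^{-\delta(\lambda)}\bigl(\bT_{\lambda}(V) \otimes \bT_{\lambda}(W)\bigr) = \bigoplus_{\ell(\lambda) \le r} A_{\lambda} = A/I_{r},
\end{displaymath}
the desired (non-canonical) isomorphism.

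The main obstacle is the parity bookkeeping: one must verify carefully that the factor $2^{-2\delta(\lambda)} \cdot d_{\lambda}$ matches the normalization $2^{-\delta(\lambda)}$ used to define $A_{\lambda}$, in both the type~M and type~Q cases, and that the Cauchy decomposition and the identification $\bT_{\mu}(E^{*}) \cong \bT_{\mu}(E)^{*}$ hold with the correct type and parity data for the infinite-dimensional $V$ and $W$. Apart from these conventional checks, the argument is a standard combination of isomeric Cauchy duality, self-duality of the $\bT_{\lambda}$ functors up to type, and Schur's lemma for polynomial $\fq(E)$-modules.
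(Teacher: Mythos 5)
Your proof is correct and takes essentially the same route as the paper: both apply the isomeric Cauchy decomposition to each tensor factor of $B$, invoke Schur's lemma to kill the $\fq(E)$-invariants of $\bT_{\lambda}(E) \otimes \bT_{\mu}(E^*)$ for $\lambda \neq \mu$, and identify the surviving diagonal summands with the $A_{\lambda}$ pieces of $A/I_r$. The only difference is cosmetic, in the bookkeeping of the $2^{-\delta(\lambda)}$ normalizations: where you run a multiplicity count against the dimension of $\End_{\fq(E)}(\bT_{\lambda}(E))$, the paper instead regroups the four half-tensor factors using associativity and commutativity of the half tensor product (citing \cite[\S 7]{supermon}) and then observes directly that $2^{-\delta(\lambda)}\bigl(\bT_{\lambda}(E)\otimes\bT_{\lambda}(E^*)\bigr)^{\fq(E)}$ is one-dimensional.
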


\begin{proof}
Appealing to the isomeric analog of the Cauchy decomposition, we have
\begin{align*}
B &= \Sym(2^{-1}(V \otimes E)) \otimes \Sym(2^{-1}(W \otimes E^*)) \\
&= \bigoplus_{\lambda,\mu} 2^{-\ell(\lambda)}(\bT_{\lambda}(V) \otimes \bT_{\lambda}(E)) \otimes 2^{-\ell(\mu)}(\bT_{\mu}(W) \otimes \bT_{\mu}(E^*)).
\end{align*}
The sum is taken over all strict partitions $\lambda$ and $\mu$ with $\le r$ rows. For $\lambda \ne \mu$, the irreducible representations $\bT_{\lambda}(E)$ and $\bT_{\mu}(E)$ are non-isomorphic, and so the $\fq(E)$-invariant space of $\bT_{\lambda}(E) \otimes \bT_{\mu}(E^*)$ vanishes. Since the half tensor product is associative and commutative up to isomorphism (see \cite[\S 7]{supermon}), we have
\begin{displaymath}
B^{\fq(E)} \cong \bigoplus_{\ell(\lambda) \le r} 2^{-\ell(\lambda)}(\bT_{\lambda}(V) \otimes \bT_{\lambda}(W)) \otimes 2^{-\ell(\lambda)}(\bT_{\lambda}(E) \otimes \bT_{\lambda}(E^*))^{\fq(E)}.
\end{displaymath}
The invariant space above is one-dimensional, and so the result follows.
\end{proof}

In particular, we see that the irreducible $U=2^{-1}(V \otimes W)$ appears with multiplicity one in $B^{\fq(E)}$ (assuming $r>0$). Since $U$ has type~M, there is a unique map of $(\fq(V) \times \fq(W))$-representations $U \to B^{\fq(E)}$, up to scaling. Thus, up to this ambiguity, there is a unique equivariant ring homomorphism $A \to B^{\fq(E)}$. Since all representations in $B{^\fq(E)}$ have $\le r$ rows, this map factors through $A/I_r$, and induces a homomorphism $A/I_r \to B^{\fq(E)}$. The above proposition suggests this map might be an isomorphism, which is confirmed by the following proposition.

\begin{proposition} \label{prop:qdet}
We have the following:
\begin{enumerate}
\item $V(I_r) \subset \Spec(A)$ is the rank $\le r$ locus $\Spec(A)_{\le r}$.
\item We have a natural isomorphism $A/I_r \to B^{\fq(E)}$ of $(\fq(V) \times \fq(W))$-algebras.
\end{enumerate}
\end{proposition}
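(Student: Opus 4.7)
The plan is to establish (b) first by a representation-theoretic argument using the previous proposition's multiplicity count, and then deduce (a) by analyzing the geometry of the map $\Spec(B) \to \Spec(A)$.

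For part (b), I would start with the natural $(\fq(V) \times \fq(W))$-equivariant algebra homomorphism $\varphi \colon A \to B^{\fq(E)}$ constructed in the discussion preceding the statement: it is the unique (up to scalar) map sending the generating subrepresentation $U = A_{(1)}$ into the multiplicity-one copy of $U$ inside $B^{\fq(E)}$, and it is induced by the ``trace pairing'' $v \otimes w \mapsto \sum_i (v \otimes e_i)(w \otimes e_i^*)$ (with appropriate half-tensor and sign conventions) for a basis $\{e_i\}$ of $E$. Since $B^{\fq(E)}$ contains no copy of $A_\lambda$ with $\ell(\lambda) > r$, $\varphi$ kills $I_r$ and descends to $\bar\varphi \colon A/I_r \to B^{\fq(E)}$. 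By the previous proposition, both source and target have the same multiplicity-free decomposition $\bigoplus_{\ell(\lambda) \le r} A_\lambda$ as $(\fq(V)\times\fq(W))$-representations, so $\bar\varphi$ is an isomorphism iff it is surjective, and this reduces to showing that $B^{\fq(E)}$ is generated as a super-commutative algebra by its $U$-isotypic component.

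This generation statement is the First Fundamental Theorem of invariant theory for $\fq(E)$ acting on $B$: the $\fq(E)$-invariants in $B$ are generated by the trace pairings displayed above. It is the isomeric analog of Weyl's classical FFT for $\GL(E)$ acting on $\Sym(V \otimes E) \otimes \Sym(W \otimes E^*)$, and should follow from arguments in the same spirit as those used in \cite{isomeric} for the closely related restriction functor appearing as Case~IV in Figure~\ref{fig:cases}; alternatively, it can be derived from a direct Howe-duality/character computation matching the $\fq(E)$-invariants of $B$ against the Cauchy-type decomposition of $B$ already used in the preceding proof. This is the main obstacle in the argument.

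For part (a), I would use (b) to identify $V(I_r) = \Spec(A/I_r)$ topologically with $\Spec(B^{\fq(E)})$, and then analyze the natural map $\pi \colon \Spec(B) \to \Spec(A)$. On reduced $\bC$-points, $\Spec(B)$ parametrizes pairs of linear maps $V_0 \to E_0^*$ and $W_0 \to E_0$ (the odd generators contribute only nilpotents and therefore do not affect the underlying topological space), and $\pi$ sends such a pair to the composed pairing $V_0 \otimes W_0 \to E_0^* \otimes E_0 \to \bC$, which has rank at most $\dim E_0 = r$. Conversely, any rank-$\le r$ pairing factors through some $r$-dimensional subspace and is hence in the image of $\pi$. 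Since the $\fq(E)$-action on $\Spec(B)$ preserves the composed pairing, the induced map $\Spec(B^{\fq(E)}) \to \Spec(A)$ has image exactly $\Spec(A)_{\le r}$, giving $V(I_r) = \Spec(A)_{\le r}$.
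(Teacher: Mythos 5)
Your overall strategy — construct the equivariant map $\bar\varphi \colon A/I_r \to B^{\fq(E)}$, observe source and target are multiplicity-free with the same isotypic decomposition, and conclude $\bar\varphi$ is an isomorphism once one knows it is either injective or surjective — is the right frame. But you then reduce the whole problem to \emph{surjectivity}, i.e.\ to a First Fundamental Theorem for $\fq(E)$ acting on $B$, and you explicitly leave that unproven (``this is the main obstacle''). That is the gap. A character/Howe-duality count, which you offer as an alternative, only shows the two sides have the same decomposition; it does not by itself show the algebra map hits every isotypic piece. So the FFT is doing real work here, and you have not supplied it.

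The paper's proof is arranged precisely to avoid the FFT. It instead proves \emph{injectivity} of $\beta^*$ mod $I_r$, via a commutative diagram involving the isomeric Grassmannian $\QGr_r(V)$ and the scheme $\Isom(\cQ,E)$. The crucial injectivity statement for $\alpha^*$ is reduced to the case $V=W=\bC^{r|r}$, where $\QGr_r(V)$ is a point and $\alpha$ is the identity; the reduction is possible because the kernel of $\alpha^*$ is a $(\fq(V)\times\fq(W))$-subrepresentation all of whose constituents $\bT_\lambda$ have $\le r$ rows, hence survive evaluation on $\bC^{r|r}$. This is a genuinely different route: injectivity-plus-equal-characters rather than surjectivity-plus-equal-characters. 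Also note the paper first treats the finite-dimensional case and then deduces the infinite-dimensional one by evaluating on finite-dimensional subspaces; your proposal does not address this reduction, and some of your point-set arguments (e.g.\ ``any rank-$\le r$ pairing factors through some $r$-dimensional subspace'') are cleanest in finite dimensions.

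Your argument for (a) has a secondary soft spot: you need the image of $\Spec(B^{\fq(E)}) \to \Spec(A)$ to be no bigger than $\Spec(A)_{\le r}$, and you justify this by asserting surjectivity of $\Spec(B)\to\Spec(B^{\fq(E)})$ (``since the $\fq(E)$-action preserves the composed pairing''). That surjectivity is not immediate for a superalgebra quotient. What one actually needs — and what the paper uses — is a density argument: since $A/I_r\hookrightarrow B$ (by (b)), the map $\Spec(B)\to V(I_r)$ is dominant, and its image lies in the closed set $\Spec(A)_{\le r}$, so $V(I_r)\subseteq\Spec(A)_{\le r}$. This is easily repaired, but the FFT gap in (b) is not, and (a) as you have written it depends on (b).
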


\begin{proof}
First suppose that $V$ and $W$ be finite dimensional. Let $\QGr_r(V)$ be the isomeric Grassmannian, parametrizing isomeric quotients of $V$ of dimension $r|r$, and let $\cQ$ be the tautological bundle on it. Let $C=\Sym(\cQ \otimes W)$, regarded as a quasi-coherent sheaf of algebras on $\QGr_r(V)$, and let $\Isom(\cQ,E)$ denote the space of isomorphisms of the isomeric vector bundle $\cQ$ with the trivial isomeric bundle on $E$. Consider the diagram
\begin{displaymath}
\xymatrix@C=4em{
\QGr_r(V) \times \Spec(C) \ar[d]_{\alpha} & \QGr_r(V) \times \Spec(B) \times \Isom(\cQ, E) \ar[l]_-{\gamma} \ar[d]^{\delta} \\
\Spec(A) & \Spec(B) \ar[l]^{\beta} }
\end{displaymath}
To describe the maps, let $T$ be a superalgebra. A $T$-point of $\Gr_r(V) \times \Spec(C)$ consists of a isomeric quotient $V_T \to T^{r|r}$ and a isomeric pairing $T^{r|r} \times W_T \to T$. Composing, we get a isomeric pairing $V_T \times W_T \to T$, which is a $T$-point of $\Spec(A)$. This is the map $\alpha$. A $T$-point of $\Spec(B)$ consists of a isomeric map $W_T \to E_T$ and a isomeric pairing $V_T \times E_T \to T$. Again, the composition gives a isomeric pairing $V_T \times W_T \to T$, which is a $T$-point of $A$, and this is the map $\beta$. The map $\gamma$ is simply the projection map. Finally, a $T$-point of the top right space consists of a isomeric quotient $V_T \to T^{r|r}$, a isomeric pairing $T^{r|r} \times W \to T$, and a isomeric isomorphism $T^{r|r} \to E_T$. Out of this data we can naturally build a $T$-point of $\Spec(B)$, and this is the map $\delta$. It follows from the descriptions of the maps on $T$-points that the diagram commutes.

The map $\beta$ is $\fq(E)$-invariant, and thus coresponds to a ring homomorphism $\beta^* \colon A \to B^{\fq(E)}$. Since every representation of $\fq(V) \times \fq(W)$ appearing in $B^{\fq(E)}$ has $\le r$ rows, it follows that $\beta^*$ factors through $A/I_r$. Thus $\beta \circ \delta=\alpha \circ \gamma$ factors (scheme-theoretically) through $\Spec(A/I_r)$. Since $\gamma$ is simply a projection map, it follows that $\alpha$ similarly factors.

We claim that the map $\alpha^*$ on global functions is injective modulo $I_r$. Since $\alpha$ is equivariant for the actions of the isomeric groups, the kernel of $\alpha^*$ is a $(\fq(V) \times \fq(W))$-ideal of $A/I_r$. Since every representation appearing in $A/I_r$ has $\le r$ rows, it suffices to prove injectivity in the case where $V=W=\bC^{r|r}$. But in this case $\QGr_r(V)$ is a point, $C=A$, and $\alpha$ is the identity map. Thus the claim follows.

It follows from the previous paragraph that $\im(\alpha)$ is Zariski dense in $V(I_r)$. Since the image of $\alpha$ on $\bC$-points is the points of rank $\le r$, claim (a) follows.

The map $\gamma^*$ on functions is clearly injective. Thus $\delta^* \circ \beta^*=\gamma^* \circ \alpha^*$ is injective modulo $I_r$. In particular, we see that $\beta^* \colon A/I_r \to B^{\fq(E)}$ is injective. Since $A/I_r$ and $B^{\fq(E)}$ are isomorphic as representations, and all multiplicity spaces are finite dimensional, this map is necessarily an isomorphism. Thus (b) follows.

The infinite dimensional case follows from the finite dimensional case. Indeed, (a) can be rephrased as saying that $\rad(I_r)$ coincides with some $(\GL(V_0) \times \GL(W_0))$-ideal, and since the algebra is a polynomial representation of $\GL(V_0) \times \GL(W_0)$ we can check such an equality after evaluating on finite dimensional spaces. Similarly, the map in (b) can be checked to be an isomorphism after evaluating on finite dimensional spaces.
\end{proof}

\begin{lemma} \label{lem:q-integral}
Suppose $V$ and $W$ are infinite dimensional. Then the algebra $\Sym(V^{\oplus r} \oplus W^{\oplus s})$ is $(\fq(V) \times \fq(W))$-integral.
\end{lemma}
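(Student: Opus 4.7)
The plan is to apply property \prop{B} for $\Rep^{\rm pol}(\fq \times \fq)$, established in Proposition~\ref{prop: nisomericABUF}, together with a direct identification of the ordinary nilradical of $S = \Sym(U)$, where $U = V^{\oplus r} \oplus W^{\oplus s}$ and $\fg = \fq(V) \times \fq(W)$. Writing $\omega \colon \Rep^{\rm pol}(\fg) \to \SVec$ for the forgetful functor, I will show (i) $\rad(S^\omega)$ is prime, and (ii) $\rad_\fg(0) = 0$. Combining (i) with \prop{B} gives that $\rad_\fg(0)$ is $\fg$-prime; together with (ii) this says $0$ itself is $\fg$-prime.

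For step (i), identify $S^\omega$ with the super-commutative algebra $\Sym(U_0) \otimes \bigwedge(U_1)$. Any element of the ideal $\langle U_1 \rangle$ generated by $U_1$ involves only finitely many odd generators and so is nilpotent, while $S^\omega/\langle U_1 \rangle \cong \Sym(U_0)$ is an ordinary polynomial ring on the infinite-dimensional space $U_0 = V_0^{\oplus r} \oplus W_0^{\oplus s}$, hence a domain. Therefore $\rad(S^\omega) = \langle U_1 \rangle$ and this ideal is prime.

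For step (ii), Proposition~\ref{prop:fiber-rad} yields $\rad_\fg(0)^\omega \subset \rad(S^\omega) = \langle U_1 \rangle$, so $\rad_\fg(0) \subset \lfloor \langle U_1 \rangle \rfloor$; it suffices to prove the latter vanishes, i.e., that no nonzero $\fg$-subrepresentation of $S$ lies in $\langle U_1 \rangle$. To this end, consider the decomposition $S = \bigoplus_{q \geq 0} S^{(q)}$ with $S^{(q)} = \Sym(U_0) \otimes \bigwedge^q(U_1)$. The even part $\fg_0$ preserves this bigrading, while any $E \in \fg_1$ shifts the index by $\pm 1$: odd elements of $\fq(V)$ interchange $V_0$ and $V_1$ by the defining relation $X\alpha + \alpha X = 0$, and similarly for $W$. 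If $X \subset \langle U_1 \rangle = \bigoplus_{q \geq 1} S^{(q)}$ is a hypothetical nonzero $\fg$-subrepresentation, choose $0 \neq f \in X$ whose graded components $f^{(q)}$ vanish below some minimal index $m \geq 1$, and construct odd elements $E_1, \ldots, E_m \in \fg_1$ such that the projection of $E_m \cdots E_1 f$ onto $S^{(0)} = \Sym(U_0)$ is nonzero. This element lies in $X$ by $\fg$-stability but not in $\langle U_1 \rangle$, a contradiction.

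The main obstacle is the inductive construction of the $E_i$'s avoiding cancellation in the leading $(q-1)$-projection at each stage. The key flexibility is the parametrization of $\fq(V)_1$ by arbitrary linear maps $V_0 \to V_1$ (with $X|_{V_1}$ then determined by $X\alpha + \alpha X = 0$), together with the infinite-dimensionality of $V$ and $W$: this provides enough odd endomorphisms in $\fg_1$ to isolate any single odd basis vector appearing in the leading graded component of the current element while annihilating the others, thereby producing a nonvanishing contribution in lower $q$-degree at each iteration.
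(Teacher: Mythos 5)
Your proof is correct in outline, but takes a genuinely different and considerably longer route than the paper's. The paper proves $\fg$-integrality directly: given nonzero $f,g \in S$, the $\fg$-subrepresentation generated by $f$ contains (using the even part $\fgl(V_0)\times\fgl(W_0)$ and infinite dimensionality) an element $f'$ whose variables are disjoint from those of $g$; since the only zero-divisors in a super polynomial ring arise from odd variables squaring to zero, $f'g\neq 0$. That is a two-line argument requiring no machinery. You instead invoke \prop{B} for $\Rep^{\pol}(\fq\times\fq)$ --- which itself rests on the whole apparatus of \S\ref{s:super} --- and then separately prove that $\lfloor \rad(S^\omega)\rfloor=0$. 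Logically this is fine (there is no circularity: Proposition~\ref{prop: nisomericABUF} does not depend on this lemma, and \prop{A}, \prop{B} do pass to admissible subcategories), and it is a nice illustration of the abstract framework, but it is heavier than needed and, as you concede, the key inductive step is only sketched.

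One inaccuracy in the sketch: you say the parametrization of $\fq(V)_1$ by maps $V_0\to V_1$ lets you ``isolate any single odd basis vector ... while annihilating the others.'' That is not quite available, because an odd $E\in\fq(V)_1$ acts \emph{diagonally} on the $r$ copies $V_1^{\oplus r}\subset U_1$: if $E$ sends the basis vector $f_j\in V_1$ to $b\in V_0$, it sends $f_j^{(k)}\mapsto b^{(k)}$ in \emph{every} copy $k$, so you cannot kill $f_j^{(1)}$ while keeping $f_j^{(2)}$. The construction is salvageable by a slightly different device: choose $E$ sending each relevant $f_j$ to a \emph{fresh} even vector $b_j\in V_0$ not occurring in the polynomial coefficients of $f$ (possible by infinite dimensionality); then the $(m{-}1)$-component of $Ef$ is $\sum_{j,k} b_j^{(k)}\,\partial_{f_j^{(k)}}f$, and since the $b_j^{(k)}$ are distinct new variables and not all contractions vanish, the sum cannot cancel. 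With that repair, your step (ii) goes through, and the argument is complete --- but do note the paper's proof gets there much more cheaply.
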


\begin{proof}
Let $f$ and $g$ be non-zero elements of the algebra. By picking bases, we can identify this algebra with a super polynomial ring. All zero divisors in this ring stem from odd degree variables squaring to~0. Since $V$ and $W$ are infinite dimensional, the representation generated by $f$ will contain a non-zero element $f'$ that has no variables in common with $g$. Thus $f' g \ne 0$, which establishes the result.
\end{proof}

\begin{remark} \label{rmk:Aintegral}
A similar argument shows that $A$ itself is $(\fq(V) \times \fq(W))$-integral.
\end{remark}

\begin{proposition} \label{prop:qdet-prime}
The ideal $I_r$ is $(\fq(V) \times \fq(W))$-prime.
\end{proposition}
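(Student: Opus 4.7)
My plan is to show that $A/I_r$ is integral in the tensor category $\Rep^{\pol}(\fq(V) \times \fq(W))$, which is equivalent to $I_r$ being $(\fq(V) \times \fq(W))$-prime. By Proposition~\ref{prop:qdet}(b) there is a $(\fq(V) \times \fq(W))$-equivariant algebra isomorphism $A/I_r \cong B^{\fq(E)}$, and $B^{\fq(E)}$ is a $(\fq(V) \times \fq(W))$-stable subalgebra of $B$ (the $\fq(V) \times \fq(W)$-action commutes with the $\fq(E)$-action, hence preserves invariants). Categorical integrality descends to equivariant subalgebras: if $X, Y \in [B^{\fq(E)}]$ are nonzero with $XY = 0$ in $B^{\fq(E)}$, then they are also nonzero subobjects of $B$ with zero product there. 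It therefore suffices to prove that $B$ itself is $(\fq(V) \times \fq(W))$-integral.

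Since $E$ has finite isomeric dimension $r|r$, the half tensor product $2^{-1}(V \otimes E)$, viewed purely as a $\fq(V)$-module, is a finite direct sum of copies of $V$ and $V[1]$; similarly $2^{-1}(W \otimes E^*)$ is a finite direct sum of copies of $W$ and $W[1]$ as a $\fq(W)$-module. Consequently, as a $(\fq(V) \times \fq(W))$-algebra,
\begin{displaymath}
B \cong \Sym\bigl(V^{\oplus a} \oplus V[1]^{\oplus b} \oplus W^{\oplus c} \oplus W[1]^{\oplus d}\bigr)
\end{displaymath}
for suitable integers $a,b,c,d \ge 0$. The argument that proves Lemma~\ref{lem:q-integral} now applies essentially verbatim: choosing bases identifies $B$ with a super polynomial ring; all zero divisors stem from odd variables squaring to zero; and since $V$ and $W$ are infinite dimensional, given any nonzero elements $f, g \in B$ one can find inside the $(\fq(V) \times \fq(W))$-subrepresentation generated by $f$ an element $f'$ whose variables are disjoint from those of $g$, so $f'g \ne 0$. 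Applied to a hypothetical pair of nonzero subobjects $X, Y$ with $XY = 0$, choosing $f \in X$, $g \in Y$ nonzero and producing such an $f' \in X$ gives $f'g \in XY = 0$, a contradiction.

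The only step that is not already in the paper is the $\fq(V)$-module decomposition of $2^{-1}(V \otimes E)$ as a finite direct sum of parity-shifted copies of the standard representation, which is a routine consequence of the finite-dimensionality of $E$ and the definition of the half tensor product. I expect this piece of bookkeeping to be the main — but entirely mild — obstacle; everything else (the reduction via Proposition~\ref{prop:qdet}(b), passage to the subalgebra $B^{\fq(E)}$, and the extension of the argument of Lemma~\ref{lem:q-integral} to accommodate $[1]$-shifts) is formal.
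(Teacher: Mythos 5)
Your proposal is correct and follows the same overall strategy as the paper: reduce to showing $A/I_r$ is $(\fq(V)\times\fq(W))$-integral, pass through Proposition~\ref{prop:qdet}(b) to $B^{\fq(E)}$, descend integrality from $B$ to its equivariant subalgebra, and then settle integrality of $B$ by a Lemma~\ref{lem:q-integral}-style argument. The one step where you diverge is in how you bring $B$ within reach of that lemma. The paper sidesteps the internal structure of the half tensor products entirely by observing that $B$ embeds as a $(\fq(V)\times\fq(W))$-subalgebra of the larger algebra $\Sym(V\otimes E\oplus W\otimes E^*)$, which (since $V[1]\cong V$ and $W[1]\cong W$ as $\fq$-modules, the standard representation being of type~Q) is literally of the form $\Sym(V^{\oplus m}\oplus W^{\oplus n})$ covered by Lemma~\ref{lem:q-integral}; integrality then passes down the chain of subalgebras. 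You instead decompose $2^{-1}(V\otimes E)$ and $2^{-1}(W\otimes E^*)$ directly into parity-shifted copies of the standard representations and re-run the argument of the lemma with the shifts present. This works — a $\fq(V)$-submodule of $V\otimes E$ is indeed a finite sum of copies of $V$ and $V[1]$ by semisimplicity, and the variables-disjointness argument is insensitive to parity shifts — but note that invoking $V\cong V[1]$ would have let you collapse your $\Sym(V^{\oplus a}\oplus V[1]^{\oplus b}\oplus\cdots)$ to the exact shape of the lemma and avoid re-proving anything. The paper's embedding trick is marginally more economical since it never needs the explicit decomposition of the half tensor product; otherwise the two arguments are the same.
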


\begin{proof}
By Lemma~\ref{lem:q-integral} the algebra $B$ is $(\fq(V) \times \fq(W))$-integral; indeed, note that $B$ is a subalgebra of $\Sym(V \otimes E \oplus W \otimes E^*)$, which has the form considered in the lemma. It follows that the subalgebra $B^{\fq(E)}$ is also $(\fq(V) \times \fq(W))$-integral. By Proposition~\ref{prop:qdet}, we see that $A/I_r$ is $(\fq(V) \times \fq(W))$-integral, and so $I_r$ is $(\fq(V) \times \fq(W))$-prime.
\end{proof}

\subsection{Classification of primes}

We now come to our main result. We suppose that $V$ and $W$ are infinite dimensional, and put $\fg=\fq(V) \times \fq(W)$ for brevity.

\begin{theorem}
We have the following:
\begin{enumerate}
\item The ideal $I_r$ is $\fg$-prime for all $0 \le r \le \infty$.
\item Every $\fg$-prime of $A$ is one of the $I_r$, for $0 \le r \le \infty$.
\item Every $\fg$-radical ideal of $A$ is $\fg$-prime, and thus also one of the $I_r$.
\end{enumerate}
\end{theorem}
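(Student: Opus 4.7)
The plan is to use properties \prop{A} and \prop{B} of the forgetful functor $\Rep^{\alg}(\fq^2) \to \SVec$, established in Proposition~\ref{prop: nisomericABUF}, as a bridge between $\fg$-equivariant ideal theory on $A$ and the classical commutative algebra of the polynomial ring $\Sym(V_0 \otimes W_0)$. Part (a) is already complete: the case $0 \le r < \infty$ is Proposition~\ref{prop:qdet-prime}, and the case $r=\infty$, which asserts that $A$ is $\fg$-integral, is Remark~\ref{rmk:Aintegral}.

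For (b) and (c), let $\fa$ be any $\fg$-radical ideal of $A$. First I would pass to the ordinary radical $\rad(\fa^{\omega})$ of $\fa^{\omega}$ in the super commutative ring $A^{\omega}$. Since every odd variable $y_{ij}$ is nilpotent, $\rad(\fa^{\omega})$ automatically contains the ideal they generate, and therefore descends to a radical ideal of $A^{\omega}/(y_{ij}) \cong \Sym(V_0 \otimes W_0)$. Because $\fg$ acts on $A$ by super derivations in characteristic zero, $\rad(\fa^{\omega})$ is stable under all of $\fg$; in particular its image is a $(\fgl(V_0) \times \fgl(W_0))$-stable radical ideal of $\Sym(V_0 \otimes W_0)$.

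Next, I would invoke the classical fact that the $(\GL(V_0) \times \GL(W_0))$-stable radical ideals of $\Sym(V_0 \otimes W_0)$ are exactly the rank $\le r$ determinantal ideals $J_r$ for $0 \le r \le \infty$, all of which are prime and totally ordered by inclusion. By Proposition~\ref{prop:qdet}(a), $V(I_r) \subset \Spec(A)$ is the rank $\le r$ locus $V(J_r)$, so $\rad(I_r^{\omega}) = \rad(J_r A^{\omega})$. Hence $\rad(\fa^{\omega}) = \rad(I_r^{\omega})$ for some $r$. Property \prop{A} upgrades this equality to $\rad_{\fg}(\fa) = \rad_{\fg}(I_r)$, and since both $\fa$ and $I_r$ are $\fg$-radical we conclude $\fa = I_r$. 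Specializing $\fa$ to any $\fg$-prime gives (b); the same equality shows that every $\fg$-radical ideal of $A$ equals some $I_r$ and is therefore $\fg$-prime by (a), which is (c).

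The principal obstacle is justifying the infinite-rank determinantal classification of $(\GL(V_0) \times \GL(W_0))$-stable radical ideals of $\Sym(V_0 \otimes W_0)$. This can either be cited from \cite{sym2noeth} or reduced to the well-known finite-dimensional statement by the kind of stability argument appearing in the last paragraph of the proof of Proposition~\ref{prop:qdet}. A minor but necessary bookkeeping point is that a super derivation in characteristic zero preserves ordinary radicals, so that the $\fg$-stability of $\fa^{\omega}$ is inherited by $\rad(\fa^{\omega})$; this is standard.
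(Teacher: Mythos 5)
Your proposal is correct and follows essentially the same route as the paper. Both arguments reduce to the same two ingredients: the classical fact that the $(\GL(V_0)\times\GL(W_0))$-stable radical ideals of $\Sym(V_0\otimes W_0)$ (equivalently, the stable closed subsets of $\Spec(A)$) are exactly the rank loci, and property \prop{A}, which upgrades an equality of ordinary radicals to an equality of $\fg$-radicals. The paper phrases the first step topologically, via $V(\fp)$ as a closed subset of $\Spec(A)$; you phrase it ideal-theoretically, via $\rad(\fa^\omega)$ and its image in the even reduction. These are the same thing. A small organizational difference is that you handle (b) and (c) at once by taking $\fa$ to be any $\fg$-radical ideal, whereas the paper proves (b) for $\fg$-primes and then deduces (c) from the total ordering; your variant is marginally more economical.

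One point to tighten: you invoke the claim that a super derivation in characteristic zero preserves ordinary radicals to conclude that $\rad(\fa^\omega)$ is $\fg$-stable. This is more than you need and not as obviously true as you suggest for \emph{odd} derivations $D$ acting on odd elements $x$ (there $Dx$ is even, and the identity $D(x^2)=2x\,Dx=0$ does not immediately give nilpotence of $Dx$). What is actually needed is $\GL(V_0)\times\GL(W_0)$-stability of $\rad(\fa^\omega)$, and this is immediate: the group acts by ring automorphisms, and ring automorphisms carry radicals to radicals. The $\fg$-stability of $\fa$ gives $\fg_0$-stability, which integrates to group-stability in the polynomial category, and then radicals of group-stable ideals are group-stable. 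This is the route the paper implicitly takes.
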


\begin{proof}
(a) We have already proved this (Proposition~\ref{prop:qdet-prime} for $r<\infty$, and Remark~\ref{rmk:Aintegral} for $r=\infty$).

(b) Suppose now that $\fp$ is some $\fg$-prime of $A$. Since $\fp$ is stable by the group $\GL(V_0) \times \GL(W_0)$, so is the locus $V(\fp)$. It follows from basic linear algebra that the only $\GL(V_0) \times \GL(W_0)$ stable closed subsets of $\Spec(A)$ are the rank loci $\Spec(A)_{\le r}$. We thus see that $V(\fp)=V(I_r)$ for some $r$. Since $\Rep^{\pol}(\fg) \to \SVec$ satisfies \prop{A} (Proposition~\ref{prop: nisomericABUF}, note that \prop{A} passes to subcategories), we see that $\rad_{\fg}(\fp)=\rad_{\fg}(I_r)$. But since $\fp$ and $I_r$ are $\fg$-prime, they are equal to their $\fg$-radicals, and so $\fp=I_r$.

(c) A $\fg$-radical ideal is an intersection of $\fg$-prime ideals. Since the $\fg$-primes are totally ordered under inclusion by (b), the claim follows.
\end{proof}

\end{document}